\newtheorem{theorem}{Theorem}[section]
\newtheorem{lemma}[theorem]{Lemma}
\newtheorem{proposition}[theorem]{Proposition}
\newtheorem{corollary}[theorem]{Corollary}
\DeclareMathOperator{\Image}{Im}
\DeclareMathOperator{\Ker}{Ker}
\DeclareMathOperator{\res}{res}
\DeclareMathOperator{\infl}{inf}
\DeclareMathOperator{\tr}{tr}
\DeclareMathOperator{\Opext}{Opext}
\DeclareMathOperator{\Der}{Der}
\DeclareMathOperator{\Inn}{Inn}
\DeclareMathOperator{\Ext}{Ext}
\newcommand{\La}{\mathfrak{a}}
\newcommand{\Lb}{\mathfrak{b}}
\newcommand{\Lc}{\mathfrak{c}}
\newcommand{\Le}{\mathfrak{e}}
\newcommand{\Lg}{\mathfrak{g}}
\newcommand{\Lh}{\mathfrak{h}}
\newcommand{\Ll}{\mathfrak{l}}
\newcommand{\Lm}{\mathfrak{m}}
\newcommand{\Ln}{\mathfrak{n}}
\newcommand{\Lp}{\mathfrak{p}}
\newcommand{\Lq}{\mathfrak{q}}
\begin{document}
\title[Seven-term Exact Sequence for Lie Algebras]{An explicit seven-term exact sequence for the cohomology of a Lie algebra extension}

\author[K. Dekimpe]{Karel Dekimpe}
\address{Katholieke Universiteit Leuven\\
Campus Kortrijk\\
8500 Kortrijk\\
Belgium}
\email{karel.dekimpe@kuleuven-kortrijk.be}
\email{sarah.wauters@kuleuven-kortrijk.be}

\author[M. Hartl]{Manfred Hartl}
\author[S. Wauters]{Sarah Wauters}
\address{Univ. Lille Nord de France\\ F-59000 Lille\\ France \newline UVHC, LAMAV and FR CNRS 2956\\ F-59313 Valenciennes\\ France}
\email{manfred.hartl@univ-valenciennes.fr}

\thanks{S.\ Wauters is supported by a Ph.~D.~fellowship of the Research Foundation - Flanders (FWO)}
\thanks{Research supported by the Research Fund K.U.Leuven}

\begin{abstract}
We construct a seven-term exact sequence involving low degree cohomology spaces of a Lie algebra $\Lg$, an ideal $\Lh$ of $\Lg$ and the quotient $\Lg / \Lh$ with coefficients in a $\Lg$-module. The existence of such a sequence follows from the Hochschild-Serre spectral sequence associated to the Lie algebra extension. However, some of the maps occurring in this induced sequence are not always explicitly known or easy to describe. In this article, we give alternative maps that yield an exact sequence of the same form, making use of the interpretations of the low-dimensional cohomology spaces in terms of derivations, extensions etc. The maps are constructed using elementary methods. Although we don't know whether the new maps coincide with the ones induced by the spectral sequence, the alternative sequence can certainly be useful, especially since we include straight-forward cocycle descriptions of the constructed maps. 
\end{abstract}

\maketitle
\section{Introduction}
In \cite{hs53-2}, Hochschild and Serre introduced a spectral sequence connecting the cohomology spaces of the Lie algebras in the short exact sequence 
\[\xymatrix{0 \ar[r] & \Lh \ar[r] & \Lg \ar[r] & \Lg / \Lh \ar[r] & 0}\] with coefficients in a $\Lg$-module $M$. This spectral sequence induces a seven-term low degree sequence
\begin{equation}\label{the_sequence}\xymatrix{0 \ar[r] & H^1(\Lg/\Lh,M^{\Lh}) \ar[r]^-{\infl} & H^1(\Lg,M) \ar[r]^-{\res} & H^1(\Lh,M)^{\Lg/\Lh} \ar[r] & H^2(\Lg/\Lh,M^{\Lh}) }\end{equation}
\[\xymatrix{\mbox{} \ar[r]^-{\infl} & H^2(\Lg,M)_1 \ar[r] & H^1(\Lg/\Lh,H^1(\Lh,M)) \ar[r] & H^3(\Lg/\Lh,M^{\Lh})}\] that is exact. However, since the construction of the spectral sequence is complicated in general, it is not always easy to describe the maps occuring in the low-degree sequence. 

Following the work we have done in \cite{dhw11-1} for groups, we will use low-dimensional interpretations of the cohomology spaces to construct maps $\tr: H^1(\Lh,M)^{\Lg/\Lh}\to H^2(\Lg/\Lh,M^{\Lh})$, $\rho:H^2(\Lg,M)_1 \to H^1(\Lg/\Lh,H^1(\Lh,M))$ and $\lambda: H^1(\Lg/\Lh,H^1(\Lh,M))\to H^3(\Lg/\Lh,M^{\Lh})$ that will fit into an exact sequence of the form (\ref{the_sequence}). For the construction, we use elementary concepts from the theory of Lie algebras, such as quotients, semi-direct products etc. We prove exactness of the new sequence without using spectral sequence arguments. Although we don't know whether the maps $\tr$, $\rho$ and $\lambda$ coincide with the maps induced by the spectral sequence, it is clear that these explicit maps can be very useful. In addition, we give a description of the maps on cocycle level in Section \ref{cocycle}. 

In Section \ref{huebschmann_section}, we introduce crossed complexes and crossed extensions, as in \cite{hueb80-1}, in the Lie algebra setting. We then go on to define free crossed complexes, morphisms of crossed complexes and homotopy of such morphisms. We use these objects to give a characterisation of the trivial element in $H^3(\Lg, \Lh, M^{\Lh})$ that is needed when we prove that the sequence is exact at $H^1(\Lg/\Lh, H^1(\Lh,M))$ (see Lemma \ref{Huebschmann} in Section \ref{lambda}). For groups, this characterisation has been proved by Huebschmann in \cite{hueb80-1}. We think that Section \ref{huebschmann_section} can be interesting in its own right.  

There is a clear correspondence with the group case treated in \cite{dhw11-1}. However, for groups we chose to work with an interpretation of the first cohomology group in terms of semi-direct complements, while in this article, we work with derivations. It is not difficult though to restate the approach for groups in terms of derivations.

In Section \ref{der_sec_sdc}, we give a survey of the interpretations of the first and second cohomology group. Section \ref{pb_po} treats the notions of pull-back and push-out construction, two notions that will be used throughout the article. The next section deals with some homological algebra that will be useful to prove that the maps we constructed are group homomorphisms. The maps $\tr$ and $\rho$ are introduced in Sections \ref{tr} and \ref{second_map} respectively. Section \ref{third_cohom} gives an interpretation of the third cohomology group, that is used in Section \ref{lambda} to construct the map $\lambda$. The next section proves that the three maps are natural, and in Section \ref{cocycle} we give cocycle descriptions of the new maps. We then treat an example in Section \ref{heisenberg_example}, to finish with the proof of Lemma \ref{Huebschmann} in Section \ref{huebschmann_section}.

\section{Interpretations of the first and second cohomology group}\label{der_sec_sdc}
We recall the interpretations for the first and second cohomology of a Lie algebra $\Lg$ over a field $k$ with coefficients in a $\Lg$-module $M$. 
These interpretations are standard and can be found in many textbooks, such as \cite{weib94-1}. 
An interpretation for the third cohomology group will be given in Section \ref{third_cohom}.

First recall that a $\Lg$-module $M$ is a $k$-module equipped with a $k$-bilinear map $\Lg \otimes_k M \to M$, mapping $x \otimes m$ to $x \cdot m$ and satisfying the relation $[x,y]\cdot m = x \cdot(y \cdot m)-y \cdot (x \cdot m)$ for all $x, \, y \in \Lg$ and $m \in M$.
If $M$ is a $\Lg$-module, a \emph{derivation} $d : \Lg \rightarrow M$ is a $k$-linear map satisfying $d[x,y]= x \cdot d(y) - y \cdot d(x)$ for all $x, \, y\in \Lg$, where $\cdot$ denotes the action of $\Lg$ on $M$. The set of all derivations from $\Lg$ to $M$ is called $\Der (\Lg , M)$. For every $m \in M$, we can define a derivation $d_m : x \mapsto x \cdot m$, and sometimes we use the notation $d_m = \delta (m) $. This is what we call an \emph{inner derivation}, and the set of all inner derivations from $\Lg$ to $M$ is denoted by $\Inn (\Lg,M)$. 
It is a well-known result that $H^1(\Lg,M)$ is isomorphic to $\Der (\Lg,M) / \Inn (\Lg,M)$. One can prove this by abstract methods, or by observing that $\Der (\Lg, M)$ are exactly the 1-cocycles and $\Inn (\Lg, M)$ are exactly the 1-coboundaries arising from the Chevalley-Eilenberg complex of $\Lg$ with coefficients in $M$. 
Like in the group case, there is a correspondence between derivations and splittings  of the standard split extension. If $M$ is a $\Lg$-module, we can construct the standard split extension
\[\xymatrix{\underline{e}_0: &0 \ar[r] & M \ar[r]^-{i_0} & M \rtimes \Lg \ar[r]^-{p_0} & \Lg \ar[r] & 0.}\] A \emph{splitting} $s$ of $\underline{e}_0$ is a Lie algebra map $s : \Lg \rightarrow M \rtimes \Lg$ such that $p_0 \circ s = \mathbb{1}_{\Lg}$. It is clear that for every splitting $s$, there exists a map $d: \Lg \rightarrow M$ such that $s(x)=(d(x),x)$. It follows from the definition of the Lie bracket in the semi-direct product that the fact that $s$ is a Lie algebra homomorphism is equivalent to $d$ being a derivation. This means that there is a one-to-one correspondence between splittings of the standard split extension of $\Lg$ by $M$ and derivations $d : \Lg \rightarrow M$. 
In a large part of this article, we will deal with the cohomology of a Lie algebra $\Lh$ that is an ideal of a Lie algebra $\Lg$. In this case, there is a Lie algebra action of $\Lg / \Lh$ on the cohomology spaces of $\Lh$. It is well-known that the action of $\Lg / \Lh$ on $H^1(\Lh, M)$ is induced by the action of $\Lg$ on $\Der (\Lh,M)$, defined as $({}^{x}d) (z)=x \cdot d(z)-d[x,z]$ for all $x \in \Lg $ and $z \in \Lh$. 

Let $\Lg$ be a Lie algebra and $M$ an abelian Lie algebra. A \emph{Lie algebra extension} of $\Lg$ by $M$ is an exact sequence of Lie algebras
\begin{equation}\label{extension_def}\underline{e}: \xymatrix{0 \ar[r] & M \ar[r]^i & \Le \ar[r]^{p} & \Lg \ar[r] & 0.}\end{equation}
The Lie bracket in $\Le$ induces a Lie algebra action of $\Lg$ on $M$, defined by $i(x \cdot m) = [\widetilde{x},i(m)]$ for $m \in M$ and $x \in \Lg$ with $\widetilde{x}$ a pre-image of $x$ under $p$. The action is well-defined since $M$ is abelian. If we take any $\Lg$-module $M$, we define $\Ext (\Lg , M)$ to be the class of Lie algebra extensions of $\Lg$ by $M$ such that action of $\Lg$ on $M$ induced by the extension coincides with the module structure. Two extensions $\underline{e}$ and $\underline{e}'$ of $\Lg$ by $M$ are \emph{equivalent} if there is a Lie algebra map $h : \Le \rightarrow \Le'$ making the diagram
\[\xymatrix{\underline{e} :  0 \ar[r] & M \ar[r] \ar@{=}[d] & \Le \ar[r] \ar[d] & \Lg \ar[r] \ar@{=}[d] & 0 \\
\underline{e}':   0 \ar[r] & M \ar[r] & \Le' \ar[r] & \Lg \ar[r] & 0}\] commute. It is well-known that $H^2(\Lg,M)$ is isomorphic to $\Ext (\Lg ,M) / \sim$. 
To make the correspondence explicit, let 
\[\xymatrix{0 \ar[r] & M \ar[r]^{i} & \Le \ar[r]^{p} & \Lg \ar[r] & 0}\] be an extension of $\Lg$ by $M$, and choose a section $s : \Lg \rightarrow \Le$, i.e.~a $k$-linear map with $p \circ s = \mathbb{1}_{\Lg}$. Now we define the map $f : \Lg \times \Lg \rightarrow M$, $f(x,y)=[s(x),s(y)]-s[x,y] \in M$ for $x, \, y \in \Lg$.  In a way, $f$ measures the extent in which $s$ is not a Lie algebra map. Observe that $f$ is a bilinear alternating map, giving rise to a linear map $\widetilde{f} : \Lambda^2 \Lg \rightarrow M$. This is exactly the corresponding 2-cocycle coming from the Chevalley-Eilenberg complex. Changing the choice of the section $s$ alters the cocycle with a coboundary. 
Conversely, 
if we are given a 2-cocycle $f : \Lambda^2 \Lg \rightarrow M$, we construct a corresponding extension as follows. Take the vector space $\Le = M \times \Lg$, and make it into a Lie algebra by putting $[(m,x),(m',x')] =(x \cdot m' - x' \cdot m + f(x \wedge x'), [x,x'])$. Since $f$ is a cocycle, this indeed defines a Lie bracket. Furthermore, 
\[\xymatrix{0 \ar[r] & M \ar[r]^{i} & \Le \ar[r]^{p} & \Lg \ar[r] & 0}\] is an extension of $\Lg$ by $M$. If we choose the section $s : \Lg \rightarrow \Le$ mapping $x$ to $(0,x)$, the associated cocycle is indeed $f$.

\section{Pull-back and push-out constructions}\label{pb_po}

We recall two concepts that we will need throughout the article.

\subsection{Pull-back} This is a basic concept from homological algebra that can be found in any textbook on the topic, e.g.~\cite{weib94-1}.
The pull-back $\Lp$ of two Lie algebra homomorphisms $\lambda : \Lb \rightarrow \La$ and $\mu : \Lc \rightarrow \La$ is the sub-algebra of $\Lb \times \Lc$ consisting of all pairs $(x,y) \in \Lb \times \Lc$ with $\lambda(x)=\mu(y)$. There are maps $\overline{\lambda} : \Lp \rightarrow \Lc$ and $\overline{\mu} : \Lp \rightarrow \Lb$ induced by projection, and the following diagram is commutative. 
\[\xymatrix{ \Lp \ar[d]^{\overline{\lambda}} \ar[r]^{\overline{\mu}} & \Lb \ar[d]^{\lambda} \\ \Lc \ar[r]^{\mu} & \La}\]
The pull-back satisfies the following universal property: if $\Lp'$ is a Lie algebra, and $i : \Lp' \rightarrow \Lb$ and $j : \Lp' \rightarrow \Lc$ are Lie algebra homomorphisms such that $\lambda \circ i=\mu \circ j$, then there exists a unique Lie algebra homomorphism $\phi: \Lp' \rightarrow \Lp$ satisfying $\overline{\mu} \circ \phi = i$ and $\overline{\lambda} \circ \phi=j$.

Now let $\xymatrix@1{\underline{e} : 0 \ar[r] & M \ar[r]^{i} & \Le \ar[r]^{\pi} & \Lg \ar[r] & 0}$ be an extension of Lie algebras. If a Lie algebra map $\tau : \Lg' \rightarrow \Lg$ is given, we can take the pull-back $\Lp$ of $\pi$ and $\tau$, and construct a map $\iota : M \rightarrow \Lp$ with $\tau \circ \iota = i$ and $\overline{\pi} \circ \iota$ the trivial map, using the universal property of $\Lp$. Observe that $M$ is a $\Lg'$-module through $\tau$. 
Now the following property holds.
\begin{proposition}\label{pb}
The diagram \[\xymatrix{\underline{e} : 0 \ar[r] & M \ar@{=}[d]\ar[r]^{i} & \Le \ar[r]^{\pi} & \Lg \ar[r] & 0\\\underline{e}': 0 \ar[r] & M \ar[r]^{\iota} & \Lp \ar[u]^{\overline{\tau}} \ar[r]^{\overline{\pi}} & \Lg' \ar[u]^{\tau}\ar[r] & 0}\] is commutative with exact rows. Furthermore, $[\underline{e}']=\tau^*[\underline{e}]$ in $H^2(\Lg', M)$, where $\tau^*$ is the map induced by $\tau$ on cohomology level.
\end{proposition}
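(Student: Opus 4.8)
The plan is to work throughout with the concrete description of the pull-back. Explicitly, $\Lp = \{(e,g') \in \Le \times \Lg' : \pi(e) = \tau(g')\}$, with $\overline{\tau}(e,g') = e$ and $\overline{\pi}(e,g') = g'$ the two projections. The universal property then forces $\iota(m) = (i(m),0)$, since this is the unique element of $\Lp$ satisfying $\overline{\tau}\circ\iota = i$ and $\overline{\pi}\circ\iota = 0$. With these formulas in hand the commutativity of the bottom diagram is immediate: the left square is exactly the defining identity $\overline{\tau}\circ\iota = i$, and the right square $\pi\circ\overline{\tau} = \tau\circ\overline{\pi}$ is the commuting square of the pull-back itself.

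For exactness of the bottom row I would argue by a direct chase. Injectivity of $\iota$ follows from $\overline{\tau}\circ\iota = i$ together with injectivity of $i$. Surjectivity of $\overline{\pi}$ uses surjectivity of $\pi$: given $g'$, pick $e$ with $\pi(e) = \tau(g')$, so that $(e,g') \in \Lp$ maps to $g'$. For exactness in the middle, $\overline{\pi}\circ\iota = 0$ yields one inclusion, and conversely if $(e,g') \in \Ker\overline{\pi}$ then $g' = 0$ and $\pi(e) = \tau(0) = 0$, hence $e = i(m)$ for some $m$ and $(e,g') = \iota(m)$.

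It remains to identify the class of $\underline{e}'$, which I would do by computing a representative $2$-cocycle from a compatible pair of sections. Choose any $k$-linear section $s : \Lg \to \Le$ of $\pi$, producing the cocycle $f(x,y) = [s(x),s(y)] - s[x,y]$ representing $[\underline{e}]$, as recalled in Section~\ref{der_sec_sdc}. Now define $s' : \Lg' \to \Lp$ by $s'(g') = (s(\tau(g')),g')$; this lands in $\Lp$ because $\pi(s(\tau(g'))) = \tau(g')$, and it is a section of $\overline{\pi}$. Since the bracket on $\Lp$ is the restriction of the componentwise bracket on $\Le \times \Lg'$ and $\tau$ is a Lie algebra map, a short computation gives
\[
[s'(x'),s'(y')] - s'[x',y'] = \bigl([s(\tau x'),s(\tau y')] - s[\tau x',\tau y'],\,0\bigr) = \iota\bigl(f(\tau x',\tau y')\bigr),
\]
so the cocycle attached to $\underline{e}'$ via $s'$ is $(x',y') \mapsto f(\tau x',\tau y') = (\tau^* f)(x',y')$. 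As this is precisely the cocycle representing $\tau^*[\underline{e}]$, we conclude $[\underline{e}'] = \tau^*[\underline{e}]$.

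None of these steps is a genuine obstacle; the computation is elementary. The one point deserving care is to check that $\underline{e}'$ really lies in $\Ext(\Lg',M)$ for the \emph{given} $\Lg'$-module structure, namely $M$ regarded as a $\Lg'$-module through $\tau$, so that comparing it with $\tau^*[\underline{e}]$ is legitimate. Using the same preimage $s'(g')$ of $g'$ one finds $[s'(g'),\iota(m)] = (i(\tau(g')\cdot m),0) = \iota(\tau(g')\cdot m)$, which confirms that the action induced by $\underline{e}'$ is $g'\cdot m = \tau(g')\cdot m$, exactly the pull-back module structure through $\tau$.
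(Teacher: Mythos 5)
Your proof is correct and follows the same route the paper merely sketches: direct verification of exactness and commutativity from the concrete description of the pull-back, and identification of $[\underline{e}']$ with $\tau^*[\underline{e}]$ via the cocycle computation with the compatible section $s'(g') = (s(\tau(g')),g')$. The check that the induced $\Lg'$-module structure on $M$ is the one obtained through $\tau$ is a worthwhile addition that the paper leaves implicit.
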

\begin{proof}
It is not difficult to show that the bottom row is exact. Furthermore, commutativity is given by the definition of the pull-back and the construction of $\iota$. The fact that $[\underline{e}']=\tau^*[\underline{e}]$ follows from a cocycle argumentation.
\end{proof}
The following proposition is the converse statement.

\begin{proposition}\label{exact_is_pb}
Let \[\xymatrix@1{\underline{e}: 0 \ar[r] & M \ar[r]^i & \Le \ar[r]^{\pi} & \Lg \ar[r] & 0\\ \underline{e}': 0 \ar[r] & M \ar[r]^j\ar@{=}[u] & \Le' \ar[u]^{\sigma} \ar[r]^p & \Lg' \ar[r] \ar[u]^{\tau} & 0}\] be a commutative diagram with exact rows. Then $[\underline{e}']=\tau^*[\underline{e}]$ in $H^2(\Lg', M)$ (so $\Le'$ is isomorphic to the pull-back of $\tau$ and $\pi$). 
\end{proposition}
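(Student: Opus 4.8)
The plan is to realize $\underline{e}'$ as the pull-back extension built from $\underline{e}$ and $\tau$, and then simply quote Proposition \ref{pb}. Write $\Lp$ for the pull-back of $\pi : \Le \to \Lg$ and $\tau : \Lg' \to \Lg$, and let $\iota : M \to \Lp$, $\overline{\pi} : \Lp \to \Lg'$, $\overline{\tau} : \Lp \to \Le$ be the maps from the construction preceding Proposition \ref{pb}. Recall that the resulting extension $\underline{e}_{\Lp} : 0 \to M \xrightarrow{\iota} \Lp \xrightarrow{\overline{\pi}} \Lg' \to 0$ satisfies $[\underline{e}_{\Lp}] = \tau^*[\underline{e}]$. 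So it suffices to produce an equivalence of extensions $\underline{e}' \simeq \underline{e}_{\Lp}$, and this will simultaneously establish the parenthetical claim that $\Le'$ is isomorphic to the pull-back.

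First I would record what commutativity of the given diagram gives: $\pi \circ \sigma = \tau \circ p$ and $\sigma \circ j = i$ (the latter using that the left vertical map is $\mathbb{1}_M$). The relation $\pi \circ \sigma = \tau \circ p$ is exactly the compatibility needed to invoke the universal property of $\Lp$ for the pair $(\sigma, p)$: there is a unique Lie algebra homomorphism $\phi : \Le' \to \Lp$ with $\overline{\tau} \circ \phi = \sigma$ and $\overline{\pi} \circ \phi = p$. The second of these already says $\phi$ is compatible with the projections to $\Lg'$.

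Next I would check that $\phi$ is a morphism of extensions over $\mathbb{1}_{\Lg'}$ and $\mathbb{1}_M$, i.e.\ that $\phi \circ j = \iota$. Since $\iota$ is characterised (again by the universal property) as the unique map into $\Lp$ with $\overline{\tau} \circ \iota = i$ and $\overline{\pi} \circ \iota = 0$, it is enough to verify $\overline{\tau} \circ (\phi \circ j) = \sigma \circ j = i$ and $\overline{\pi} \circ (\phi \circ j) = p \circ j = 0$; both hold by the relations just recorded together with exactness of the bottom row. Thus $\phi$ fits into a morphism from $\underline{e}'$ to $\underline{e}_{\Lp}$ that is the identity on $M$ and on $\Lg'$, and the short five lemma forces $\phi$ to be an isomorphism. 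Hence $\underline{e}'$ and $\underline{e}_{\Lp}$ are equivalent, so $[\underline{e}'] = [\underline{e}_{\Lp}] = \tau^*[\underline{e}]$.

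The only genuinely delicate point is making sure the statement even typechecks: for $[\underline{e}']$ to be an element of $H^2(\Lg', M)$, the extension $\underline{e}'$ must induce on $M$ precisely the $\Lg'$-module structure obtained from $M$ via $\tau$. I would dispatch this at the outset: given $g' \in \Lg'$, choose $\widetilde{g'} \in \Le'$ with $p(\widetilde{g'}) = g'$; then $\sigma(\widetilde{g'})$ is a $\pi$-preimage of $\tau(g')$, and applying $\sigma$ to $[\widetilde{g'}, j(m)]$ together with $\sigma \circ j = i$ shows $g' \cdot m = \tau(g') \cdot m$. Beyond this the argument is a pure diagram chase, so I expect no real obstacle. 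As an alternative that avoids the five lemma, one can run the cocycle computation alluded to in Proposition \ref{pb}: pick a section $s$ of $\pi$ and a section $s'$ of $p$, write $\sigma \circ s' - s \circ \tau = i \circ c$ for a linear map $c : \Lg' \to M$, and expand $[\sigma s'(x'), \sigma s'(y')] - \sigma s'[x',y']$ using that $i(M)$ is abelian; this yields $f' = \tau^* f + \delta c$, exhibiting the two cocycles as cohomologous directly.
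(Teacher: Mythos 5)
Your proof is correct and follows essentially the same route as the paper's: form the pull-back $\Lp$, use its universal property to get $\phi : \Le' \to \Lp$ with $\overline{\tau}\circ\phi=\sigma$ and $\overline{\pi}\circ\phi=p$, deduce $\phi\circ j=\iota$ from the uniqueness clause, and conclude via the five lemma that $\phi$ is an equivalence of extensions, whence $[\underline{e}']=\tau^*[\underline{e}]$ by Proposition \ref{pb}. The extra remarks on the induced module structure and the alternative cocycle computation are fine but not needed beyond what the paper records.
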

\begin{proof}
Call $\Lp$ the pull-back of $\tau$ and $\pi$. The previous proposition shows that the diagram 
\[\xymatrix{ 0 \ar[r] & M \ar@{=}[d]\ar[r]^{i} & \Le \ar[r]^{\pi} & \Lg \ar[r] & 0\\0 \ar[r] & M \ar[r]^{\iota} & \Lp \ar[u]^{\overline{\tau}} \ar[r]^{\overline{\pi}} & \Lg' \ar[u]^{\tau}\ar[r] & 0}\] is commutative with exact rows. Using the universal property, we can define a map $\phi : \Le' \rightarrow \Lp$ such that $\overline{\tau} \circ \phi = \sigma$ and $\overline{\pi} \circ \phi=p$. It is immediate that the right hand side of the diagram 
\[\xymatrix{ 0 \ar[r] & M \ar@{=}[d]\ar[r]^j & \Le' \ar[d]^{\phi} \ar[r]^{p} & \Lg' \ar[r] & 0\\ 0 \ar[r] & M \ar[r]^{\iota} & \Lp  \ar[r]^{\overline{\pi}} & \Lg' \ar@{=}[u]\ar[r] & 0}\] commutes. Furthermore, post-composing both $\phi \circ j$ and $\iota$ with $\overline{\pi}$ yields the trivial map, and post-composing both maps with $\overline{\tau}$ yields the injection $i$. By the uniqueness in the universal property of the pull-back, $\phi \circ j =\iota$. The five-lemma now implies that $\phi$ is an isomorphism. The previous lemma shows that $[\underline{e}]=\tau^*[\underline{e}']$.
\end{proof}

\subsection{Push-out construction}
For groups, this construction is described in several articles, for example \cite{cw81-1} and \cite{ratc80-1}. We adapt the definition to the case of Lie algebras. We start with a Lie algebra extension $\xymatrix@1{\underline{e}: 0 \ar[r] & M \ar[r]^{\iota} & \Le \ar[r]^{\pi} & \Lg \ar[r] & 0}$. We already know that $\underline{e}$ gives rise to a Lie algebra action of $\Lg$ on $M$, defined by $\iota(x \cdot y) = [\widetilde{x},\iota(y)]$ for all $x\in \Lg$, $y \in M$ and $\pi(\widetilde{x})=x$. Take another $\Lg$-module $M'$ and a $\Lg$-module map $\alpha : M \rightarrow M'$. Now we can form the semi-direct product $M' \rtimes \Le$, where the action of $\Le$ on $M'$ is given through $\pi$. One can check that the set $S = \{(\alpha(x), -\iota(x)) \ | \ x \in M\}$ is an ideal of $M'\rtimes \Le$. Define $\Ll$ as the quotient algebra $(M' \rtimes \Le) \big/ S$. We call $\Ll$ the \emph{push-out construction of $\iota$ and $\alpha$}, and there are maps $\overline{\iota} : M' \rightarrow \Ll$ and $\overline{\alpha} : \Le \rightarrow \Ll$ induced by inclusion, making the diagram
\[\xymatrix{M \ar[r]^{\iota} \ar[d]^{\alpha} & \Le \ar[d]^{\overline{\alpha}}\\ M'  \ar[r]^{\overline{\iota}}  & \Ll }\]
commute. The push-out construction satisfies the following universal property: if $\Ll'$ is a Lie algebra, and $i : M' \rightarrow \Ll'$ and $j : \Le \rightarrow \Ll'$ are Lie algebra maps satisfying $i \circ \alpha= j \circ \iota$ and the property $i(\pi(x) \cdot y)=[j(x),i(y)]$ holds for all $x \in \Le$ and $y \in M'$, then there exists a unique Lie algebra map $\psi : \Ll \rightarrow \Ll'$ such that $\psi \circ \overline{\alpha}=j$ and $\psi \circ \overline{\iota} = i$. 
If we define a Lie algebra action of $\Le$ on $\Ll'$ by putting $x \cdot y = [j(x), y]$, then the condition $i(x \cdot y)=[j(x),i(y)]$ is equivalent with $i$ being an $\Le$-module map.

Remember we started with a Lie algebra extension $\underline{e}$. Using the universal property for the push-out construction, we can find a map $p : \Ll \rightarrow \Lg$ such that $p \circ \overline{\iota}$ is trivial and $p \circ \overline{\alpha}= \pi$. The following property holds.
\begin{proposition}
The diagram \[\xymatrix{\underline{e} : 0 \ar[r] & M \ar[d]^{\alpha}\ar[r]^{\iota} & \Le \ar[r]^{\pi} \ar[d]^{\overline{\alpha}} & \Lg \ar[r] & 0\\\underline{e}': 0 \ar[r] & M' \ar[r]^{\overline{\iota}} & \Ll \ar[r]^{p} & \Lg \ar@{=}[u]\ar[r] & 0}\] is commutative with exact rows. Furthermore, $[\underline{e}']=\tau_*[\underline{e}]$ in $H^2(\Lg, M')$, where $\tau_*$ is the map induced by $\tau$ on cohomology level.
\end{proposition}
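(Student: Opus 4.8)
The plan is to treat this as the push-out analogue of Proposition \ref{pb}, so the proof breaks into three independent tasks: exactness of the bottom row $\underline{e}'$, commutativity of the diagram, and the identification of the cohomology class. (Here the induced map should read $\alpha_*$, the map $H^2(\Lg,M)\to H^2(\Lg,M')$ induced by the $\Lg$-module map $\alpha$, since no $\tau$ enters the push-out data.) Commutativity is the cheapest part: the left square $\overline{\iota}\circ\alpha=\overline{\alpha}\circ\iota$ is built into the construction of $\Ll$, and the relations $p\circ\overline{\alpha}=\pi$ and $p\circ\overline{\iota}=0$ are exactly how $p$ was produced via the universal property, so the right square commutes trivially.

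For exactness I would argue directly on the quotient $\Ll=(M'\rtimes\Le)/S$. Injectivity of $\overline{\iota}$ follows because $(m',0)\in S$ forces $m'=\alpha(x)$ and $\iota(x)=0$ for some $x\in M$, whence $x=0$ and $m'=0$ by injectivity of $\iota$. Surjectivity of $p$ is immediate from $p\circ\overline{\alpha}=\pi$ and surjectivity of $\pi$. The essential point is $\Ker p=\Image\overline{\iota}$: the inclusion $\Image\overline{\iota}\subseteq\Ker p$ is just $p\circ\overline{\iota}=0$, while for the converse I take a class $[(m',e)]$ with $\pi(e)=0$, use exactness of the top row to write $e=\iota(x)$, and then observe that $(m',\iota(x))$ and $(m'+\alpha(x),0)$ differ by the element $-(\alpha(x),-\iota(x))\in S$, so that $[(m',\iota(x))]=[(m'+\alpha(x),0)]=\overline{\iota}(m'+\alpha(x))$. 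This computation modulo $S$ is the one place where a little care with signs is needed, and I regard it as the main (if minor) obstacle.

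The cohomology statement then follows from a cocycle computation, which is the heart of the argument. Choose a $k$-linear section $s:\Lg\to\Le$ of $\pi$; its associated $2$-cocycle $f:\Lambda^2\Lg\to M$ is determined by $\iota(f(x,y))=[s(x),s(y)]-s[x,y]$, and $\alpha_*[\underline{e}]$ is represented by $\alpha\circ f$. For $\underline{e}'$ I would use the section $s'=\overline{\alpha}\circ s$, which satisfies $p\circ s'=\pi\circ s=\mathbb{1}_{\Lg}$. Since $\overline{\alpha}$ is a Lie algebra homomorphism, its cocycle $f'$ obeys $\overline{\iota}(f'(x,y))=\overline{\alpha}([s(x),s(y)]-s[x,y])=\overline{\alpha}(\iota(f(x,y)))=\overline{\iota}(\alpha(f(x,y)))$, where the last equality is the commutativity of the push-out square. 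Injectivity of $\overline{\iota}$ then yields $f'=\alpha\circ f$, so $[\underline{e}']=\alpha_*[\underline{e}]$ as required. No spectral-sequence input is needed; everything reduces to the universal property of the push-out and the explicit bracket on $M'\rtimes\Le$.
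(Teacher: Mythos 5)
Your proposal is correct and follows the same route the paper sketches: exactness and commutativity verified directly from the quotient construction and the universal property, and the identification of the class via a cocycle computation with the section $s'=\overline{\alpha}\circ s$; you simply supply the details the paper leaves as ``straight-forward'' and ``a cocycle argumentation''. Your observation that the induced map should be written $\alpha_*$ rather than $\tau_*$ is also right --- that is a typo carried over from the pull-back proposition.
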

\begin{proof}
It is straight-forward that the bottom sequence is exact. Commutativity at the left hand side is given by the definition of the push-out construction, whereas commutativity at the right hand side comes from the construction of $p$. The fact that $[\underline{e}']=\tau_*[\underline{e}]$ is verified using a cocycle argumentation.
\end{proof}
Like for the pull-back, we also have the converse proposition.
\begin{proposition} \label{exact_is_poc}
If \[\xymatrix{\underline{e} : 0 \ar[r] & M \ar[d]^{\alpha}\ar[r]^{\iota} & \Le \ar[r]^{\pi} \ar[d]^{\sigma} & \Lg \ar[r] & 0\\\underline{e}': 0 \ar[r] & M' \ar[r]^{j} & \Le' \ar[r]^{\pi'} & \Lg \ar@{=}[u]\ar[r] & 0}\]
is a commutative diagram with exact rows, with $M$ and $M'$ having a $\Lg$-module structure induced by $\underline{e}$ and $\underline{e}'$ respectively, and $\alpha$ is an $\Lg$-module map, then $[\underline{e}']=\tau_*[\underline{e}]$ in $H^2(M',\Lg)$ (so $\Le'$ is isomorphic to the push-out construction of $\iota$ and $\alpha$).
\end{proposition}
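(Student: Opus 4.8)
The plan is to mirror the proof of Proposition \ref{exact_is_pb}, replacing pull-back by push-out throughout. First I would form the push-out $\Ll$ of $\iota$ and $\alpha$; this is legitimate precisely because $\alpha$ is a $\Lg$-module map, which is what makes $S=\{(\alpha(x),-\iota(x))\mid x\in M\}$ an ideal of $M'\rtimes\Le$. By the previous proposition, this produces a commutative diagram with exact rows whose bottom row is the extension $\underline{e}_\Ll:\ 0\to M'\xrightarrow{\overline\iota}\Ll\xrightarrow{p}\Lg\to 0$ and which satisfies $[\underline{e}_\Ll]=\alpha_*[\underline{e}]$ (the class denoted $\tau_*[\underline{e}]$ in the statement). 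It therefore suffices to exhibit an equivalence of extensions between $\underline{e}'$ and $\underline{e}_\Ll$: equivalent extensions define the same class, so this gives $[\underline{e}']=[\underline{e}_\Ll]=\alpha_*[\underline{e}]$.

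The key step is to produce the comparison map using the universal property of the push-out. I would apply that property with target $\Le'$, feeding in the maps $j:M'\to\Le'$ and $\sigma:\Le\to\Le'$ of the given diagram. The two hypotheses to verify are: (i) $j\circ\alpha=\sigma\circ\iota$, which is exactly the commutativity of the left-hand square of the given diagram; and (ii) the module-compatibility $j(\pi(x)\cdot y)=[\sigma(x),j(y)]$ for all $x\in\Le$ and $y\in M'$. For (ii) I would use that the $\Lg$-action on $M'$ is the one induced by $\underline{e}'$, namely $j(g\cdot y)=[\widetilde g,j(y)]$ for any preimage $\widetilde g$ of $g$ under $\pi'$; taking $g=\pi(x)$ and the preimage $\widetilde g=\sigma(x)$, which is legitimate since $\pi'\circ\sigma=\pi$ (the right-hand square), yields precisely (ii). The universal property then delivers a unique Lie algebra map $\psi:\Ll\to\Le'$ with $\psi\circ\overline\iota=j$ and $\psi\circ\overline\alpha=\sigma$.

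Next I would check that $\psi$ is a morphism of extensions from $\underline{e}_\Ll$ to $\underline{e}'$ inducing the identity on $M'$ and on $\Lg$. The left square, $\psi\circ\overline\iota=j$, holds by construction. For the right square, $\pi'\circ\psi=p$, I would invoke the uniqueness clause of the push-out universal property: the two maps $\pi'\circ\psi$ and $p$ from $\Ll$ to $\Lg$ agree after precomposition with $\overline\iota$ (both vanish, as $j$ lands in $\ker\pi'$ and $p\circ\overline\iota=0$) and after precomposition with $\overline\alpha$ (both equal $\pi$, since $\pi'\circ\sigma=\pi$ and $p\circ\overline\alpha=\pi$), so they coincide. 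This gives a commutative ladder between $\underline{e}_\Ll$ and $\underline{e}'$ with identity maps on the outer terms and $\psi$ in the middle, whence the five-lemma forces $\psi$ to be an isomorphism and the two extensions are equivalent.

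I expect the only genuine subtlety to be condition (ii): it is the one point where the hypothesis that the module structure on $M'$ is the one induced by $\underline{e}'$ is actually used, and it hinges on the right-square identity $\pi'(\sigma(x))=\pi(x)$ legitimizing the choice of preimage $\widetilde g=\sigma(x)$. Everything else is a formal diagram chase entirely parallel to Proposition \ref{exact_is_pb}, so I would keep those verifications brief and refer back to that proof where convenient.
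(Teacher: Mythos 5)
Your proposal is correct and follows essentially the same route as the paper's own proof: form the push-out $\Ll$, invoke its universal property with the pair $(j,\sigma)$ (verifying the module-compatibility condition via $\pi=\pi'\circ\sigma$, exactly as the paper does in its parenthetical remark), establish the right-hand square by the uniqueness clause, and conclude with the five-lemma. Your write-up merely makes the verification of condition (ii) more explicit than the paper's one-line justification.
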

\begin{proof}
Call $\Ll$ the push-out construction of $\iota$ and $\alpha$. Using the universal property, we construct a map $\psi : \Ll \rightarrow \Le '$ with $\psi \circ \overline{\alpha} = \sigma$ and $\psi \circ \overline{\iota} = j$. (The condition $j(\pi(x) \cdot y)=[\sigma(x), j(y)]$ is satisfied since $\pi(x)=\pi' \circ \sigma (x)$.) We claim that the diagram 
\[\xymatrix{ 0 \ar[r] & M' \ar@{=}[d]\ar[r]^{\overline{\iota}} & \Ll \ar[d]^{\psi} \ar[r]^{p} & \Lg \ar[r] & 0\\ 0 \ar[r] & M' \ar[r]^{j} & \Le'  \ar[r]^{\pi'} & \Lg \ar@{=}[u]\ar[r] & 0}\]
is commutative. It follows from the definition of $\psi$ that the left hand side is commutative. Pre-composing both $p$ and $\pi' \circ \psi$ with $\overline{\iota}$ gives the trivial map, and pre-composing both maps with $\overline{\alpha}$ gives $\pi$. Now by uniqueness in the universal property, we conclude that $p = \pi ' \circ \psi$. By the previous proposition, both rows of the diagram are exact, and the five-lemma shows that $\psi$ is an isomorphism.  It follows that $[\underline{e}']=\tau_*[\underline{e}]$.
\end{proof}

\section{A categorical point of view}
A category $\mathcal{C}$ \emph{admits group objects} if it has finite products and a final object $T$. A \emph{group object} in $\mathcal{C}$ is an object $X$ with morphisms $\mu : X \times X \rightarrow X$ (``group law''), $\eta : T \rightarrow X$ (``neutral element'') and $i: X \rightarrow X$ (``inverse element''), satisfying axioms similar to the group axioms (e.g.\ see \cite[page 75]{macl71-1}). We denote the group object by $(X, \mu, \eta, i)$ or simply by $X$.

The following lemma is well-known.
\begin{lemma}
Let $F : \mathcal{C} \rightarrow \mathcal{D}$ be a functor between two categories that admit group objects. If $F$ preserves products and the final object, then $F$ preserves group objects.
\end{lemma}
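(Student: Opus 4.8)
The plan is to express each group-object axiom for $(X,\mu,\eta,i)$ as a commutative diagram built entirely from products, the final object $T$, the projections $\pi_j$, the diagonal maps $\Delta$, and the unique morphisms $e_X:X\to T$; since any functor sends commutative diagrams to commutative diagrams, the axioms will transport once the structural morphisms on the $F$-side are matched up correctly. Concretely, I would first recall that associativity reads $\mu\circ(\mu\times 1_X)=\mu\circ(1_X\times\mu)$, that the unit axiom reads $\mu\circ(\eta\circ e_X,1_X)=1_X=\mu\circ(1_X,\eta\circ e_X)$, and that the inverse axiom reads $\mu\circ(i,1_X)=\eta\circ e_X=\mu\circ(1_X,i)$, where the paired morphisms are formed by the universal property of the product.

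Next I would set up the comparison morphisms coming from the hypothesis. Since $F$ preserves products, for every pair $Y,Z$ the images $F\pi_1:F(Y\times Z)\to FY$ and $F\pi_2:F(Y\times Z)\to FZ$ form a product cone in $\mathcal{D}$, so there is a unique isomorphism $c_{Y,Z}:F(Y\times Z)\to FY\times FZ$ with $\pi_j\circ c_{Y,Z}=F\pi_j$. Since $F$ preserves the final object, $FT$ is final in $\mathcal{D}$, and I would simply take $FT$ to be the chosen final object there. I then define the candidate group-object structure on $FX$ by $\bar\mu:=F\mu\circ c_{X,X}^{-1}$, $\bar\eta:=F\eta:FT\to FX$, and $\bar\imath:=Fi$.

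The bulk of the work is a short list of compatibility properties of $c$, all immediate from universal properties: $c$ is natural, i.e.\ $c_{Y',Z'}\circ F(f\times g)=(Ff\times Fg)\circ c_{Y,Z}$; it carries the image of the diagonal to the diagonal, $c_{X,X}\circ F\Delta_X=\Delta_{FX}$ (checked by post-composing with $\pi_1,\pi_2$); the two routes $F(X\times X\times X)\to FX\times FX\times FX$ induced by the two bracketings agree and intertwine the associativity constraints; and $c$ identifies the image under $F$ of the unit isomorphism $T\times X\cong X$ with the unit isomorphism $FT\times FX\cong FX$. Granting these, I would apply $F$ to each of the three axiom diagrams above and splice in the isomorphisms $c^{-1}$ at the appropriate products: functoriality gives commutativity of the resulting outer diagram, and the compatibility facts rewrite each $F$-of-a-structural-map as the corresponding structural map on the $\mathcal{D}$-side, turning the transported associativity, unit, and inverse relations into exactly the group-object axioms for $(FX,\bar\mu,\bar\eta,\bar\imath)$.

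The only real obstacle is bookkeeping rather than mathematics: one must be disciplined about where the comparison isomorphisms are inserted, especially in the associativity axiom, where the triple product forces a choice of bracketing and one must check coherence of $c$ with the associator, and in the unit and inverse axioms, where the maps through $T$ must be matched with the final object $FT$. None of these checks require anything beyond the defining universal properties of products and the final object together with the functoriality of $F$, so the argument is entirely formal.
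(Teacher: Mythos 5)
Your proposal is correct and the group-object structure you put on $FX$, namely $\bar\mu = F\mu\circ c_{X,X}^{-1}$, $\bar\eta = F\eta$, $\bar\imath = Fi$, is exactly the one the paper records ($F(\mu)\circ j^{-1}$, $F(\eta)$, $F(i)$); the paper states the lemma as well-known and omits the proof entirely, so your formal diagram-transport argument supplies precisely the standard verification that is being taken for granted. The compatibility facts you list for the comparison isomorphism $c$ (naturality, the diagonal, coherence with the associator and unit constraints) are indeed all immediate from the universal property of the product, so there is no gap.
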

More precise, if $(X,\mu, \eta, i)$ is a group object in $\mathcal{C}$, $(FX, F(\mu) \circ j^{-1}, F(\eta), F(i))$ is a group object in $\mathcal{D}$, where $j : F(X \times X) \rightarrow FX \times FX$ is the isomorphism coming from the fact that $F$ preserves products. 

As a result, we can prove the following lemma.
\begin{lemma}\label{they_preserve}
Both $H^n(\Lg, -)$ and $\Der(\Lg,-) : \Lg\mbox{-Mod} \rightarrow k\mbox{-Mod}$ preserve group objects. Moreover, the group structure of $H^n(\Lg, M)$ and $\Der (\Lg,M)$ induced by the abelian group law of a $\Lg$-module $M$ is the standard group structure.
\end{lemma}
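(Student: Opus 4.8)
The plan is to apply the preceding well-known lemma on functors preserving group objects, so the main task reduces to verifying its hypotheses for the two functors in question. First I would fix the target category $k\mbox{-Mod}$ and note that its finite products are the usual direct sums (biproducts) and its final object is the zero module; similarly, in $\Lg\mbox{-Mod}$ finite products are direct sums with the diagonal $\Lg$-action and the final object is $0$. A $\Lg$-module $M$, viewed merely as an abelian group under its additive structure, is a group object in $\Lg\mbox{-Mod}$ via the addition map $M\times M\to M$, the zero map $0\to M$, and negation $M\to M$; these are all $\Lg$-module homomorphisms precisely because the $\Lg$-action is $k$-linear, so the abelian group law of $M$ genuinely lives in $\Lg\mbox{-Mod}$.

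The heart of the argument is then to check that each functor preserves finite products and the final object. For $\Der(\Lg,-)$ this is essentially formal: a derivation into a product $M\oplus N$ is the same thing as a pair of derivations, i.e.\ $\Der(\Lg, M\oplus N)\cong \Der(\Lg,M)\oplus\Der(\Lg,N)$ naturally, since the defining condition $d[x,y]=x\cdot d(y)-y\cdot d(x)$ is checked componentwise; and $\Der(\Lg,0)=0$, so the final object is preserved. For $H^n(\Lg,-)$ I would invoke the Chevalley--Eilenberg cochain complex $\Hom_k(\Lambda^\bullet\Lg, M)$: since $\Hom_k(\Lambda^n\Lg, -)$ commutes with finite direct sums and the differentials act componentwise, the whole complex splits as a direct sum, and taking cohomology commutes with finite direct sums. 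Hence $H^n(\Lg, M\oplus N)\cong H^n(\Lg,M)\oplus H^n(\Lg,N)$, and $H^n(\Lg,0)=0$. Both functors therefore satisfy the hypotheses of the lemma, so they preserve group objects.

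It remains to identify the resulting group structures explicitly. By the lemma, the group law on $H^n(\Lg,M)$ (resp.\ $\Der(\Lg,M)$) is $F(\mu)\circ j^{-1}$, where $\mu:M\oplus M\to M$ is addition and $j$ is the canonical product-splitting isomorphism. Unwinding this, the induced operation sends a pair of classes to the image under $H^n(\Lg,\mu)$ (resp.\ $\Der(\Lg,\mu)$) of the element of $H^n(\Lg, M\oplus M)$ corresponding to that pair under $j^{-1}$. Under the direct-sum identification above, $j^{-1}$ matches $(a,b)$ with the ``diagonal'' class, and post-composing with addition $\mu$ yields exactly $a+b$ computed cochain-wise (resp.\ derivation-wise). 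Since the ordinary additive structures on $H^n(\Lg,M)$ and $\Der(\Lg,M)$ are likewise defined by pointwise addition of cocycles and derivations, the two structures coincide.

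I expect the main obstacle to be bookkeeping rather than conceptual: one must be careful that the isomorphisms $\Der(\Lg, M\oplus N)\cong \Der(\Lg,M)\oplus\Der(\Lg,N)$ and $H^n(\Lg, M\oplus N)\cong H^n(\Lg,M)\oplus H^n(\Lg,N)$ are natural in $M$ and $N$, so that the product-preservation is genuine in the categorical sense the lemma requires, and that the comparison isomorphism $j$ used in the lemma really is the one induced by these natural splittings. Once naturality is pinned down, the final identification of the two group laws is a direct unwinding of $F(\mu)\circ j^{-1}$.
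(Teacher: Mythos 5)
Your proposal is correct and follows exactly the route the paper intends: the lemma is stated immediately after the general categorical fact with the phrase ``as a result,'' so the paper's (omitted) proof is precisely the verification that both functors preserve finite products and the final object, followed by unwinding $F(\mu)\circ j^{-1}$ to see that the induced operation is pointwise addition. Your write-up supplies the details the paper leaves implicit, and they check out.
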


The next lemma is also a well-known fact.
\begin{lemma}\label{homomorphism_of_group_objects}
If $\mathcal{C}$ and $\mathcal{D}$ are categories admitting group objects, and $a : F \rightarrow G$ is a natural transformation between two functors $F, \, G : \mathcal{C} \rightarrow \mathcal{D}$ that preserve group objects, then $a_X : FX \rightarrow GX$ is a group homomorphism for every object $X$ in $\mathcal{C}$.
\end{lemma}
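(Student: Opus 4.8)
The plan is to verify directly that $a_X$ satisfies the defining compatibilities of a morphism of group objects, using the explicit group-object structures on $FX$ and $GX$ recorded in the ``More precise'' remark above. The conclusion is only meaningful when $FX$ and $GX$ carry group structures, so let $(X,\mu,\eta,i)$ be a group object in $\mathcal{C}$, and write $j_F : F(X\times X)\to FX\times FX$ and $j_G : G(X\times X)\to GX\times GX$ for the comparison isomorphisms coming from the fact that $F$ and $G$ preserve products. These are characterised by $\mathrm{pr}_k\circ j_F = F(\mathrm{pr}_k)$ and $\mathrm{pr}_k\circ j_G = G(\mathrm{pr}_k)$ for $k=1,2$, where $\mathrm{pr}_1,\mathrm{pr}_2$ denote the respective product projections. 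The induced group laws are $\mu_{FX}=F(\mu)\circ j_F^{-1}$ and $\mu_{GX}=G(\mu)\circ j_G^{-1}$, and the induced inverse and unit maps are $F(i),G(i)$ and $F(\eta),G(\eta)$.

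The heart of the argument is compatibility with the multiplication, i.e.\ the identity $a_X\circ\mu_{FX}=\mu_{GX}\circ(a_X\times a_X)$. First I would rewrite the left-hand side as $a_X\circ F(\mu)\circ j_F^{-1}$ and apply naturality of $a$ to the morphism $\mu:X\times X\to X$, which gives $a_X\circ F(\mu)=G(\mu)\circ a_{X\times X}$. After this substitution the desired identity becomes $G(\mu)\circ a_{X\times X}\circ j_F^{-1}=G(\mu)\circ j_G^{-1}\circ(a_X\times a_X)$, so it suffices to prove the single compatibility
\begin{equation*}
j_G\circ a_{X\times X}=(a_X\times a_X)\circ j_F. \tag{$\ast$}
\end{equation*}

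To establish $(\ast)$, both of whose sides are morphisms $F(X\times X)\to GX\times GX$, I would invoke the universal property of the product $GX\times GX$ and compare composites with each projection. On one side $\mathrm{pr}_k\circ j_G\circ a_{X\times X}=G(\mathrm{pr}_k)\circ a_{X\times X}=a_X\circ F(\mathrm{pr}_k)$, the last equality being naturality of $a$ applied now to $\mathrm{pr}_k:X\times X\to X$. On the other side $\mathrm{pr}_k\circ(a_X\times a_X)\circ j_F=a_X\circ\mathrm{pr}_k\circ j_F=a_X\circ F(\mathrm{pr}_k)$. Since the two agree for $k=1,2$, the universal property forces $(\ast)$, and hence $a_X$ respects the multiplication.

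Finally I would observe that compatibility with inverses is immediate from naturality of $a$ on $i:X\to X$, which reads $a_X\circ F(i)=G(i)\circ a_X$, and that compatibility with the units follows from naturality on $\eta:T\to X$ together with the remark that $a_T:F(T)\to G(T)$ is the unique morphism between the final objects $F(T)$ and $G(T)$ of $\mathcal{D}$, hence the canonical isomorphism, so that $a_X\circ F(\eta)=G(\eta)\circ a_T$ identifies the two unit maps. Thus $a_X$ is a morphism of group objects, i.e.\ a group homomorphism. The only genuinely delicate point is the bookkeeping with the comparison isomorphisms $j_F$ and $j_G$; once the reduction to $(\ast)$ is made, the whole statement is a formal consequence of naturality of $a$ and the universal property of products.
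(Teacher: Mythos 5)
Your argument is correct: the reduction to the compatibility $(\ast)$ of $a_{X\times X}$ with the comparison isomorphisms, verified componentwise via the universal property of the product and naturality applied to the projections, is exactly the standard verification, and it establishes the commutativity of the square with $\mu_{FX}$ and $\mu_{GX}$ that the paper takes as the definition of a group homomorphism. The paper itself states this lemma as a well-known fact and gives no proof, so your write-up simply supplies the omitted (and entirely routine) argument; the extra checks for units and inverses are harmless but not required by the paper's definition.
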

Here, a morphism $m : X \rightarrow Y$ of $\mathcal{C}$ between two group objects $(X, \mu_X, \eta_X, i_X)$ and $(Y,\mu_Y,\eta_Y, i_Y)$ is a \emph{group homomorphism} if the  diagram 
\[\xymatrix{X \times X \ar[r]^-{\mu_X} \ar[d]^{m \times m} & X \ar[d]^{m}\\
Y \times Y \ar[r]^-{\mu_Y} & Y}\]
is commutative. 
This lemma will be a powerful tool in showing that the maps we construct are group homomorphisms in the traditional sense.

\section{The map $tr$}\label{tr}

Given a short exact sequence of Lie algebras over a fixed field $k$
\[\xymatrix{0 \ar[r] & \mathfrak{h} \ar[r] & \mathfrak{g} \ar[r] & \mathfrak{g}/\mathfrak{h} \ar[r] & 0,}\] and a $\Lg$-module $M$. There is a seven-term exact sequence  induced by the Hochschild-Serre spectral sequence. We will construct a seven-term exact sequence of the same form
\[\xymatrix{0 \ar[r] & H^1(\Lg/\Lh,M^{\Lh}) \ar[r]^-{\infl} & H^1(\Lg,M) \ar[r]^-{\res} & H^1(\Lh,M)^{\Lg/\Lh} \ar[r]^-{\tr} & H^2(\Lg/\Lh,M^{\Lh}) }\]
\[\xymatrix{\mbox{} \ar[r]^-{\infl} & H^2(\Lg,M)_1 \ar[r]^-{\rho} & H^1(\Lg/\Lh,H^1(\Lh,M)) \ar[r]^-{\lambda} & H^3(\Lg/\Lh,M^{\Lh})},\] where  $H^2(\Lg,M)_1$ denotes the kernel of the restriction map $ \res:H^2(\Lg,M) \to H^2(\Lh,M)$. To do this, we will explicitly construct maps $\tr$, $\rho$ and $\lambda$, using the interpretations of the cohomology spaces, such that we get an exact sequence as above. First, we construct an appropriate map $\tr : H^1(\Lh,M)^{\Lg/\Lh} \to  H^2(\Lg/\Lh,M^{\Lh})$.

We will introduce the desired map in a more general setting. Take an extension of Lie algebras \begin{equation}\label{eq:seq}\underline{e}: \xymatrix{0 \ar[r] & M \ar[r]^{i} & \mathfrak{e} \ar[r]^{p} & \mathfrak{g} \ar[r] & 0,}\end{equation}
for which there exists a Lie algebra homomorphism $s : \mathfrak{h} \rightarrow \mathfrak{e}$ with $p \circ s = \mbox{id}_{\mathfrak{h}}$. We will call $\underline{e}$ a \emph{partially split extension} and $s$ a \emph{partial splitting} of $\underline{e}$. 
For each $x \in \mathfrak{e}$, define the map \[d^{s}_x : \Lh \rightarrow M; z \mapsto [x,s(z)] -s[p(x),z].\] 
The map $d^s_x$ is linear, and obviously depends on $s$ (see also Lemma \ref{change_section}). The following observation is immediate.
\begin{lemma}
For every $x \in \mathfrak{e}$, the map $d^s_x : \mathfrak{h} \rightarrow M$ is a derivation. 
\end{lemma}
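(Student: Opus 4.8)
The plan is to verify directly that $d^s_x$ satisfies the derivation identity $d^s_x[z,w] = p(x)\cdot d^s_x(w) - \ldots$, wait: since $d^s_x : \Lh \to M$, the derivation condition to check is $d^s_x[z,w] = z \cdot d^s_x(w) - w \cdot d^s_x(z)$ for all $z,w \in \Lh$, where the action of $\Lh$ on $M$ is the restriction of the $\Lg$-action. The key idea is that $d^s_x(z) = [x,s(z)] - s[p(x),z]$ lives in $M$ (embedded via $i$, identified with its image), because applying $p$ gives $p[x,s(z)] - p\,s[p(x),z] = [p(x), p\,s(z)] - [p(x),z] = [p(x),z]-[p(x),z]=0$, using $p\circ s = \mathrm{id}_\Lh$ and that $p$ is a Lie algebra map. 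So the target is genuinely $M$, and exactness of $\underline e$ lets me identify $M$ with $\ker p$.

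First I would expand both sides using the explicit formula for $d^s_x$. Writing everything inside $\Le$ (suppressing the injection $i$ and working in $\ker p \subseteq \Le$), the left-hand side is $d^s_x[z,w] = [x, s[z,w]] - s[p(x),[z,w]]$. Since $s$ is a Lie algebra homomorphism, $s[z,w] = [s(z),s(w)]$, so the first term becomes $[x,[s(z),s(w)]]$, which I would expand with the Jacobi identity in $\Le$. For the right-hand side I would compute $z \cdot d^s_x(w)$ as the bracket $[s(z), d^s_x(w)]$ in $\Le$ — this is legitimate because for $m \in M$ the action is $z\cdot m = [\widetilde z, m]$ for any lift $\widetilde z$ of $z$, and $s(z)$ is exactly such a lift. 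Thus $z\cdot d^s_x(w) - w\cdot d^s_x(z) = [s(z),[x,s(w)]-s[p(x),w]] - [s(w),[x,s(z)]-s[p(x),z]]$.

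The computation then reduces to a bookkeeping exercise: collect the terms from the Jacobi expansion of the left-hand side and match them against the expanded right-hand side. The genuinely load-bearing step is the Jacobi identity applied to $[x,[s(z),s(w)]] = [[x,s(z)],s(w)] + [s(z),[x,s(w)]]$, together with the homomorphism property $s[a,b]=[s(a),s(b)]$ used repeatedly to convert $s$ of a bracket into a bracket of $s$'s, and the fact that $p(s(z)) = z$ simplifies $[p(x),z]$-type expressions once $s$ is applied. I would also use the Jacobi identity on the $\Lg/\Lh$-side terms of the form $s[p(x),[z,w]]$, rewriting $[p(x),[z,w]] = [[p(x),z],w] + [z,[p(x),w]]$ and then applying $s$ as a homomorphism.

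The main obstacle I anticipate is purely the term-matching: there are several brackets of the shape $[s(z),s[p(x),w]]$ and $[s[p(x),z],s(w)]$ that must cancel against the corresponding pieces arising from expanding $s[p(x),[z,w]]$, and one must be careful with signs and with the distinction between brackets taken in $\Le$ versus the induced action on $M$. Once one systematically substitutes $s[a,b]=[s(a),s(b)]$ everywhere and applies Jacobi twice — once for $[x,[s(z),s(w)]]$ and once for the $\Lh$-bracket $[p(x),[z,w]]$ before applying $s$ — all spurious terms cancel and the derivation identity drops out, so I expect no conceptual difficulty beyond this algebra.
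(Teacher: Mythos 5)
Your proposal is correct and is precisely the direct verification the paper has in mind when it calls this observation ``immediate'': the identity follows from one application of the Jacobi identity to $[x,[s(z),s(w)]]$, one to $[p(x),[z,w]]$ before applying the homomorphism $s$, and the observation that $z\cdot m=[s(z),m]$ since $s(z)$ is a lift of $z$. The terms regroup as $[d^s_x(z),s(w)]+[s(z),d^s_x(w)]$, which is exactly $z\cdot d^s_x(w)-w\cdot d^s_x(z)$, so no issues remain (your only slip is calling $[p(x),[z,w]]$ a ``$\Lg/\Lh$-side'' term when it lives in $\Lg$).
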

Observe that $d^s_x$ measures the defect of $s$ preserving the action of $x \in \mathfrak{e}$, where the $\mathfrak{e}$-action on $\mathfrak{e}$ is given by the left adjoint action, and the action on $\mathfrak{g}$ is the one induced by $p : \mathfrak{e} \rightarrow \mathfrak{g}$.  

Define $I_{\mathfrak{e}}(s(\mathfrak{h}))$ as the subalgebra of $\mathfrak{e}$ consisting of all elements $x \in \mathfrak{e}$ such that $[x,s(\mathfrak{h})] \subset s(\mathfrak{h})$. This is the largest Lie subalgebra of $\Le$ in which $\Image s$ is an ideal. The proof of the following lemmas is left to the reader.

\begin{lemma}
The intersection $i(M) \cap I_{\mathfrak{e}}(s(\mathfrak{h}))$ equals $i(M^{\mathfrak{h}})$.
\end{lemma}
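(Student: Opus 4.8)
The plan is to show the two inclusions simultaneously by determining, for an arbitrary $m \in M$, exactly when $i(m)$ lies in $I_{\mathfrak{e}}(s(\mathfrak{h}))$. First I would compute the bracket $[i(m), s(z)]$ for $z \in \mathfrak{h}$ using the description of the $\mathfrak{g}$-action on $M$ induced by the extension. Since $p(s(z)) = z$, the element $s(z)$ is a legitimate pre-image of $z$ under $p$, so the defining relation $i(x \cdot m) = [\widetilde{x}, i(m)]$ applied with $\widetilde{x} = s(z)$ gives $[s(z), i(m)] = i(z \cdot m)$, and hence $[i(m), s(z)] = -i(z \cdot m) \in i(M)$ for every $z \in \mathfrak{h}$.

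Next I would record the key intersection identity $i(M) \cap s(\mathfrak{h}) = 0$. This is immediate: $i(M) = \Ker p$, and if $s(z) \in \Ker p$ then $z = p(s(z)) = 0$, so $s(z) = 0$. Combining this with the previous step, the condition $i(m) \in I_{\mathfrak{e}}(s(\mathfrak{h}))$, namely $[i(m), s(z)] \in s(\mathfrak{h})$ for all $z \in \mathfrak{h}$, forces $[i(m), s(z)]$ to lie in $i(M) \cap s(\mathfrak{h}) = 0$ and therefore to vanish. Conversely, if all these brackets vanish they certainly lie in $s(\mathfrak{h})$. Thus $i(m) \in I_{\mathfrak{e}}(s(\mathfrak{h}))$ is equivalent to $[i(m), s(z)] = 0$ for all $z \in \mathfrak{h}$.

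Finally I would translate this vanishing condition back through $i$. Since $[i(m), s(z)] = -i(z \cdot m)$ and $i$ is injective, the equalities $[i(m), s(z)] = 0$ for all $z \in \mathfrak{h}$ hold if and only if $z \cdot m = 0$ for all $z \in \mathfrak{h}$, i.e.\ $m \in M^{\mathfrak{h}}$. Hence $i(m) \in i(M) \cap I_{\mathfrak{e}}(s(\mathfrak{h}))$ precisely when $m \in M^{\mathfrak{h}}$, which gives the claimed equality $i(M) \cap I_{\mathfrak{e}}(s(\mathfrak{h})) = i(M^{\mathfrak{h}})$.

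I expect no serious obstacle: the whole argument is elementary once the bracket $[i(m), s(z)]$ is rewritten via the module action. The one point requiring a little care is the observation $i(M) \cap s(\mathfrak{h}) = 0$, since this is what upgrades the statement ``the bracket lies in $s(\mathfrak{h})$'' to ``the bracket vanishes''; without it the equivalence would not close.
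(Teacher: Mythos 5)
Your argument is correct and is precisely the elementary verification the authors intend when they leave this lemma to the reader: rewriting $[i(m),s(z)]$ as $-i(z\cdot m)$ via the induced action and using $i(M)\cap s(\mathfrak{h})=0$ to upgrade membership in $s(\mathfrak{h})$ to vanishing. The key observation you flag at the end is indeed the only non-trivial point, and you handle it correctly.
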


\begin{lemma}\label{surjective}
The map $p|_{I_{\mathfrak{e}}(s(\mathfrak{h}))} : I_{\mathfrak{e}}(s(\mathfrak{h})) \rightarrow \mathfrak{g}$ is surjective iff $d^{s}_x$ is an inner derivation for all $x \in \mathfrak{e}$.
\end{lemma}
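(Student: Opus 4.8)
The plan is to reduce the whole statement to two elementary observations about the derivations $d^s_x$ and then read off both implications from a single coset picture for the quotient map $p$.

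First I would establish the key equivalence: for a fixed $x \in \Le$, one has $x \in I_{\Le}(s(\Lh))$ if and only if $d^s_x = 0$. For the forward direction, if $[x,s(z)] \in s(\Lh)$ I write $[x,s(z)] = s(w)$ with $w \in \Lh$ and apply $p$; since $p \circ s = \mathrm{id}_{\Lh}$ and $p$ is a Lie algebra map, this forces $w = [p(x),z]$, whence $[x,s(z)] = s[p(x),z]$ and $d^s_x(z) = 0$. Conversely, $d^s_x = 0$ gives $[x,s(z)] = s[p(x),z] \in s(\Lh)$ for all $z$ (here one uses that $\Lh$ is an ideal of $\Lg$, so that $[p(x),z] \in \Lh$ and $s[p(x),z]$ is defined), i.e.\ $x \in I_{\Le}(s(\Lh))$.

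Second, I would record how $d^s_x$ depends on the representative of $x$ modulo $i(M)$. A direct computation, using that $i(z \cdot m) = [s(z),i(m)]$ and $p(i(m)) = 0$, gives $d^s_{x + i(m)} = d^s_x - \delta(m)$ for every $m \in M$ (after identifying $M$ with $i(M)$ via the injective map $i$). Thus moving $x$ inside its coset $x + i(M)$ alters $d^s_x$ precisely by inner derivations. Combining this with the first observation, $d^s_x$ is inner if and only if $d^s_x = \delta(m)$ for some $m$, i.e.\ $d^s_{x+i(m)} = 0$, i.e.\ $x + i(m) \in I_{\Le}(s(\Lh))$ for some $m$ --- equivalently, the coset $x + i(M)$ meets $I_{\Le}(s(\Lh))$.

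Finally, since $p$ is surjective with kernel $i(M)$, the map $p|_{I_{\Le}(s(\Lh))}$ is surjective exactly when every coset of $i(M)$ in $\Le$ contains an element of $I_{\Le}(s(\Lh))$; by the previous paragraph this happens if and only if $d^s_x$ is inner for all $x \in \Le$, which is the claim. The main obstacle is really just the first equivalence together with the bookkeeping around the identification $M \cong i(M)$ and the sign in $i(z \cdot m) = [s(z),i(m)]$; once the coset reformulation is in place, both directions of the ``iff'' fall out simultaneously, so no separate argument for each implication is needed.
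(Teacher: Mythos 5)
Your proof is correct, and the paper in fact leaves this lemma to the reader, so there is no official proof to diverge from. Your two key observations --- that $x \in I_{\Le}(s(\Lh))$ iff $d^s_x = 0$ (via applying $p$ to $[x,s(z)] = s(w)$), and that $d^s_{x+i(m)} = d^s_x - \delta(m)$ --- are exactly the identities the authors themselves use later (the second is the computation in the proof of Lemma \ref{rho_pre_image}), and your coset reformulation of surjectivity of $p|_{I_{\Le}(s(\Lh))}$ is the intended argument.
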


Let $\Omega$ be the set of all pairs $(\underline{e},s)$, where $\underline{e}$ is a partially split extension of the form (\ref{eq:seq}) and $s : \mathfrak{h} \rightarrow M$ is a partial section such that $d^s_x$ is an inner derivation for all $x \in \mathfrak{e}$. The preceding lemmas show that in this case, the sequence 
\[\xymatrix{0 \ar[r]& M^{\mathfrak{h}} \ar[r]^-{i} & I_{\mathfrak{e}}(s(\mathfrak{h})) \ar[r]^-{p} & \mathfrak{g} \ar[r] & 0}\] is exact. It is then immediate that the sequence 
\[\underline{e}':\xymatrix{0 \ar[r]& M^{\mathfrak{h}} \ar[r]^-{\overline{i}} & I_{\mathfrak{e}}(s(\mathfrak{h}))/s(\mathfrak{h}) \ar[r]^-{\overline{p}} & \mathfrak{g}/\mathfrak{h} \ar[r] & 0}\]
is also exact. We can thus define a map 
\[\omega : \Omega \rightarrow H^2(\mathfrak{g}/\mathfrak{h},M^{\mathfrak{h}}),\]
mapping the pair $(\underline{e},s)$ to the equivalence class of $\underline{e}'$. 

We first make some remarks about the construction of $\omega$. In the commutative diagram
\[\xymatrix{0 \ar[r] & M \ar[r]^i  & \mathfrak{e} \ar[r]^p & \mathfrak{g} \ar[r] & 0\\
0 \ar[r] & M^{\mathfrak{h}} \ar@{^{(}->}[u]^{j} \ar@{=}[d] \ar[r]^{i \ \ } & I_{\mathfrak{e}}(s(\mathfrak{h})) \ar[u]^{\iota} \ar[d] \ar[r]^{\ \ p} & \mathfrak{g} \ar@{=}[u] \ar[d]^{\pi}\ar[r] & 0\\
0 \ar[r] & M^{\mathfrak{h}} \ar[r]^{\overline{i} \ \ \ } & I_{\mathfrak{e}}(s(\mathfrak{h}))/s(\mathfrak{h}) \ar[r]^{\ \ \ \overline{p}}& \mathfrak{g}/\mathfrak{h} \ar[r] & 0,}\]
$\mathfrak{e}$ is the push-out construction of the inclusion map $j:M^{\mathfrak{h}} \hookrightarrow M$ and $i:M^{\mathfrak{h}} \rightarrow I_{\mathfrak{e}}(s(\mathfrak{h}))$, and $I_{\mathfrak{e}}(s(\mathfrak{h}))$ is the pull-back of $\overline{p}$ and $\pi $. This follows from Proposition \ref{exact_is_pb} and Proposition \ref{exact_is_poc}. 

Now we restrict ourselves to the case where the Lie algebra extension (\ref{eq:seq}) is the standard split extension 
\[\underline{e}_0: \xymatrix{0 \ar[r] & M \ar[r] & M \rtimes \mathfrak{g} \ar[r] & \mathfrak{g} \ar[r]& 0.}\] 
Take a derivation $d: \Lh \rightarrow M$. The map $s_d : \Lh \rightarrow M \rtimes \Lg$ defined as $s_d(z)=(d(z),z)$ for $z \in \mathfrak{h}$ is a partial splitting of $\underline{e}_0$, and the derivation $d^{\, s_d}_{(m,x)}$ equals $^{x} d - \delta(m)$. It follows that $(\underline{e}_0,s_d) \in \Omega$ iff $^{x} d$ is an inner derivation for all $x \in \mathfrak{e}$, or equivalently, $[d] \in H^1(\mathfrak{h},M)^{\mathfrak{g}/\mathfrak{h}}$. If we denote by $\mbox{Der}(\mathfrak{h},M)^{\mathfrak{g}/\mathfrak{h}}$ the pre-image of $H^1(\mathfrak{h},M)^{\mathfrak{g}/\mathfrak{h}}$ under the projection map $\mbox{Der}(\mathfrak{h},M) \rightarrow H^1(\mathfrak{h},M)$, we can define a map 
$\hat{\tr}: \mbox{Der}(\mathfrak{h},M)^{\mathfrak{g}/\mathfrak{h}} \rightarrow H^2(\mathfrak{g}/\mathfrak{h},M^{\mathfrak{h}})$ by
\[\hat{\tr}(d)=\omega(\underline{e}_0,s_d).\] It turns out that this is the appropriate map.

\begin{lemma}\label{tr_natural}
The map $\hat{\tr}: \mbox{Der}(\mathfrak{h},-)^{\mathfrak{g}/\mathfrak{h}} \rightarrow H^2(\mathfrak{g}/\mathfrak{h},-^{\mathfrak{h}})$ is a natural transformation of functors.
\end{lemma}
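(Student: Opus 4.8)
The plan is to verify the naturality square directly, reducing it to a single application of Proposition \ref{exact_is_poc}. Fix a $\Lg$-module homomorphism $\alpha : M \to M'$. On the source functor, the induced map sends a derivation $d$ to $\alpha \circ d$; the first thing to check is that this is well-defined on the invariant subspaces, i.e.\ that $\alpha$ carries $\Der(\Lh,M)^{\Lg/\Lh}$ into $\Der(\Lh,M')^{\Lg/\Lh}$. This is immediate from the identity ${}^{x}(\alpha \circ d) = \alpha \circ {}^{x}d$, which holds because $\alpha$ is a $\Lg$-module map, together with the observation that $\alpha$ carries inner derivations to inner derivations. On the target functor, $\alpha$ restricts to a $\Lg/\Lh$-module map $\alpha^{\Lh} : M^{\Lh} \to M'^{\Lh}$, inducing $(\alpha^{\Lh})_*$ on $H^2(\Lg/\Lh,-)$. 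Naturality then amounts to the identity
\[
\hat{\tr}(\alpha \circ d) = (\alpha^{\Lh})_* \, \hat{\tr}(d)
\]
for every $d \in \Der(\Lh,M)^{\Lg/\Lh}$.

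The central construction is a Lie algebra homomorphism comparing the two auxiliary extensions. I would define $\alpha \rtimes \mathbb{1} : M \rtimes \Lg \to M' \rtimes \Lg$ by $(m,x) \mapsto (\alpha(m),x)$; since $\alpha$ is a $\Lg$-module map this is a Lie algebra homomorphism, and it satisfies $(\alpha \rtimes \mathbb{1}) \circ s_d = s_{\alpha \circ d}$, so that $(\alpha \rtimes \mathbb{1})\bigl(s_d(\Lh)\bigr) = s_{\alpha\circ d}(\Lh)$. Because $\alpha \rtimes \mathbb{1}$ is a homomorphism carrying $s_d(\Lh)$ onto $s_{\alpha\circ d}(\Lh)$, it maps $I_{M\rtimes\Lg}(s_d(\Lh))$ into $I_{M'\rtimes\Lg}(s_{\alpha\circ d}(\Lh))$ and hence descends to a homomorphism
\[
\beta : I_{M\rtimes\Lg}(s_d(\Lh))/s_d(\Lh) \longrightarrow I_{M'\rtimes\Lg}(s_{\alpha\circ d}(\Lh))/s_{\alpha\circ d}(\Lh).
\]

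Next I would assemble $\alpha^{\Lh}$, $\beta$ and the identity on $\Lg/\Lh$ into a morphism of short exact sequences from $\underline{e}'$ (the extension whose class is $\hat{\tr}(d)$) to $\underline{e}''$ (the one whose class is $\hat{\tr}(\alpha\circ d)$). Commutativity of the right-hand square is clear, since $\alpha \rtimes \mathbb{1}$ commutes with the projections onto $\Lg$; commutativity of the left-hand square follows by evaluating $\beta$ on the image of $M^{\Lh}$, where $\beta([(m,0)]) = [(\alpha(m),0)]$ recovers $\alpha^{\Lh}$. Since $\alpha^{\Lh}$ is a $\Lg/\Lh$-module map and both rows are extensions inducing the standard module structures on $M^{\Lh}$ and $M'^{\Lh}$, Proposition \ref{exact_is_poc} yields $[\underline{e}''] = (\alpha^{\Lh})_*[\underline{e}']$, which is precisely the desired identity.

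The main obstacle is the construction and verification of the middle map $\beta$: one must check that $\alpha\rtimes\mathbb{1}$ genuinely preserves the subalgebras $I_{M\rtimes\Lg}(s_d(\Lh))$ in which the image of the partial splitting is an ideal, and that it is compatible with passing to the quotients by $s_d(\Lh)$ and $s_{\alpha\circ d}(\Lh)$. Everything else reduces to routine diagram-chasing and the already-established push-out machinery. I also expect to need the (routine) remark that the $\Lg/\Lh$-module structures induced on $M^{\Lh}$ and $M'^{\Lh}$ by $\underline{e}'$ and $\underline{e}''$ coincide with the standard restrictions of the $\Lg$-actions, so that Proposition \ref{exact_is_poc} applies verbatim.
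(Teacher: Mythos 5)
Your proposal is correct and follows essentially the same route as the paper: both hinge on the Lie algebra map $\alpha \rtimes \mathbb{1}$ carrying $s_d(\Lh)$ onto $s_{\alpha\circ d}(\Lh)$, hence $I_{M\rtimes\Lg}(s_d(\Lh))$ into $I_{M'\rtimes\Lg}(s_{\alpha\circ d}(\Lh))$, and both conclude via the push-out characterisation of Proposition \ref{exact_is_poc} (which packages the universal property of the push-out and the five-lemma that the paper invokes explicitly). The step you flag as the main obstacle goes through exactly as you indicate, so nothing further is needed.
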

\begin{proof}
We have to show that for every $\mathfrak{g}$-module map $\alpha : M_1 \rightarrow M_2$, the following diagram is commutative.
\[\xymatrix{\mbox{Der}(\mathfrak{h},M_1)^{\mathfrak{g}/\mathfrak{h}} \ar[r]^{\hat{\tr}} \ar[d]^{\alpha_*} & H^2(\mathfrak{g}/\mathfrak{h},M_1^{\mathfrak{h}}) \ar[d]^{\alpha_*}\\
\mbox{Der}(\mathfrak{h},M_2)^{\mathfrak{g}/\mathfrak{h}} \ar[r]^{\hat{\tr}}  & H^2(\mathfrak{g}/\mathfrak{h},M_2^{\mathfrak{h}}) }\]
Take $d \in \mbox{Der}(\mathfrak{h},M_1)^{\mathfrak{g}/\mathfrak{h}}$ and construct an extension representing $\hat{\tr}(d)$. Use the push-out construction with respect to the restriction $M_1^{\Lh} \to M_2^{\Lh}$ of $\alpha$ to find $\alpha_*(\hat{\tr}(d))$. On the other hand, we can construct an extension representing $\hat{\tr}(\alpha_*(d))$, with $\alpha_*(d)=\alpha \circ d$. Now the map $\alpha \rtimes \mathbb{1} : M_1 \rtimes \Lh \to M_2 \rtimes \Lh$ induces an equivalence between the two extensions, by the universal property of the push-out, the five-lemma and the Proposition \ref{exact_is_pb} and Proposition \ref{exact_is_poc}. (Lemma \ref{char_I} can be useful to characterise the extension algebras.)
\end{proof}

Now the following corollary follows directly from Lemma \ref{they_preserve} and Lemma \ref{homomorphism_of_group_objects}.
\begin{corollary}\label{tr_group_hom}
The map $\hat{\tr}: \mbox{Der}(\mathfrak{h},-)^{\mathfrak{g}/\mathfrak{h}} \rightarrow H^2(\mathfrak{g}/\mathfrak{h},-^{\mathfrak{h}})$ is a homomorphism of groups.
\end{corollary}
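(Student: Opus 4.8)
The plan is to invoke the categorical machinery assembled just before the statement, namely Lemma~\ref{they_preserve} and Lemma~\ref{homomorphism_of_group_objects}, and apply it to the natural transformation $\hat{\tr}$ established in Lemma~\ref{tr_natural}. The corollary is essentially a one-line consequence, so the real content is checking that the hypotheses of those two lemmas are genuinely met. First I would recall the setup: Lemma~\ref{homomorphism_of_group_objects} says that if $a : F \to G$ is a natural transformation between functors $F, G : \mathcal{C} \to \mathcal{D}$ both of which preserve group objects, then each component $a_X$ is a group-object homomorphism. Here I take $\mathcal{C} = \Lg\text{-Mod}$, $\mathcal{D} = k\text{-Mod}$, the source functor $F = \Der(\Lh,-)^{\Lg/\Lh}$, the target functor $G = H^2(\Lg/\Lh,-^{\Lh})$, and $a = \hat{\tr}$. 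The module category $\Lg\text{-Mod}$ admits group objects, and every $M$ in it is such a group object via its abelian addition; similarly $k\text{-Mod}$ admits group objects.

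The key steps, in order, are therefore: (i) confirm that the two functors in question preserve group objects, so that by the preservation lemma the sets $F(M)$ and $G(M)$ carry group-object structures in $k\text{-Mod}$; (ii) confirm that under the preservation lemma these induced group structures agree with the ordinary additive group structures one expects on $\Der(\Lh,M)^{\Lg/\Lh}$ and on $H^2(\Lg/\Lh,M^{\Lh})$; and (iii) invoke Lemma~\ref{tr_natural} to see that $\hat{\tr}$ is a natural transformation, whence Lemma~\ref{homomorphism_of_group_objects} gives that each component $\hat{\tr}_M$ is a homomorphism of group objects, which by (ii) is exactly a homomorphism of abelian groups in the usual sense. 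Step (i) for $H^2(\Lg/\Lh,-^{\Lh})$ requires knowing that both $(-)^{\Lh}$ (invariants) and $H^n(\Lg/\Lh,-)$ preserve finite products and the final object; both are limits of diagrams, hence commute with the finite product $M_1 \times M_2$ and send the zero module to the zero module. The functor $\Der(\Lh,-)^{\Lg/\Lh}$ likewise is built from $\Der(\Lh,-)$, which is left exact and product-preserving, followed by the $\Lg/\Lh$-invariance subobject, so it too preserves products and the final object. This is precisely what Lemma~\ref{they_preserve} records, so I would simply cite it for the individual building blocks.

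The step I expect to require the most care is verifying (ii), that the abstractly induced group-object structure coincides with the concrete one. This is the substantive assertion in the second sentence of Lemma~\ref{they_preserve}: the group law on $H^n(\Lg,M)$ or $\Der(\Lg,M)$ obtained by transporting the abelian group law $\mu_M : M \times M \to M$ through the product-preserving functor is the familiar pointwise addition of cocycles, respectively of cohomology classes. Granting Lemma~\ref{they_preserve} as stated in the excerpt, this identification is already available, so in the proof I would not reprove it but merely note that it lets us translate ``homomorphism of group objects'' into ``homomorphism of abelian groups.'' With all three ingredients in hand the conclusion is immediate, and the proof reduces to assembling the citations to Lemma~\ref{they_preserve}, Lemma~\ref{homomorphism_of_group_objects}, and Lemma~\ref{tr_natural} in the correct logical order. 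The only genuine obstacle is ensuring that the functors are applied to the right categories and that naturality from Lemma~\ref{tr_natural} is stated for the full module category rather than a single module, so that the natural-transformation hypothesis of Lemma~\ref{homomorphism_of_group_objects} is literally satisfied.
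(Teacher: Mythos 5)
Your proposal is correct and follows exactly the paper's route: the paper proves this corollary in one line by citing Lemma \ref{they_preserve} and Lemma \ref{homomorphism_of_group_objects}, applied to the natural transformation established in Lemma \ref{tr_natural}. Your additional care in checking that the composite functors $\Der(\Lh,-)^{\Lg/\Lh}$ and $H^2(\Lg/\Lh,-^{\Lh})$ (not just $\Der(\Lg,-)$ and $H^n(\Lg,-)$ as literally stated in Lemma \ref{they_preserve}) preserve group objects is a reasonable elaboration of a point the paper leaves implicit.
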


\begin{lemma}\label{extend_derivation}
For $d \in \mbox{Der}(\mathfrak{h},M)^{\mathfrak{g}/\mathfrak{h}}$, $\hat{\tr}(d) \equiv 0$ iff there exists a derivation $\widetilde{d} : \mathfrak{g} \rightarrow M$ extending $d$.
\end{lemma}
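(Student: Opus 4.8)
The plan is to translate both sides of the stated equivalence into statements about splittings of the two exact sequences occurring in the definition of $\omega(\underline{e}_0,s_d)$, and then to pass between them. Throughout write $Q = I_{M\rtimes\Lg}(s_d(\Lh))$, $N = s_d(\Lh)$, let $\pi : Q \to Q/N$ be the quotient map, $p : Q \to \Lg$ the restriction of the projection $M\rtimes\Lg \to \Lg$, and $q : \Lg \to \Lg/\Lh$ the quotient map. Since $[d]\in H^1(\Lh,M)^{\Lg/\Lh}$, Lemma \ref{surjective} together with the intersection lemma ($i(M)\cap Q = i(M^{\Lh})$) guarantee that the two rows
\[0 \to M^{\Lh} \xrightarrow{i} Q \xrightarrow{p} \Lg \to 0, \qquad \underline{e}': 0 \to M^{\Lh} \xrightarrow{\overline{i}} Q/N \xrightarrow{\overline{p}} \Lg/\Lh \to 0\]
are exact, and by construction $\hat{\tr}(d) = [\underline{e}']$. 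I will also use the dictionary of Section \ref{der_sec_sdc}: a derivation $\widetilde d : \Lg \to M$ amounts to a Lie algebra section $s_{\widetilde d}(x) = (\widetilde d(x),x)$ of $\underline{e}_0$, and $\widetilde d$ extends $d$ precisely when $s_{\widetilde d}|_{\Lh} = s_d$.

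For the direction ($\Leftarrow$), suppose such a $\widetilde d$ exists. Using that $s_{\widetilde d}$ is a homomorphism and that $\Lh$ is an ideal, for $x\in\Lg$ and $z\in\Lh$ one gets $[s_{\widetilde d}(x),s_d(z)] = s_{\widetilde d}([x,z]) = s_d([x,z])\in s_d(\Lh)$, so $s_{\widetilde d}$ takes values in $Q$ and sends $\Lh$ into $N$. Hence it descends to a Lie algebra map $\overline{s_{\widetilde d}} : \Lg/\Lh \to Q/N$, and the computation $\overline{p}(\overline{s_{\widetilde d}}(q(x))) = q(p(s_{\widetilde d}(x))) = q(x)$ shows $\overline{p}\circ\overline{s_{\widetilde d}} = \mathbb{1}$. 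Thus $\underline{e}'$ splits and $\hat{\tr}(d) = 0$.

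The substantive direction is ($\Rightarrow$). Assume $\hat{\tr}(d) = 0$, so $\underline{e}'$ has a Lie algebra section $\sigma : \Lg/\Lh \to Q/N$ with $\overline{p}\circ\sigma = \mathbb{1}$; set $\overline{S} = \sigma(\Lg/\Lh)$, a subalgebra of $Q/N$, and $S = \pi^{-1}(\overline{S})$, a subalgebra of $Q$ containing $N$. The key claim is that $p|_S : S \to \Lg$ is an isomorphism. For surjectivity, given $x_0\in\Lg$ lift $\sigma(q(x_0))$ to some $y\in S$; then $p(y)$ agrees with $x_0$ modulo $\Lh$, and subtracting the appropriate $s_d(z_0)\in N\subseteq S$ corrects this, using $p\circ s_d = \mathbb{1}_{\Lh}$. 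For injectivity, $\ker(p|_S) = S\cap i(M^{\Lh})$, and its image under $\pi$ lies in $\overline{S}\cap\ker\overline{p} = 0$ (as $\overline{p}|_{\overline{S}}$ is injective), whence $\ker(p|_S)\subseteq N\cap i(M^{\Lh}) = 0$. Inverting gives a homomorphic section $t := (p|_S)^{-1} : \Lg \to Q \subseteq M\rtimes\Lg$ of $\underline{e}_0$, so $t(x) = (\widetilde d(x),x)$ for a derivation $\widetilde d : \Lg \to M$. Finally, for $z\in\Lh$ we have $s_d(z)\in S$ with $p(s_d(z)) = z$, so injectivity of $p|_S$ forces $t(z) = s_d(z)$; hence $\widetilde d$ extends $d$.

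The main obstacle is the isomorphism claim in ($\Rightarrow$): a splitting of the quotient extension $\underline{e}'$ does not obviously lift to a splitting of $0\to M^{\Lh}\to Q\to\Lg\to 0$, and the crucial input is the transversality $\overline{S}\cap\ker\overline{p} = 0$ combined with the fact that $N = s_d(\Lh)$ meets $i(M^{\Lh})$ trivially, which together force the preimage $S$ to be a genuine complement rather than merely to surject onto $\Lg$. The remaining points — that $S$ is a subalgebra, that $t$ is a Lie algebra map, and the two diagram chases — are routine.
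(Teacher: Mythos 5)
Your proof is correct and follows essentially the same route as the paper: translate $\hat{\tr}(d)=0$ into a splitting of the quotient extension $\underline{e}'$, lift it to a splitting of $0\to M^{\Lh}\to I_{M\rtimes\Lg}(s_d(\Lh))\to\Lg\to 0$, and observe that the lift is forced to restrict to $s_d$ on $\Lh$. The only difference is that where the paper produces the lift by invoking the universal property of the pull-back (having identified $I_{M\rtimes\Lg}(s_d(\Lh))$ as the pull-back of $\overline{p}$ and $\pi$), you construct the same lift by hand, checking directly that $p$ restricted to $\pi^{-1}(\sigma(\Lg/\Lh))$ is an isomorphism; both arguments are sound.
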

\begin{proof}
Put $I=I_{M \rtimes \Lg}(s_d(\Lh))$.
If $d=\widetilde{d}|_{\mathfrak{h}}$, we get a splitting $\widetilde{s}$ extending $s_d$, defined by $\widetilde{s}(x)=(\widetilde{d}(x),x)$.  Obviously $s_d(\mathfrak{h})$ is an ideal in $\widetilde{s}(\mathfrak{g})$, so $\widetilde{s}$ has image in $I$. It follows immediately that the quotient map $\overline{s} : \mathfrak{g}/\mathfrak{h} \rightarrow I / s_d(\mathfrak{h})$ is a well-defined splitting of the sequence $\underline{e}'$, representing $\hat{\tr}(d)$, so $\hat{\tr}(d)\equiv 0$.
Conversely, suppose that $\hat{\tr}(d) \equiv 0$. This means that there is a splitting $\alpha : \mathfrak{g}/\mathfrak{h} \rightarrow I / s_d(\mathfrak{h})$ of $\underline{e}'$. 
Since $I$ is the pull-back of $\overline{p}$ and $\pi$, we can use the pull-back property to construct a splitting $\alpha' : \mathfrak{g} \rightarrow I$ of $p$, such that the composition with the quotient map $I \rightarrow I/ s_d(\mathfrak{h})$ equals $\alpha \circ \pi$. It follows that $\alpha'(\mathfrak{h}) \subset s_d(\mathfrak{h})$, so $\alpha'|_{\mathfrak{h}}=s$. Now define $\widetilde{d}$ as the composition of $\alpha'$ with $I \hookrightarrow M \rtimes \mathfrak{g} \rightarrow M$. It is clear that this is the derivation we are looking for.
\end{proof}
We state the following consequences. 

\begin{corollary}
The map $\hat{\tr}$ yields a well-defined map $\tr : H^1(\Lh,M)^{\Lg / \Lh} \to H^2(\Lg/\Lh,M^{\Lh})$.
\end{corollary}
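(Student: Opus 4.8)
The plan is to show that $\hat{\tr}$ factors through the quotient defining $H^1(\Lh,M)^{\Lg/\Lh}$. Recall that by construction $\mbox{Der}(\Lh,M)^{\Lg/\Lh}$ is the preimage of $H^1(\Lh,M)^{\Lg/\Lh}$ under the projection $\Der(\Lh,M)\to H^1(\Lh,M)$, so that
\[
H^1(\Lh,M)^{\Lg/\Lh}=\mbox{Der}(\Lh,M)^{\Lg/\Lh}\big/\Inn(\Lh,M),
\]
where $\Inn(\Lh,M)$ is a subgroup of $\mbox{Der}(\Lh,M)^{\Lg/\Lh}$ because every inner derivation has vanishing (hence a fortiori $\Lg/\Lh$-invariant) cohomology class. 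To obtain a well-defined $\tr$, I therefore want to show that $\hat{\tr}$ is a group homomorphism that annihilates the subgroup $\Inn(\Lh,M)$; the induced map on cosets will then be the desired $\tr$, characterised by $\tr([d])=\hat{\tr}(d)$.

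The first half is already available: Corollary \ref{tr_group_hom} tells me that $\hat{\tr}$ is a homomorphism of groups. Hence it remains only to verify that $\hat{\tr}(d)\equiv 0$ for every $d\in\Inn(\Lh,M)$. Given such an inner derivation, I write $d=d_m$ for some $m\in M$, so that $d(z)=z\cdot m$ for all $z\in\Lh$. The plan is then to exhibit an explicit extension to the whole of $\Lg$: the inner derivation $\widetilde{d}:\Lg\to M$, $x\mapsto x\cdot m$, is a derivation of $\Lg$ whose restriction to $\Lh$ is precisely $d$. Consequently $d$ extends to a derivation on $\Lg$, and Lemma \ref{extend_derivation} immediately yields $\hat{\tr}(d)\equiv 0$.

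Putting the two observations together, $\hat{\tr}$ is a group homomorphism vanishing on $\Inn(\Lh,M)$, so by the universal property of the quotient it descends to a well-defined homomorphism $\tr:H^1(\Lh,M)^{\Lg/\Lh}\to H^2(\Lg/\Lh,M^{\Lh})$. There is essentially no obstacle in this argument, since all the substantive content has already been packaged into Lemma \ref{extend_derivation} and Corollary \ref{tr_group_hom}; the only point I need to check by hand is the elementary fact that an inner derivation of $\Lh$ valued in $M$ is the restriction of the corresponding inner derivation of $\Lg$, which is immediate from the definitions.
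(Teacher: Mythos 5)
Your proof is correct and follows exactly the paper's own argument: use Corollary \ref{tr_group_hom} for the homomorphism property, then observe that an inner derivation $d_m$ of $\Lh$ extends to the inner derivation $x\mapsto x\cdot m$ of $\Lg$ and apply Lemma \ref{extend_derivation}. The extra detail you supply about $H^1(\Lh,M)^{\Lg/\Lh}$ being the quotient of $\Der(\Lh,M)^{\Lg/\Lh}$ by $\Inn(\Lh,M)$ is a harmless elaboration of what the paper leaves implicit.
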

\begin{proof}
Since we already proved that $\hat{\tr}$ is a group homomorphism, it is sufficient to prove that $\hat{\tr}(d)=0$ if $d : \mathfrak{h} \rightarrow M$ is an inner derivation. Since $M$ is a $\mathfrak{g}$-module, it is immediate that in this case we can extend $d$ to an inner derivation of $\mathfrak{g}$. Together with Lemma \ref{extend_derivation}, this finishes the proof.
\end{proof}

\begin{corollary}
The sequence 
\[\xymatrix{0 \ar[r] & H^1(\mathfrak{g}/\mathfrak{h},M^{\mathfrak{h}}) \ar[r]^-{\infl} & H^1(\mathfrak{g},M) \ar[r]^-{\res} & H^1(\mathfrak{h},M)^{\mathfrak{g}/\mathfrak{h}} \ar[r]^-{\tr} & H^2(\mathfrak{g}/\mathfrak{h},M^{\mathfrak{h}})}\]
 is exact.
\end{corollary}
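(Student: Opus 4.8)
The plan is to verify exactness at each of the four nonzero terms. Exactness at $H^1(\mathfrak{g}/\mathfrak{h},M^{\mathfrak{h}})$ (injectivity of $\infl$) and at $H^1(\mathfrak{g},M)$ are the classical inflation--restriction statements, which I would prove directly on the level of derivations; the only genuinely new point is exactness at $H^1(\mathfrak{h},M)^{\mathfrak{g}/\mathfrak{h}}$, and this is already essentially encoded in Lemma \ref{extend_derivation}. Throughout I would use the interpretation of $H^1$ as $\Der/\Inn$, together with the explicit descriptions that $\infl$ sends the class of $\bar d:\mathfrak{g}/\mathfrak{h}\to M^{\mathfrak{h}}$ to the class of $\bar d\circ\pi$ (postcomposed with $M^{\mathfrak{h}}\hookrightarrow M$), while $\res$ sends $[e]$ to $[e|_{\mathfrak{h}}]$.

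For injectivity of $\infl$: if $\bar d\circ\pi=\delta(m)$ is inner, then evaluating on $\mathfrak{h}$ forces $m\in M^{\mathfrak{h}}$, and the standing hypothesis that $\mathfrak{h}$ is an ideal shows both that $x\cdot m\in M^{\mathfrak{h}}$ and that $x\cdot m$ depends only on $\pi(x)$; hence $\bar d=\delta(m)$ is already an inner derivation over $\mathfrak{g}/\mathfrak{h}$, so $[\bar d]=0$. For exactness at $H^1(\mathfrak{g},M)$: the composite $\res\circ\infl$ vanishes because an inflated derivation is zero on $\mathfrak{h}$; conversely, if $\res[e]=0$ I would subtract a suitable inner derivation $\delta(m)$ to arrange $e|_{\mathfrak{h}}=0$, and then use the ideal property via $e[x,z]=x\cdot e(z)-z\cdot e(x)$ with $[x,z]\in\mathfrak{h}$ to conclude that $e$ takes values in $M^{\mathfrak{h}}$ and factors through $\mathfrak{g}/\mathfrak{h}$, exhibiting $[e]$ as an inflation.

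The heart of the matter is exactness at $H^1(\mathfrak{h},M)^{\mathfrak{g}/\mathfrak{h}}$, i.e.\ $\Image\res=\Ker\tr$. By Lemma \ref{extend_derivation}, $\tr[d]=0$ precisely when $d$ extends to a derivation $\widetilde d:\mathfrak{g}\to M$. If $[d]=\res[e]=[e|_{\mathfrak{h}}]$, then $d-e|_{\mathfrak{h}}=\delta(m)|_{\mathfrak{h}}$ for some $m\in M$, so $e+\delta(m)$ is a $\mathfrak{g}$-derivation restricting to $d$ on the nose, whence $\tr[d]=0$; conversely, any extension $\widetilde d$ of $d$ satisfies $\res[\widetilde d]=[d]$, so $\Ker\tr\subseteq\Image\res$. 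Thus the two subspaces coincide. I would preface this with the routine checks that $\res$ indeed lands in the $\mathfrak{g}/\mathfrak{h}$-invariants (computing ${}^{x}(e|_{\mathfrak{h}})(z)=z\cdot e(x)$ for a $\mathfrak{g}$-derivation $e$, which is inner) and that $\tr$ is the well-defined descent of $\hat{\tr}$ established in the preceding corollary.

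I expect the main obstacle to be purely bookkeeping rather than conceptual: matching the extendability criterion of Lemma \ref{extend_derivation} with the image of $\res$, and in particular correctly absorbing the inner derivation $\delta(m)$ so that it is $d$ itself, and not merely its cohomology class, that extends to $\mathfrak{g}$. All the analytic content has been front-loaded into Lemma \ref{extend_derivation} (together with the pull-back/push-out descriptions of $\omega$), so once extendability is in hand the corollary follows formally from the definitions of $\infl$, $\res$ and $\tr$.
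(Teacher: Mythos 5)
Your proposal is correct and follows essentially the same route as the paper: the only new exactness claim (at $H^1(\mathfrak{h},M)^{\mathfrak{g}/\mathfrak{h}}$) is deduced from Lemma \ref{extend_derivation} together with the observation that $\res$ is restriction of derivations, which is exactly the paper's one-line argument. You additionally write out the classical inflation--restriction exactness at the first two terms, which the paper leaves implicit; those verifications are also correct.
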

\begin{proof}
This follows readily from Lemma \ref{extend_derivation} once one observes that, on derivation level, the map $\res$ takes a derivation of $\Lg$ to its restriction to $\Lh$. 
\end{proof}

\begin{proposition}\label{tr_good}
The map $\tr : H^1(\mathfrak{h},M)^{\mathfrak{g}/\mathfrak{h}} \rightarrow H^2(\mathfrak{g}/\mathfrak{h},M^{\mathfrak{h}})$ is a linear map of vector spaces.
\end{proposition}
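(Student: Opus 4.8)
The plan is to upgrade the additivity of $\tr$, already furnished by Corollary \ref{tr_group_hom}, to full $k$-linearity by exploiting naturality. Additivity tells us that $\tr([d]+[d'])=\tr([d])+\tr([d'])$, so it only remains to verify the homogeneity condition $\tr(\mu[d])=\mu\,\tr([d])$ for every scalar $\mu \in k$ and every class $[d]\in H^1(\Lh,M)^{\Lg/\Lh}$. Since $\tr$ descends from $\hat{\tr}$, it suffices to establish the corresponding identity $\hat{\tr}(\mu d)=\mu\,\hat{\tr}(d)$ at the level of derivations and then to pass to the quotient.

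The key observation is that, because the action of $\Lg$ on $M$ is $k$-bilinear, scalar multiplication $\mu_M : M \to M$, $m \mapsto \mu m$, is a $\Lg$-module endomorphism of $M$. First I would apply Lemma \ref{tr_natural} to this particular module map $\mu_M$, which yields the commutative square
\[\xymatrix{\Der(\Lh,M)^{\Lg/\Lh} \ar[r]^-{\hat{\tr}} \ar[d]^{(\mu_M)_*} & H^2(\Lg/\Lh,M^{\Lh}) \ar[d]^{(\mu_M)_*}\\
\Der(\Lh,M)^{\Lg/\Lh} \ar[r]^-{\hat{\tr}} & H^2(\Lg/\Lh,M^{\Lh}).}\]
Hence $\hat{\tr}\bigl((\mu_M)_* d\bigr)=(\mu_M)_*\hat{\tr}(d)$, and the proof reduces to identifying both vertical maps with honest multiplication by $\mu$.

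Next I would check the two vertical maps separately. On the left, $(\mu_M)_*$ is post-composition $d \mapsto \mu_M \circ d = \mu d$, which is visibly multiplication by $\mu$ on $\Der(\Lh,M)$ and restricts as such to the invariant subspace. On the right, $(\mu_M)_*$ is the map induced on $H^2(\Lg/\Lh,-)$ by the restriction of $\mu_M$ to $M^{\Lh}$. I expect the main (though standard) point to be the verification that the map induced on cohomology by multiplication-by-$\mu$ on the coefficient module is again multiplication by $\mu$: this holds because on the Chevalley-Eilenberg complex $\Hom_k(\Lambda^{\bullet}(\Lg/\Lh),M^{\Lh})$ the induced cochain map is post-composition with $\mu_M$, i.e.~scalar multiplication by $\mu$ in each degree, and passing to cohomology preserves this. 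Feeding these two identifications back into the commutative square gives $\hat{\tr}(\mu d)=\mu\,\hat{\tr}(d)$; combined with the additivity from Corollary \ref{tr_group_hom} and the fact that $\tr$ is the map induced by $\hat{\tr}$ on $H^1(\Lh,M)^{\Lg/\Lh}$, this shows that $\tr$ respects both addition and scalar multiplication, hence is a $k$-linear map of vector spaces.
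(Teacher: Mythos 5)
Your proposal is correct and follows essentially the same route as the paper: additivity from Corollary \ref{tr_group_hom}, then homogeneity by applying the naturality of Lemma \ref{tr_natural} to the $\Lg$-module endomorphism $m \mapsto \mu m$ and identifying the induced maps on both sides with scalar multiplication. Your extra verification via the Chevalley-Eilenberg complex just fills in a step the paper leaves implicit.
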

\begin{proof}
We already showed in Corollary \ref{tr_group_hom} that $\tr$ preserves addition. It remains to show that $\tr$ preserves scalar multiplication with elements of the ring $k$.
Take $\lambda \in k$ and observe that the map $\lambda : M \rightarrow M$, $m \mapsto \lambda \cdot m$ is a $\mathfrak{g}$-module morphism. Hence we can take the restriction $\lambda: M^{\mathfrak{h}} \rightarrow M^{\mathfrak{h}}$. The induced maps $\lambda_*: H^1(\mathfrak{h},M)^{\mathfrak{g}/\mathfrak{h}} \rightarrow H^1(\mathfrak{h},M)^{\mathfrak{g}/\mathfrak{h}}$ and $\lambda_*: H^2(\mathfrak{g}/\mathfrak{h},M^{\mathfrak{h}}) \rightarrow H^2(\mathfrak{g}/\mathfrak{h},M^{\mathfrak{h}})$ give precisely the scalar multiplication on the cohomology spaces. By Lemma \ref{tr_natural}, $\tr$ is natural with respect to the modules, so $\tr$ is a linear map of vector spaces.
\end{proof}

\begin{lemma}\label{exactness_lemma}
Let $\underline{e}: \xymatrix@1{0 \ar[r] & M \ar[r]^i & \mathfrak{e} \ar[r]^p & \mathfrak{g} \ar[r]& 0}$ be an extension of $\mathfrak{g}$ by $M$. 
\begin{itemize}
\item If $\underline{e}$ is partially split and there exists a partial splitting $s$ of $\underline{e}$ with $d^s_x$ an inner derivation for all $x \in \mathfrak{e}$, then $\infl(\omega(\underline{e},s))=[\underline{e}]$. 
\item Conversely, if there exists an $[\underline{e'}] \in H^2(\mathfrak{g}/\mathfrak{h},M^{\mathfrak{h}})$ such that $[\underline{e}]=\infl[\underline{e'}]$, then there exists a partial splitting $s$ of $\underline{e}$ with $d^s_x$ an inner derivation for all $x \in \mathfrak{e}$, such that  $[\underline{e'}]=\omega(\underline{e},s)$.
\end{itemize}
\end{lemma}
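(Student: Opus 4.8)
The engine of both directions is the factorisation of inflation as $\infl = j_* \circ \pi^*$, where $\pi^*$ is pull-back along $\pi\colon \Lg \to \Lg/\Lh$ and $j_*$ is push-out along the $\Lg$-module inclusion $j\colon M^{\Lh} \hookrightarrow M$. The whole argument is organised around the three-row diagram displayed before the statement, in which the middle extension $\underline{m}\colon 0 \to M^{\Lh} \to I_{\Le}(s(\Lh)) \xrightarrow{p} \Lg \to 0$ mediates between $\underline{e}$ (top) and $\underline{e}'$ (bottom). Its exactness is precisely the hypothesis that $d^s_x$ is inner for all $x$, via Lemma \ref{surjective}, so that $\underline{m}$ genuinely represents a cohomology class.

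For the first bullet I would just read off two relations already recorded in the remarks following the construction of $\omega$. The lower square exhibits $I_{\Le}(s(\Lh))$ as the pull-back of $\overline{p}$ and $\pi$, so Proposition \ref{exact_is_pb} gives $[\underline{m}] = \pi^*[\underline{e}']$ in $H^2(\Lg, M^{\Lh})$; the upper square exhibits $\Le$ as the push-out along $j$, so Proposition \ref{exact_is_poc} gives $[\underline{e}] = j_*[\underline{m}]$ in $H^2(\Lg, M)$. Composing, $[\underline{e}] = j_*\pi^*[\underline{e}'] = \infl[\underline{e}'] = \infl(\omega(\underline{e},s))$, which is the claim.

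For the converse, let the given class be represented by $\underline{e}'\colon 0 \to M^{\Lh} \to \Lq \xrightarrow{\overline{p}} \Lg/\Lh \to 0$. I would build an explicit model of $\underline{e}$ realising $\infl[\underline{e}']$: first the pull-back $\Lp = \{(x,q) \in \Lg \times \Lq : \pi(x) = \overline{p}(q)\}$ giving $0 \to M^{\Lh} \to \Lp \to \Lg \to 0$ representing $\pi^*[\underline{e}']$, and then the push-out $\Ll = (M \rtimes \Lp)/S$ along $j$, with structural map $\overline{\alpha}\colon \Lp \to \Ll$. By definition of inflation $\Ll$ is equivalent to $\Le$, so it suffices to work in $\Ll$. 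The crucial observation is that $\Lp$ carries a canonical partial splitting $\tilde{s}\colon \Lh \to \Lp$, $z \mapsto (z,0)$: it lands in $\Lp$ because $\pi(z) = 0 = \overline{p}(0)$ for $z \in \Lh$, and it is a Lie homomorphism because the bracket of $\Lp$ is componentwise. Pushing forward yields the partial splitting $s = \overline{\alpha} \circ \tilde{s}$ of $\Ll$.

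It remains to check that $(\underline{e},s) \in \Omega$ and that $\omega(\underline{e},s) = [\underline{e}']$. On $\Lp$ this is transparent: since $\Lh$ is an ideal and the bracket is componentwise, $[(x,q),(z,0)] = ([x,z],0) \in \tilde{s}(\Lh)$, so $\tilde{s}(\Lh)$ is an ideal of all of $\Lp$, hence $I_{\Lp}(\tilde{s}(\Lh)) = \Lp$; Lemma \ref{surjective} then shows all $d^{\tilde{s}}_x$ are inner, and the quotient map $\Lp \to \Lq$, $(x,q) \mapsto q$, identifies $\Lp/\tilde{s}(\Lh)$ with $\Lq$ compatibly with the extension data. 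The main obstacle, and the only genuine computation, is to transport this across the push-out. I would show $I_{\Ll}(s(\Lh)) = \overline{\alpha}(\Lp)$ and that $\overline{\alpha}$ restricts to an isomorphism $\Lp \xrightarrow{\sim} I_{\Ll}(s(\Lh))$ carrying $\tilde{s}(\Lh)$ onto $s(\Lh)$: bracketing a class $[(y,w)] \in \Ll$ (with $w = (x,q)$) against $s(z)$ produces $s([x,z])$ plus the image in $\Ll$ of $-z\cdot y \in M$, and since $\Image(M) \cap s(\Lh) = 0$ the latter lies in $s(\Lh)$ only when it vanishes, i.e.\ exactly when $y \in M^{\Lh}$; modulo $S$ every such class has a representative of the form $[(0,w)]$, giving $I_{\Ll}(s(\Lh)) = \overline{\alpha}(\Lp)$. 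Surjectivity of $p_{\Ll}$ on $I_{\Ll}(s(\Lh))$, hence $(\underline{e},s) \in \Omega$, and the identification $I_{\Ll}(s(\Lh))/s(\Lh) \cong \Lp/\tilde{s}(\Lh) \cong \Lq$ then yield $\omega(\underline{e},s) = [\underline{e}']$. I stress that this direct computation is essential: because inflation need not be injective, the first bullet alone (which only gives $\infl(\omega(\underline{e},s)) = [\underline{e}] = \infl[\underline{e}']$) would not suffice.
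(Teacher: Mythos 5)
Your proposal is correct and follows essentially the same route as the paper: both bullets rest on the factorisation $\infl = j_*\circ\pi^*$ together with the three-row diagram identifying $\Le$ as a push-out and $I_{\Le}(s(\Lh))$ as a pull-back, and in the converse direction both arguments construct the canonical partial splitting on the pull-back $\Lp$ (your explicit $z\mapsto(z,0)$ is exactly what the paper's appeal to the universal property produces), push it forward, and identify $\omega(\underline{e},s)$ with $[\underline{e}']$ through the intermediate quotient $\Lp/\tilde{s}(\Lh)$. The only difference is cosmetic: you compute $I_{\Ll}(s(\Lh))=\overline{\alpha}(\Lp)$ explicitly in the push-out model, where the paper only records that $\gamma$ has image in $I_{\Le}(s(\Lh))$ and passes to the induced quotient maps $\overline{\beta}$, $\overline{\gamma}$.
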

\begin{proof}
The first part of the lemma immediately follows from the fact that the inflation map is the composition $H^1(\mathfrak{g}/\mathfrak{h},M^{\mathfrak{h}}) \rightarrow H^1(\mathfrak{g},M^{\mathfrak{h}}) \rightarrow H^1(\mathfrak{g},M)$. On extension level, these maps are represented by respectively a pull-back and a push-out construction. Together with the remarks about the construction of $\omega$, this yields the first property.

Conversely, if $[\underline{e}]=\infl[\underline{e}']$, there is a commuting diagram
\[\xymatrix{\underline{e}' : 0 \ar[r] & M^{\mathfrak{h}} \ar@{=}[d]\ar[r] & \mathfrak{l} \ar[r] & \mathfrak{g}/\mathfrak{h} \ar[r] & 0\\
{\phantom{\underline{e} ': } 0} \ar[r]& M^{\mathfrak{h}} \ar@{^(->}[d]\ar[r] & \mathfrak{p} \ar[u]^{\beta} \ar[d]^-{\gamma} \ar[r] & \mathfrak{g} \ar[u] \ar@{=}[d] \ar[r] & 0\\
\underline{e}\phantom{'} :  0 \ar[r] & M \ar[r] & \mathfrak{e} \ar[r] & \mathfrak{g} \ar[r] & 0,}\]
where the upper right hand square is a pull-back, and the lower left hand square is a push-out construction. 
Using the pull-back property, we can find a partial splitting $s': \mathfrak{h} \rightarrow \mathfrak{p}$ of the second row, such that $\beta \circ s'=0$. 
Define $s = \gamma \circ s'$, a partial splitting of $\underline{e}$.
The short exact sequence $\xymatrix@1{0 \ar[r] & \mathfrak{h} \ar[r] & \mathfrak{g} \ar[r] & \mathfrak{g}/\mathfrak{h} \ar[r] & 0}$ induces an exact sequence 
\[\xymatrix{0 \ar[r] & \mathfrak{h} \ar[r]^{s'} & \mathfrak{p} \ar[r] & \mathfrak{l} \ar[r] & 0}\] by Proposition \ref{pb}, so it follows that $s'(\mathfrak{h})$ is an ideal in $\mathfrak{p}$. As a consequence, one can check that the middle row of the diagram induces an exact sequence
\[\underline{e}'' : \xymatrix{0 \ar[r] & M^{\mathfrak{h}} \ar[r] & \mathfrak{p}/s'(\mathfrak{h}) \ar[r] & \mathfrak{g}/\mathfrak{h}  \ar[r] & 0.}\] 
It also means that $\gamma$ has image in $I_{\mathfrak{e}}(s(\Lh))$, so the map $I_{\mathfrak{e}}(s(\Lh)) \rightarrow \mathfrak{g}$ is surjective. 
By Lemma \ref{surjective}, $d^s_x$ is an inner derivation for all $x \in \mathfrak{e}$. We show that $\omega(\underline{e},s)=[\underline{e}']$. 
Since $\beta \circ s'=0$, we can take the quotient map $\overline{\beta}: \mathfrak{p}/s'(\Lh) \rightarrow \mathfrak{l}$, making the diagram
\[\xymatrix{\underline{e}'':  0 \ar[r] & M^{\mathfrak{h}} \ar@{=}[d]\ar[r] & \mathfrak{p}/s'(\mathfrak{h}) \ar[d]^{\overline{\beta}}\ar[r] & \mathfrak{g}/\mathfrak{h} \ar@{=}[d] \ar[r] & 0\\
\underline{e}':  0 \ar[r]& M^{\mathfrak{h}} \ar[r] & \mathfrak{l} \ar[r] & \mathfrak{g}/\mathfrak{h}  \ar[r] & 0}\]
commute. 
Moreover, since $\gamma$ has image in $I_{\mathfrak{e}}(s(\Lh))$ and $s = \gamma \circ s'$, we can take the quotient map
$\overline{\gamma} : \mathfrak{p}/s'(\mathfrak{h}) \rightarrow I_{\mathfrak{e}}(s(\Lh))/s(\mathfrak{h})$. 
This gives an equivalence of extensions.
\[\xymatrix{{\phantom{\omega s  s s}}\underline{e}'':  0 \ar[r] & M^{\mathfrak{h}} \ar@{=}[d]\ar[r] & \mathfrak{p}/s'(\mathfrak{h}) \ar[d]^-{\overline{\gamma}}\ar[r] & \mathfrak{g}/\mathfrak{h} \ar@{=}[d] \ar[r] & 0\\
\omega(\underline{e},s):  0 \ar[r]& M^{\mathfrak{h}} \ar[r] & I_{\mathfrak{e}}(s(\Lh))/s(\mathfrak{h}) \ar[r] & \mathfrak{g}/\mathfrak{h}  \ar[r] & 0.}\]
This shows that both $\underline{e}'$ and $\omega(\underline{e},s)$ are equivalent to the same extension, hence they are themselves equivalent. 
\end{proof}
We can prove the following corollaries of Lemma \ref{exactness_lemma}.
\begin{corollary}
The sequence 
\[\xymatrix{H^1(\mathfrak{h},M)^{\mathfrak{g}/\mathfrak{h}} \ar[r]^-{\tr} & H^2(\mathfrak{g}/\mathfrak{h},M^{\mathfrak{h}})\ar[r]^-{\infl} & H^2(\mathfrak{g},M)}\]
is exact.
\end{corollary}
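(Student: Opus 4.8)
The plan is to deduce exactness at $H^2(\Lg/\Lh,M^{\Lh})$ directly from Lemma~\ref{exactness_lemma}, after recalling two facts already established earlier. First, the standard split extension $\underline{e}_0$ represents the zero class in $H^2(\Lg,M)$, since it is split by $x\mapsto(0,x)$. Second, every partial splitting $s$ of $\underline{e}_0$ is of the form $s=s_d$ for a unique derivation $d:\Lh\to M$, and $(\underline{e}_0,s_d)\in\Omega$ precisely when $[d]\in H^1(\Lh,M)^{\Lg/\Lh}$. With these in hand, exactness amounts to the equality $\Image(\tr)=\Ker(\infl)$, which I would split into the two standard inclusions.

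For $\Image(\tr)\subseteq\Ker(\infl)$, I would take $[d]\in H^1(\Lh,M)^{\Lg/\Lh}$, so that by definition $\tr([d])=\omega(\underline{e}_0,s_d)$. Since $(\underline{e}_0,s_d)\in\Omega$, the first bullet of Lemma~\ref{exactness_lemma} applies to the pair $(\underline{e}_0,s_d)$ and yields $\infl(\omega(\underline{e}_0,s_d))=[\underline{e}_0]$. As $[\underline{e}_0]=0$, this gives $\infl(\tr([d]))=0$, establishing the inclusion.

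For the reverse inclusion $\Ker(\infl)\subseteq\Image(\tr)$, I would start from a class $[\underline{e}']\in H^2(\Lg/\Lh,M^{\Lh})$ with $\infl[\underline{e}']=0$ and apply the second bullet of Lemma~\ref{exactness_lemma} to the extension $\underline{e}=\underline{e}_0$. This is legitimate precisely because $[\underline{e}_0]=0=\infl[\underline{e}']$, so the hypothesis $[\underline{e}_0]=\infl[\underline{e}']$ of the lemma is met. The lemma then produces a partial splitting $s$ of $\underline{e}_0$ with $d^s_x$ an inner derivation for all $x$ and with $\omega(\underline{e}_0,s)=[\underline{e}']$. Writing $s=s_d$ and invoking the invariance criterion recalled above, the inner-derivation condition forces $[d]\in H^1(\Lh,M)^{\Lg/\Lh}$, whence $[\underline{e}']=\omega(\underline{e}_0,s_d)=\hat{\tr}(d)=\tr([d])$ lies in the image of $\tr$.

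The entire substance of the argument sits inside Lemma~\ref{exactness_lemma}; the only point I would take care to verify explicitly is that the partial splitting $s$ returned by the lemma for $\underline{e}_0$ is genuinely of the form $s_d$ and that its associated derivation is $\Lg/\Lh$-invariant, so that $\omega(\underline{e}_0,s)$ really lands in $\Image(\tr)$ rather than merely in the image of the broader map $\omega$. Both of these were settled in the discussion preceding the definition of $\hat{\tr}$, so I expect no essential obstacle and the proof should reduce to a clean two-line invocation of Lemma~\ref{exactness_lemma} in each direction.
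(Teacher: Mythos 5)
Your proposal is correct and follows essentially the same route as the paper: both directions are obtained by applying Lemma~\ref{exactness_lemma} to the standard split extension $\underline{e}_0$, using $\tr[d]=\omega(\underline{e}_0,s_d)$ and $[\underline{e}_0]=0$. Your extra care in checking that the partial splitting returned by the lemma has the form $s_d$ with $[d]$ invariant is a point the paper leaves implicit, but it is the same argument.
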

\begin{proof}
Since $\tr[d]=\omega(\underline{e}_0,s_d)$, it follows directly from the previous lemma that $\infl \circ \tr [d]=[\underline{e}_0]=0$. Conversely, if $\infl[\underline{e}]=[\underline{e}_0]$, there exists an appropriate partial splitting $s$ of $\underline{e}_0$ such that $[\underline{e}]=\omega(\underline{e}_0,s)$. If $d$ is the splitting associated to $s$, it is clear that $[\underline{e}]=\tr[d]$.
\end{proof}

\begin{corollary}\label{cor_image_infl}
Take $[\underline{e}] \in H^2(\mathfrak{g},M)_1$. Now $[\underline{e}] \in \Image \infl$ iff there exists a partial splitting $s: \mathfrak{h} \rightarrow M$ such that for all $x \in \mathfrak{e}$, $d^s_x$ is an inner derivation. 
\end{corollary}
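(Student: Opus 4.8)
The plan is to read off both implications directly from Lemma \ref{exactness_lemma}, which has already carried out all the substantial work; the corollary is essentially a repackaging of its two bullets. Throughout, recall that a partial splitting of $\underline{e}$ is a Lie algebra section $s : \mathfrak{h} \rightarrow \mathfrak{e}$ of $p$ over the subalgebra $\mathfrak{h}$, and that the pair $(\underline{e},s)$ lies in $\Omega$ precisely when every $d^s_x$ is an inner derivation.

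First I would treat the implication ``$\Leftarrow$''. Assuming a partial splitting $s$ exists with $d^s_x$ inner for all $x \in \mathfrak{e}$, one has $(\underline{e},s) \in \Omega$, so $\omega(\underline{e},s) \in H^2(\mathfrak{g}/\mathfrak{h}, M^{\mathfrak{h}})$ is defined, and the first bullet of Lemma \ref{exactness_lemma} gives $\infl(\omega(\underline{e},s)) = [\underline{e}]$; hence $[\underline{e}] \in \Image \infl$. This direction does not even invoke the hypothesis $[\underline{e}] \in H^2(\mathfrak{g},M)_1$ as a separate assumption: the existence of the partial splitting $s$ already forces $\res[\underline{e}] = 0$, since $s$ splits the restricted extension $0 \rightarrow M \rightarrow p^{-1}(\mathfrak{h}) \rightarrow \mathfrak{h} \rightarrow 0$ (indeed $p(s(z)) = z \in \mathfrak{h}$, so $\Image s \subseteq p^{-1}(\mathfrak{h})$). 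Thus $[\underline{e}] \in H^2(\mathfrak{g},M)_1$ is automatic and consistent.

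Then I would treat ``$\Rightarrow$''. Assuming $[\underline{e}] \in \Image \infl$, write $[\underline{e}] = \infl[\underline{e}']$ for some $[\underline{e}'] \in H^2(\mathfrak{g}/\mathfrak{h}, M^{\mathfrak{h}})$. The second bullet of Lemma \ref{exactness_lemma}, applied to this equation, directly produces a partial splitting $s$ of $\underline{e}$ with $d^s_x$ inner for all $x \in \mathfrak{e}$ (in fact with $\omega(\underline{e},s) = [\underline{e}']$), which is exactly what is claimed. Again the hypothesis $[\underline{e}] \in H^2(\mathfrak{g},M)_1$ is automatic, because $\res \circ \infl = 0$: an inflated cocycle factors through $\mathfrak{g}/\mathfrak{h}$ and so vanishes on $\mathfrak{h} \times \mathfrak{h}$, whence $\Image \infl \subseteq H^2(\mathfrak{g},M)_1$.

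I do not anticipate a genuine obstacle, since both directions are immediate consequences of the already-proven Lemma \ref{exactness_lemma}. The only point requiring a word of care is explaining why the stated domain $H^2(\mathfrak{g},M)_1$ is the correct one: both sides of the equivalence independently force $\res[\underline{e}] = 0$, so the corollary is not vacuous and genuinely identifies $\Image \infl$ with the precise subset of $H^2(\mathfrak{g},M)_1$ admitting a partial splitting whose associated derivations $d^s_x$ are all inner.
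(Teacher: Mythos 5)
Your proof is correct and matches the paper's intent exactly: the corollary is stated there as an immediate consequence of Lemma \ref{exactness_lemma}, with each direction read off from one of its two bullets, just as you do. Your additional remarks (that a partial splitting forces $\res[\underline{e}]=0$, and that $\res\circ\infl=0$) are accurate and only confirm consistency of the hypothesis.
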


\begin{corollary}
The image of $\infl$ is contained in $H^2(\mathfrak{g},M)_1$.
\end{corollary}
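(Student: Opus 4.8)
The plan is to exploit the characterisation of the image of $\infl$ already provided by Lemma \ref{exactness_lemma}, together with the extension-theoretic description of the restriction map. Concretely, I would argue that every class in $\Image \infl$ is represented by a \emph{partially split} extension of $\mathfrak{g}$ by $M$, and that the mere existence of a partial splitting forces the restriction of the extension to $\mathfrak{h}$ to be trivial.

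First I would take an arbitrary class $[\underline{e}] \in \Image \infl$, say $[\underline{e}] = \infl[\underline{e}']$ with $[\underline{e}'] \in H^2(\mathfrak{g}/\mathfrak{h}, M^{\mathfrak{h}})$. By the second part of Lemma \ref{exactness_lemma}, there exists a partial splitting $s : \mathfrak{h} \rightarrow \mathfrak{e}$ of $\underline{e}$, i.e.\ a Lie algebra homomorphism with $p \circ s = \mathrm{id}_{\mathfrak{h}}$. For this conclusion I only need the existence of $s$; the additional information that $d^s_x$ is an inner derivation is not required here.

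Next I would recall that the restriction map $\res : H^2(\mathfrak{g}, M) \rightarrow H^2(\mathfrak{h}, M)$ sends $[\underline{e}]$ to the class of the pull-back of $p$ along the inclusion $\mathfrak{h} \hookrightarrow \mathfrak{g}$; by the description of the pull-back in Section \ref{pb_po}, this is exactly the extension
\[ \underline{e}|_{\mathfrak{h}} : \xymatrix{0 \ar[r] & M \ar[r] & p^{-1}(\mathfrak{h}) \ar[r]^-{p} & \mathfrak{h} \ar[r] & 0.} \]
Since $p \circ s = \mathrm{id}_{\mathfrak{h}}$, the image of $s$ lies in $p^{-1}(\mathfrak{h})$, so $s$ corestricts to a genuine Lie algebra splitting of $\underline{e}|_{\mathfrak{h}}$. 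Hence $\underline{e}|_{\mathfrak{h}}$ is equivalent to the standard split extension and $\res[\underline{e}] = 0$, which is precisely the statement $[\underline{e}] \in H^2(\mathfrak{g}, M)_1$, so $\Image \infl \subseteq H^2(\mathfrak{g},M)_1$.

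The only point requiring a little care — and the step I expect to be the main (minor) obstacle — is the clean identification of the restriction map on $H^2$ with the restricted extension $\underline{e}|_{\mathfrak{h}}$, and the verification that $s$, a priori a map into $\mathfrak{e}$, indeed factors through $p^{-1}(\mathfrak{h})$ as a splitting. Both are routine once the pull-back description of $\res$ is in place. Alternatively, one can bypass extensions entirely and give the one-line cocycle argument: an inflated cocycle $\bar f(x,y) = f(\pi(x), \pi(y))$, with $\pi : \mathfrak{g} \rightarrow \mathfrak{g}/\mathfrak{h}$ the projection, vanishes identically on $\mathfrak{h} \times \mathfrak{h}$ because $\pi(\mathfrak{h}) = 0$, so its restriction to $\mathfrak{h}$ is the zero cocycle and the restricted class is $0$.
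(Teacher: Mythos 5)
Your argument is correct and is essentially the one the paper intends (the corollary is stated without proof, but it is presented as a consequence of Lemma \ref{exactness_lemma}, whose second part supplies the partial splitting, and the paper's identification in Section \ref{second_map} of $H^2(\mathfrak{g},M)_1$ with the classes of partially split extensions is exactly your observation that a partial splitting corestricts to a splitting of the pulled-back extension over $\mathfrak{h}$). Your closing cocycle argument is a valid and even shorter alternative.
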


\section{The map $\rho$}\label{second_map}
We construct a map $H^2(\mathfrak{g},M)_1 \rightarrow H^1(\mathfrak{g}/\mathfrak{h}, H^1(\mathfrak{h},M))$. The class of an extension
\[\underline{e}: \xymatrix{0 \ar[r] & M \ar[r]^{i} & \mathfrak{e} \ar[r]^{p} & \mathfrak{g} \ar[r] & 0}\]  belongs to $H^2(\mathfrak{g},M)_1$ if and only $\underline{e}$ is partially split, i.e.~there exists a partial splitting $s_0 : \mathfrak{h} \rightarrow \mathfrak{e}$. Take such an extension $\underline{e}$ and fix a partial splitting $s_0$. Define a map $\widetilde{\rho}(\underline{e}) : \mathfrak{e} \rightarrow \mbox{Der}(\mathfrak{h},M)$ by putting $\widetilde{\rho}(\underline{e})(x)=d^{s_0}_x$, which was defined as \[d^{s_0}_x(z)=[x,s_0(z)]-s_0[p(x),z]\] for all $z \in \mathfrak{h}$. 
For simplicity of notation, we write $d_x=d^{s_0}_x$ if no other splitting than $s_0$ is considered. We immediately get the following result.
\begin{lemma}\label{rho_pre_image}
The class $[d_x] \in H^1(\mathfrak{h},M)$ depends only on $\pi \circ p (x)$.
\end{lemma}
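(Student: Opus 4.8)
The plan is to show that the cohomology class $[d_x] \in H^1(\Lh, M)$ is unchanged when we replace $x$ by $x + i(m)$ for any $m \in M$, and also unchanged when we replace $x$ by $x + s_0(z')$ for any $z' \in \Lh$; together these two invariances say precisely that $[d_x]$ depends only on the image of $x$ in $\Lg/\Lh$, i.e.\ on $\pi \circ p(x)$, since $i(M)$ together with $s_0(\Lh)$ generate (modulo $p$) exactly the kernel of $\pi \circ p$ in an appropriate sense. More carefully, two elements $x, x' \in \Le$ satisfy $\pi \circ p(x) = \pi \circ p(x')$ iff $p(x) - p(x') \in \Lh$, so we may write $x' = x + i(m) + s_0(z')$ for suitable $m \in M$ and $z' \in \Lh$ (using that $p$ restricted to $i(M)$ is zero, $p \circ s_0 = \mathrm{id}_\Lh$, and $i$ surjects onto $\ker p$). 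Thus it suffices to handle the two generating types of perturbation separately.

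First I would compute the effect of adding an element of $i(M)$. For $m \in M$, using the definition $d_x(z) = [x, s_0(z)] - s_0[p(x),z]$ and bilinearity of the bracket, I get $d_{x + i(m)}(z) = d_x(z) + [i(m), s_0(z)]$, because $p(i(m)) = 0$ kills the second term's contribution. Now $[i(m), s_0(z)]$ is, by the definition of the module action coming from the extension, exactly $i$ applied to $-(z \cdot m)$ (up to sign conventions: $[\widetilde z, i(m)] = i(z\cdot m)$ with $\widetilde z = s_0(z)$ a preimage of $z$, so $[i(m), s_0(z)] = -i(z\cdot m)$). Hence the difference $d_{x+i(m)} - d_x$ equals $z \mapsto -z\cdot m$, which is (up to sign) the inner derivation $\delta(m) = d_m$. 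Since inner derivations are coboundaries, $[d_{x+i(m)}] = [d_x]$ in $H^1(\Lh,M)$.

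Next I would treat adding $s_0(z')$ for $z' \in \Lh$. Here $p(s_0(z')) = z'$, so both terms in $d_{x+s_0(z')}(z)$ change: the expression becomes $d_x(z) + [s_0(z'), s_0(z)] - s_0[z', z]$. But the combination $[s_0(z'), s_0(z)] - s_0[z',z]$ is precisely the failure of $s_0$ to be a Lie algebra homomorphism, and since $s_0$ \emph{is} a partial splitting (a genuine Lie algebra map on $\Lh$), this combination vanishes identically. Therefore $d_{x+s_0(z')} = d_x$ already at the level of derivations, not merely up to coboundary. Combining the two cases, adding any element of $i(M) + s_0(\Lh)$ alters $d_x$ only by an inner derivation, so $[d_x]$ is invariant under all changes of $x$ that preserve $\pi\circ p(x)$, which is exactly the claim.

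The main obstacle is bookkeeping the sign conventions and confirming that every $x'$ with $\pi p(x') = \pi p(x)$ can indeed be written as $x + i(m) + s_0(z')$; this uses exactness of $\underline{e}$ (so $\ker p = i(M)$) and the fact that $p\circ s_0 = \mathrm{id}_\Lh$ realizes all of $\Lh \subset \Lg$ in $\Le$. Concretely, if $\pi p(x') = \pi p(x)$ then $p(x'-x) \in \Lh$, say $p(x'-x) = z'$; then $x' - x - s_0(z') \in \ker p = i(M)$, giving the decomposition. Once this is in place the two computations above are routine and the result follows immediately.
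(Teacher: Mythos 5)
Your proof is correct and follows essentially the same route as the paper: the paper likewise computes $d_{x+m}-d_x=-\delta(m)$ for $m\in M$ and observes $d_{s_0(z')}=0$ (equivalently, your vanishing of $[s_0(z'),s_0(z)]-s_0[z',z]$), combined with linearity of $x\mapsto d_x$ and the decomposition of the fibre of $\pi\circ p$. Your version merely spells out the decomposition $x'=x+i(m)+s_0(z')$ and the sign bookkeeping more explicitly, which is fine.
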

\begin{proof}
Since $M$ is abelian, it is easy to see that $d_{m+x}-d_x=-\delta(m)$, where $\delta(m)$ is the inner derivation mapping $z \in \mathfrak{h}$ to $z \cdot m$. This means that $[d_x]$ depends only on $p(x)$. Now for every element $z \in \mathfrak{h}$, we can choose the pre-image $s_0(z) \in \mathfrak{e}$. Since $s_0$ is a homomorphism of Lie algebras, it is clear that $d_{s_0(z)}=0$. This shows that if $p(x')=p(x) + z$ for some $z \in \Lh$, $[d_x]=[d_{x'}]$, so the class of the derivation $d_x$ only depends on $\pi \circ p(x)$.
\end{proof}

Observe that we have also proven that, if $[d_x]=[d]$, then there exists some $y \in \Le$ with $\pi \circ p(y)=\pi \circ p (x)$ such that $d = d_y$. 

As a consequence, we can define a map 
\[\overline{\rho}(\underline{e}) : \mathfrak{g}/\mathfrak{h} \rightarrow H^1(\mathfrak{h},M),\] mapping $x \in \mathfrak{g}/\mathfrak{h}$ to the class $[d_y] \in H^1(\mathfrak{h},M)$ with $\pi \circ p (y) = x$.

\begin{lemma}
The map $\overline{\rho}(\underline{e})$ is a derivation with respect to the standard $\Lg / \Lh$-action on $H^1(\Lh,M)$.
\end{lemma}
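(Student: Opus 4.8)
The plan is to show that $\overline{\rho}(\underline{e})$ satisfies the derivation identity
\[
\overline{\rho}(\underline{e})\,[\bar x,\bar y] \;=\; \bar x \cdot \overline{\rho}(\underline{e})(\bar y)\;-\;\bar y \cdot \overline{\rho}(\underline{e})(\bar x)
\]
for all $\bar x,\bar y \in \Lg/\Lh$, where $\cdot$ denotes the standard $\Lg/\Lh$-action on $H^1(\Lh,M)$. Since by Lemma \ref{rho_pre_image} the value $\overline{\rho}(\underline{e})(\bar x)$ is computed by lifting $\bar x$ to any $x \in \Le$ with $\pi\circ p(x)=\bar x$ and taking $[d_x]$, the whole identity can be checked on representatives in $\Le$. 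So first I would fix lifts $x,y \in \Le$ of $\bar x,\bar y$; then $[x,y]$ is a lift of $[\bar x,\bar y]$, and it suffices to prove the corresponding identity among derivations, namely that $d_{[x,y]} - \bigl(x \cdot d_y - y \cdot d_x\bigr)$ is an inner derivation, where the action of $\Lg$ on $\Der(\Lh,M)$ is the one recalled in Section \ref{der_sec_sdc}, $({}^{x}d)(z)=p(x)\cdot d(z)-d[p(x),z]$.

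The core is therefore a direct computation with the formula $d_x(z)=[x,s_0(z)]-s_0[p(x),z]$. I would expand $d_{[x,y]}(z) = [[x,y],s_0(z)] - s_0[[p(x),p(y)],z]$ and, separately, the expression $(x\cdot d_y - y\cdot d_x)(z)$ obtained from the $\Lg$-action formula applied to the derivations $d_y$ and $d_x$. For the bracket term, the Jacobi identity in $\Le$ rewrites $[[x,y],s_0(z)]$ as $[x,[y,s_0(z)]]-[y,[x,s_0(z)]]$; for the section term, the Jacobi identity in $\Lh$ (or $\Lg$) handles $[[p(x),p(y)],z]$. Matching the two expansions, the $s_0$-correction terms and the iterated brackets should cancel against those coming from the action formula, with the only surviving discrepancy being a term of the form $z\cdot m$ for a fixed $m\in M$, i.e.\ an inner derivation $\delta(m)$. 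Passing to classes in $H^1(\Lh,M)$ then kills this defect and yields the derivation identity.

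The main obstacle I anticipate is bookkeeping rather than conceptual: one must carefully track where each bracket lives (in $\Le$, where $M$ is an abelian ideal so that $[s_0(z),i(m)]$ contributes the module action, versus in $\Lg/\Lh$) and confirm that every term not of the form $\delta(m)$ cancels exactly. A secondary subtlety is that the $\Lg/\Lh$-action on $H^1(\Lh,M)$ is only well-defined on classes, so before taking classes the chosen representative matters; the computation above is really an identity modulo $\Inn(\Lh,M)$, and I would make explicit that the residual inner-derivation term is precisely what is annihilated upon projecting to $H^1(\Lh,M)$. I expect the cleanest write-up to reduce everything to a single application of the Jacobi identity in $\Le$ together with the abelianness of $M$, after which the derivation property falls out by inspection.
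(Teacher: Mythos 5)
Your proposal is correct and follows essentially the same route as the paper: the paper likewise reduces to showing that $\widetilde{\rho}(\underline{e}) : \Le \rightarrow \Der(\Lh,M)$ is a derivation for the $\Le$-action defined via $p$, by verifying $d_{[y_1,y_2]} = {}^{p(y_1)}d_{y_2} - {}^{p(y_2)}d_{y_1}$ through the Jacobi identity and the fact that the $\Lg$-module structure on $M$ comes from the bracket in $\Le$. The only difference is that you budget for an inner-derivation discrepancy to be killed upon passing to $H^1(\Lh,M)$, whereas the computation in fact closes exactly on the level of $\Der(\Lh,M)$ with no residual $\delta(m)$ term.
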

\begin{proof}
It suffices to show that $\widetilde{\rho}(\underline{e}) : \mathfrak{e} \rightarrow \mbox{Der}(\mathfrak{h},M)$ is a derivation, where the $\mathfrak{e}$-action on $\mbox{Der}(\mathfrak{h},M)$ is defined via $p$. 
Take $y_1, \, y_2 \in \mathfrak{e}$. Once one realizes that the $\mathfrak{g}$-module structure on $M$ is the one induced by the Lie bracket in $\mathfrak{e}$, it is an easy calculation to show that $d_{[y_1,y_2]} = \, ^{p(y_1)}d_{y_2} - \,  ^{p(y_2)}d_{y_1}$. 
\end{proof}

This means that we can take the equivalence class $[\overline{\rho}(\underline{e})]$ of $\overline{\rho}(\underline{e})$, and consider it as an element of $H^1(\mathfrak{g}/\mathfrak{h},H^1(\mathfrak{h},M))$.
Now we will show that $[\overline{\rho}(\underline{e})]$ is independent of the chosen partial splitting $s_0$. Even stronger, by adjusting the choice of the partial splitting, we can get any derivation in the equivalence class of $\overline{\rho}(\underline{e})$. Recall that adding a derivation to a splitting yields another splitting, and that the difference of two splittings is always a derivation. 

\begin{lemma} \label{change_section}
If $s_0$ and $s_1$ are partial sections of $\underline{e}$ with $s_1=s_0 + d$ and associated maps  $\widetilde{\rho}_0(\underline{e})$, $\widetilde{\rho}_1(\underline{e}): \mathfrak{e} \rightarrow \mbox{Der}(\mathfrak{h},M)$, 
then $\widetilde{\rho}_1(\underline{e})(x)-\widetilde{\rho}_0(\underline{e})(x)=
{}^{p(x)}d$ for all $x \in \mathfrak{e}$.
\end{lemma}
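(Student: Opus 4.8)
The plan is to prove the identity by a direct computation from the defining formula for $d^{s_j}_x$, and then to recognize the resulting expression as the derivation ${}^{p(x)}d$. First I would unwind what $s_1 = s_0 + d$ means: since $p\circ s_0 = p\circ s_1 = \mathrm{id}_{\mathfrak{h}}$, the difference $s_1 - s_0$ takes values in $\ker p = i(M)$, so writing $s_1 = s_0 + d$ amounts to $s_1(z) = s_0(z) + i(d(z))$ for a map $d : \mathfrak{h}\to M$, which I identify with its image inside $\mathfrak{e}$. (By the remark preceding the lemma, such a difference of partial splittings is automatically a derivation, but this fact is not needed for the identity itself.)

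Next I would substitute into $\widetilde{\rho}_j(\underline{e})(x)(z) = d^{s_j}_x(z) = [x, s_j(z)] - s_j[p(x),z]$, where $[p(x),z]\in\mathfrak{h}$ because $\mathfrak{h}$ is an ideal of $\mathfrak{g}$. Expanding the difference and using bilinearity of the bracket in $\mathfrak{e}$, the terms $[x,s_0(z)]$ cancel and the terms $s_0[p(x),z]$ cancel, leaving
\[
d^{s_1}_x(z) - d^{s_0}_x(z) = [x, d(z)] - d[p(x),z].
\]

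The key step is the identification of the first term. The $\mathfrak{g}$-module structure on $M$ induced by $\underline{e}$ is defined by $i(w\cdot m) = [\widetilde{w}, i(m)]$ for any preimage $\widetilde{w}$ of $w$ under $p$; taking $\widetilde{w} = x$ and $w = p(x)$ gives $[x, i(d(z))] = i\bigl(p(x)\cdot d(z)\bigr)$. Hence, under the identification $M \cong i(M)$,
\[
d^{s_1}_x(z) - d^{s_0}_x(z) = p(x)\cdot d(z) - d[p(x),z],
\]
which is precisely $({}^{p(x)}d)(z)$ by the formula $({}^{w}d)(z) = w\cdot d(z) - d[w,z]$ recalled in Section \ref{der_sec_sdc}. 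As this holds for every $z\in\mathfrak{h}$, I conclude $\widetilde{\rho}_1(\underline{e})(x) - \widetilde{\rho}_0(\underline{e})(x) = {}^{p(x)}d$ for all $x\in\mathfrak{e}$.

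This is essentially a one-line calculation once the identifications are in place, so I do not anticipate a serious obstacle. The only point requiring care is the bookkeeping of the injection $i : M\hookrightarrow\mathfrak{e}$, namely keeping track of when elements of $M$ are regarded as elements of $\mathfrak{e}$ and applying the induced-action formula with the correct preimage; getting this matching right is what makes the bracket term $[x,d(z)]$ collapse exactly to $p(x)\cdot d(z)$.
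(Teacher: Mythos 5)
Your proof is correct and follows essentially the same route as the paper's: expand $d^{s_1}_x(z)-d^{s_0}_x(z)$ from the defining formula, cancel the common terms, and identify $[x,i(d(z))]=i(p(x)\cdot d(z))$ so that the remainder is $({}^{p(x)}d)(z)$. The paper compresses this to a one-line computation; your version merely makes the bookkeeping of the injection $i$ and the induced action explicit.
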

\begin{proof}
Fix $x \in \mathfrak{e}$ and write $d_x^0=\widetilde{\rho}_0(\underline{e})(x)$ and $d_x^1=\widetilde{\rho}_1(\underline{e})(x)$. It is easy to see that, for all $z \in \mathfrak{h}$, $(d_x^1 - d_x^0)(z)=[x,d(z)]-d[p(x),z]=(^{p(x)}d)(z)$.
\end{proof}

We have proven that the difference $\widetilde{\rho}_1(\underline{e})-\widetilde{\rho}_0(\underline{e})$ equals the inner derivation $\delta(d)$ in $\mbox{Der}(\mathfrak{e},\mbox{Der}(\mathfrak{h},M))$, where the $\mathfrak{e}$-action on $\mbox{Der}(\mathfrak{h},M)$ is again induced by $p$. It follows immediately that $\overline{\rho}_1(\underline{e})-\overline{\rho}_0(\underline{e})=\delta[d] \in \mbox{Der}(\mathfrak{g}/\mathfrak{h},H^1(\mathfrak{h},M))$. This means that the equivalence class $[\overline{\rho}(\underline{e})]$ is independent of the chosen section. Hence, we get a well-defined map $\hat{\rho}$, mapping an extension $\underline{e}$ to the class $[\overline{\rho}(\underline{e})] \in H^1(\mathfrak{g}/\mathfrak{h}, H^1(\mathfrak{h},M))$. It remains to show that this gives rise to a well-defined homomorphism of cohomology spaces, that will make the sequence exact. 

\begin{lemma}\label{well_natural}
Suppose that $M_1$ and $M_2$ are two $\mathfrak{g}$-modules, $\alpha : M_1 \rightarrow M_2$ is a $\mathfrak{g}$-module homomorphism, and $\underline{e}$ and $\underline{e}'$ are partially split extensions of $\mathfrak{g}$ by $M_1$ respectively $M_2$ that fit in a commutative diagram 
\[\xymatrix{\underline{e}: 0 \ar[r] & M_1 \ar[d]^{\alpha}\ar[r]^{i} & \mathfrak{e} \ar[d]^{\beta} \ar[r]^{p} & \mathfrak{g} \ar[r]& 0\\ \underline{e}': 0 \ar[r] & M_2 \ar[r]^{i'} & \mathfrak{e}' \ar[r]^{p'} & \ar@{=}[u] \mathfrak{g} \ar[r]& 0,}\]
where $\beta$ is a Lie algebra morphism.
Then $\alpha_* (\hat{\rho}(\underline{e}))=\hat{\rho}(\underline{e}')$, where $\alpha_*$ is the induced map $H^1(\mathfrak{g}/\mathfrak{h},H^1(\mathfrak{h},M_1)) \rightarrow H^1(\mathfrak{g}/\mathfrak{h},H^1(\mathfrak{h},M_2))$. 
\end{lemma}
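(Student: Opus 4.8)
The plan is to exploit the freedom, established just above, to compute $\hat{\rho}$ from \emph{any} partial splitting, and to choose compatible splittings on the two extensions. First I would fix a partial splitting $s_0 : \mathfrak{h} \to \mathfrak{e}$ of $\underline{e}$ and set $s_0' = \beta \circ s_0 : \mathfrak{h} \to \mathfrak{e}'$. Since $\beta$ is a Lie algebra morphism and $s_0$ is a partial splitting, $s_0'$ is again a Lie algebra morphism, and $p' \circ s_0' = p' \circ \beta \circ s_0 = p \circ s_0 = \mathrm{id}_{\mathfrak{h}}$ by commutativity of the right-hand square, so $s_0'$ is a partial splitting of $\underline{e}'$. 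By the independence of $\hat{\rho}$ from the chosen section (Lemma \ref{change_section} and the discussion following it), I may use $s_0$ to compute $\hat{\rho}(\underline{e})$ and $s_0'$ to compute $\hat{\rho}(\underline{e}')$.

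The key computation compares the two families of derivations. For $x \in \mathfrak{e}$ and $z \in \mathfrak{h}$ I would compute, using that $\beta$ is a Lie morphism, that $p' \circ \beta = p$, and that $s_0' = \beta \circ s_0$,
\[
d^{s_0'}_{\beta(x)}(z) = [\beta(x), s_0'(z)] - s_0'[p'(\beta(x)), z] = \beta\bigl([x, s_0(z)] - s_0[p(x), z]\bigr) = \beta\bigl(d^{s_0}_x(z)\bigr).
\]
Since $d^{s_0}_x(z)$ lies in $i(M_1)$ and $\beta \circ i = i' \circ \alpha$, the restriction of $\beta$ to $i(M_1)$ is $\alpha$ under the identifications $M_1 \cong i(M_1)$ and $M_2 \cong i'(M_2)$; hence $d^{s_0'}_{\beta(x)} = \alpha \circ d^{s_0}_x = \alpha_*\bigl(d^{s_0}_x\bigr)$ as derivations $\mathfrak{h} \to M_2$.

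To finish, I would descend to the quotient. Given $\overline{u} \in \mathfrak{g}/\mathfrak{h}$, pick $x \in \mathfrak{e}$ with $\pi \circ p(x) = \overline{u}$; then $\pi \circ p'(\beta(x)) = \overline{u}$ as well, so by the definition of $\overline{\rho}$ the previous identity gives $\overline{\rho}(\underline{e}')(\overline{u}) = [\alpha_*(d^{s_0}_x)] = \alpha_*\bigl(\overline{\rho}(\underline{e})(\overline{u})\bigr)$, i.e.\ $\overline{\rho}(\underline{e}') = \alpha_* \circ \overline{\rho}(\underline{e})$ as derivations $\mathfrak{g}/\mathfrak{h} \to H^1(\mathfrak{h}, M_2)$, where $\alpha_*$ denotes post-composition with the induced map $H^1(\mathfrak{h}, M_1) \to H^1(\mathfrak{h}, M_2)$. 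Taking cohomology classes then yields $\hat{\rho}(\underline{e}') = \alpha_*(\hat{\rho}(\underline{e}))$, which is the claim. The proof is essentially routine; the one point requiring care, and thus the main (though modest) obstacle, is to verify that $\alpha_* : H^1(\mathfrak{h}, M_1) \to H^1(\mathfrak{h}, M_2)$ is a morphism of $\mathfrak{g}/\mathfrak{h}$-modules, so that it indeed induces the map $\alpha_*$ on $H^1(\mathfrak{g}/\mathfrak{h}, -)$ appearing in the statement. This follows from $\alpha$ being a $\mathfrak{g}$-module map, since a direct check gives $\alpha_*({}^{x}d) = {}^{x}(\alpha \circ d)$ for all $x \in \mathfrak{g}$ and $d \in \Der(\mathfrak{h}, M_1)$.
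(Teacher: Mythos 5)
Your proposal is correct and follows essentially the same route as the paper: choose a partial splitting $s_0$ of $\underline{e}$, transport it to $s_0'=\beta\circ s_0$ on $\underline{e}'$, and use the identity $d^{s_0'}_{\beta(x)}=\alpha\circ d^{s_0}_x$ (via $p'\circ\beta=p$ and $\beta\circ i=i'\circ\alpha$) together with the fact that $\beta(x)$ is a valid pre-image of $\pi\circ p(x)$. Your closing check that $\alpha_*$ is a $\mathfrak{g}/\mathfrak{h}$-module map is a worthwhile extra detail the paper leaves implicit, but it does not change the argument.
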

Observe that $\underline{e}'$ makes the diagram commutative if and only if the left hand square is a push-out construction, and equivalently, $[\underline{e}']=H^2(\mathbb{1}, \alpha)[\underline{e}]$.
This follows from Proposition \ref{exact_is_poc}.

\begin{proof}
Choose a partial splitting $s : \mathfrak{h} \rightarrow \mathfrak{e}$ for $\underline{e}$ and take the partial splitting $s'=\beta \circ s$ for $\underline{e}'$.
Take $x \in \mathfrak{g}/\mathfrak{h}$ and choose a pre-image $y \in \mathfrak{e}$ under $\pi \circ p$. This means that $\hat{\rho}(\underline{e})(x)=[d_y]$, and $\alpha_*(\hat{\rho}(\underline{e}))(x)=[\alpha \circ d_y]$, with $i'\circ \alpha \circ d_y (z)=i ' \circ \alpha ([y,s(z)] - s[py,z]) = 
 [\beta y, s' (z)]-s'[p(y),z]$ for $z \in \mathfrak{h}$.
On the other hand, we can choose $\beta y \in \mathfrak{e}'$ as a pre-image of $x$ under $\pi \circ p'$, and $\hat{\rho}(\underline{e}')(x)=[d_{\beta y}]$ with $i'\circ d_{\beta y}(z)=[\beta y,s'(z)]-s'[p'(\beta y),z]$ for $z \in \mathfrak{h}$. Since $p' \circ \beta=p$, this finishes the proof.
\end{proof}

We state the following immediate consequences.
\begin{corollary}
The map $\hat{\rho}$ yields a well-defined map $\rho : H^2(\mathfrak{g},M)_1  \rightarrow  H^1(\mathfrak{g}/\mathfrak{h},H^1(\mathfrak{h},M))$.
\end{corollary}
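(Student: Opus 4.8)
The plan is to show that the extension-level map $\hat{\rho}$ depends only on the equivalence class of $\underline{e}$ in $H^2(\mathfrak{g},M)_1$, so that it descends to a map $\rho$ on cohomology. Two of the three ingredients are already in place: the class $[\overline{\rho}(\underline{e})]$ does not depend on the chosen partial splitting $s_0$ (by Lemma \ref{change_section} and the remarks following it), and $\hat{\rho}$ is natural in the sense of Lemma \ref{well_natural}. The only point left to verify is that two equivalent extensions have the same image under $\hat{\rho}$.

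So suppose $\underline{e}$ and $\underline{e}'$ are partially split extensions of $\mathfrak{g}$ by $M$ representing the same class in $H^2(\mathfrak{g},M)_1$, i.e.\ $[\underline{e}]=[\underline{e}']$. By the definition of equivalence recalled in Section \ref{der_sec_sdc}, there is a Lie algebra map $\beta : \mathfrak{e} \to \mathfrak{e}'$ fitting into a commutative diagram with the identity on $M$ and on $\mathfrak{g}$ (and $\beta$ is in fact an isomorphism by the five-lemma, though we do not even need this). This is precisely the situation of Lemma \ref{well_natural} with $M_1=M_2=M$ and $\alpha=\mathbb{1}_M$. Applying that lemma yields $(\mathbb{1}_M)_*\bigl(\hat{\rho}(\underline{e})\bigr)=\hat{\rho}(\underline{e}')$. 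Since $\alpha=\mathbb{1}_M$ induces the identity on $H^1(\mathfrak{g}/\mathfrak{h},H^1(\mathfrak{h},M))$, this reads $\hat{\rho}(\underline{e})=\hat{\rho}(\underline{e}')$.

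Hence $\hat{\rho}$ is constant on equivalence classes, and the assignment $[\underline{e}] \mapsto \hat{\rho}(\underline{e})$ gives a well-defined map $\rho : H^2(\mathfrak{g},M)_1 \to H^1(\mathfrak{g}/\mathfrak{h},H^1(\mathfrak{h},M))$. There is essentially no obstacle to overcome here: the substantive work has been front-loaded into the independence-of-splitting argument and into Lemma \ref{well_natural}, and the corollary is merely the specialization of that lemma to the case $\alpha=\mathbb{1}_M$. The one thing worth recording explicitly is the observation that an equivalence of extensions is exactly an instance of the commutative diagram appearing in Lemma \ref{well_natural} with the two vertical module maps equal to the identity.
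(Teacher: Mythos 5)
Your proposal is correct and matches the paper's own argument exactly: the paper likewise observes that an equivalence of extensions is the case $\alpha=\mathbb{1}_M$ of Lemma \ref{well_natural}, which gives $\hat{\rho}(\underline{e})=\hat{\rho}(\underline{e}')$, with independence of the partial splitting already settled before the corollary. Nothing further is needed.
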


\begin{proof}
If two extensions $\underline{e}$ and $\underline{e}'$ are equivalent, we can use Lemma \ref{well_natural} with $\alpha = \mathbb{1}_M$ to see that $\hat{\rho}(\underline{e})=\hat{\rho}(\underline{e}')$.
\end{proof}

\begin{corollary}\label{rho_natural}
The map $\rho$ is natural with respect to the modules.
\end{corollary}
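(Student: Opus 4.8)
The plan is to deduce naturality directly from Lemma \ref{well_natural}, which already contains all the computational content; the corollary is essentially a reinterpretation of that lemma in terms of the induced maps on cohomology. Concretely, naturality with respect to the modules means that for every $\mathfrak{g}$-module homomorphism $\alpha : M_1 \to M_2$ the square
\[\xymatrix{H^2(\mathfrak{g},M_1)_1 \ar[r]^-{\rho} \ar[d]^{\alpha_*} & H^1(\mathfrak{g}/\mathfrak{h},H^1(\mathfrak{h},M_1)) \ar[d]^{\alpha_*}\\ H^2(\mathfrak{g},M_2)_1 \ar[r]^-{\rho} & H^1(\mathfrak{g}/\mathfrak{h},H^1(\mathfrak{h},M_2))}\]
commutes, where the vertical arrows are the maps induced by $\alpha$ on the respective cohomology spaces.

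First I would fix a class $[\underline{e}] \in H^2(\mathfrak{g},M_1)_1$ and represent it by a partially split extension $\underline{e}$ with partial splitting $s : \mathfrak{h} \to \mathfrak{e}$. I then form the push-out construction $\underline{e}'$ of $\underline{e}$ along $\alpha$, together with the induced Lie algebra map $\beta : \mathfrak{e} \to \mathfrak{e}'$. By Proposition \ref{exact_is_poc} (and the remark following Lemma \ref{well_natural}) the resulting diagram is exactly the one appearing in the hypothesis of Lemma \ref{well_natural}, and $[\underline{e}'] = \alpha_*[\underline{e}]$ in $H^2(\mathfrak{g},M_2)$. Before invoking the lemma I would also check that $\alpha_*$ genuinely lands in $H^2(\mathfrak{g},M_2)_1$: the composite $s' = \beta \circ s$ satisfies $p' \circ s' = p' \circ \beta \circ s = p \circ s = \mathbb{1}_{\mathfrak{h}}$, so $\underline{e}'$ is again partially split.

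With the diagram in place, Lemma \ref{well_natural} yields $\alpha_*(\hat{\rho}(\underline{e})) = \hat{\rho}(\underline{e}')$. Since by construction $\rho[\underline{e}] = [\overline{\rho}(\underline{e})] = \hat{\rho}(\underline{e})$ and $\hat{\rho}(\underline{e}') = \rho[\underline{e}'] = \rho(\alpha_*[\underline{e}])$, this identity reads $\alpha_*(\rho[\underline{e}]) = \rho(\alpha_*[\underline{e}])$, which is precisely the asserted commutativity. Because the computational work --- tracing how the derivations $d_x$ transform under $\alpha$ --- was already carried out in the proof of Lemma \ref{well_natural}, there is no genuine obstacle here; the only point demanding care is the bookkeeping that identifies the push-out extension with a representative of $\alpha_*[\underline{e}]$ and verifies that the hypotheses of Lemma \ref{well_natural} are met.
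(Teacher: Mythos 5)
Your proposal is correct and follows essentially the same route as the paper: both reduce the corollary to Lemma \ref{well_natural} by realizing $\alpha_*[\underline{e}]$ as the push-out extension $\underline{e}'$ and reading off the commutativity of the square from the identity $\alpha_*(\hat{\rho}(\underline{e}))=\hat{\rho}(\underline{e}')$. Your extra check that $\underline{e}'$ is again partially split via $s'=\beta\circ s$ is a welcome detail the paper leaves implicit.
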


\begin{proof}
We have to show that for every $\Lg$-module map $\alpha : M_1 \rightarrow M_2$, the diagram
\[\xymatrix{H^2(\Lg,M_1)_1 \ar[r]^-{\rho} \ar[d]^{\alpha_*} &  H^1(\Lg/ \Lh, H^1(\Lh, M_1)) \ar[d]^{\alpha_*} \\ H^2(\Lg, M_2) \ar[r]^-{\rho} & H^1(\Lg/ \Lh, H^1(\Lh, M_2))}\]
commutes. Take $\underline{e}$ and $\underline{e}'$ as in Lemma \ref{well_natural}. Then it is clear that $[\underline{e}']=\alpha_*[\underline{e}]$. The lemma shows that $\hat{\rho}(\underline{e}')=\alpha_*( \hat{\rho}(\underline{e}))$, which means exactly that the diagram commutes.
\end{proof}

\begin{proposition}\label{rho_good}
The map $\rho : H^2(\mathfrak{g},M)_1 \rightarrow H^1(\mathfrak{g}/\mathfrak{h},H^1(\mathfrak{h},M))$ is a linear map of vector spaces.
\end{proposition}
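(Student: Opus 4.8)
The plan is to prove Proposition \ref{rho_good} in direct parallel with the proof of Proposition \ref{tr_good}, exploiting the naturality established in Corollary \ref{rho_natural}. We already know from Corollary \ref{tr_group_hom}-style reasoning (combined with Lemma \ref{they_preserve} and Lemma \ref{homomorphism_of_group_objects}) that $\rho$ preserves addition, so the only thing left to establish is compatibility with scalar multiplication by an arbitrary $\lambda \in k$. The strategy is to realize scalar multiplication, on both the source and the target, as the map induced by a single $\Lg$-module endomorphism of $M$, and then to invoke naturality of $\rho$ to interchange the two.

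First I would fix $\lambda \in k$ and consider the scaling map $\lambda : M \rightarrow M$, $m \mapsto \lambda \cdot m$. Since the $\Lg$-action on $M$ is $k$-linear, this is a $\Lg$-module homomorphism. Next I would observe that the induced maps on cohomology, namely $\lambda_* : H^2(\Lg,M)_1 \rightarrow H^2(\Lg,M)_1$ and $\lambda_* : H^1(\Lg/\Lh, H^1(\Lh,M)) \rightarrow H^1(\Lg/\Lh, H^1(\Lh,M))$, are precisely multiplication by $\lambda$ on these $k$-vector spaces; this is the standard fact that a scalar endomorphism of the coefficient module induces scalar multiplication on cohomology, valid on each of the nested cohomology functors here. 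I should note that $\lambda_*$ preserves the subspace $H^2(\Lg,M)_1$ because restriction to $\Lh$ commutes with $\lambda_*$, so the source of $\rho$ is sent into itself.

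With these two observations in place, the proof collapses into a single diagram chase. By Corollary \ref{rho_natural}, $\rho$ is natural with respect to $\Lg$-module maps, so applying naturality to the endomorphism $\lambda : M \rightarrow M$ yields the commutative square
\[\xymatrix{H^2(\Lg,M)_1 \ar[r]^-{\rho} \ar[d]^{\lambda_*} & H^1(\Lg/\Lh, H^1(\Lh,M)) \ar[d]^{\lambda_*}\\ H^2(\Lg,M)_1 \ar[r]^-{\rho} & H^1(\Lg/\Lh, H^1(\Lh,M)).}\]
Reading this square as $\rho(\lambda \cdot \xi) = \rho(\lambda_* \xi) = \lambda_* \rho(\xi) = \lambda \cdot \rho(\xi)$ for every $\xi \in H^2(\Lg,M)_1$ gives exactly the compatibility of $\rho$ with scalar multiplication. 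Combined with additivity, this shows $\rho$ is $k$-linear, completing the proof.

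I do not expect any genuine obstacle here, since the argument is a verbatim adaptation of the $\tr$ case. The only point that deserves care is the identification of $\lambda_*$ with scalar multiplication on the \emph{iterated} cohomology space $H^1(\Lg/\Lh, H^1(\Lh,M))$; one must check that applying $\lambda_*$ to the inner functor $H^1(\Lh, -)$ and then to the outer functor $H^1(\Lg/\Lh, -)$ indeed composes to ordinary multiplication by $\lambda$, and that the $\Lg/\Lh$-module structures match up so that $\lambda_*$ is a well-defined endomorphism at the outer level. This is routine but is the one place where the nesting of functors, absent in the $\tr$ argument, must be verified rather than merely asserted.
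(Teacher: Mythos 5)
Your proposal is correct and follows exactly the paper's own route: additivity via Lemma \ref{they_preserve} and Lemma \ref{homomorphism_of_group_objects}, and scalar compatibility by applying the naturality of $\rho$ (Corollary \ref{rho_natural}) to the scaling endomorphism $\lambda : M \rightarrow M$, just as in the proof of Proposition \ref{tr_good}. Your extra remark about verifying that $\lambda_*$ acts as scalar multiplication on the iterated space $H^1(\Lg/\Lh, H^1(\Lh,M))$ is a sensible point of care that the paper leaves implicit.
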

The fact that $\rho$ preserves sums is now easily proven using Lemma \ref{they_preserve} and Lemma \ref{homomorphism_of_group_objects}. Linearity follows from Lemma \ref{rho_natural} in the same way as in the proof of Proposition \ref{tr_good}.

\begin{lemma}
The sequence 
\[\xymatrix{\ldots \ar[r] & H^1(\mathfrak{h},M)^{\mathfrak{g}/\mathfrak{h}} \ar[r]^{\tr} & H^2(\mathfrak{g}/\mathfrak{h},M^{\mathfrak{h}}) \ar[r]^{\infl} & H^2(\mathfrak{g},M)_1 \ar[r]^{\rho \ \ \ \ \ \ } & H^1(\mathfrak{g}/\mathfrak{h},H^1(\mathfrak{h},M))}\]
is exact.
\end{lemma}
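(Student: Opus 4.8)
The plan is to prove exactness at both displayed spots, but only one is new work. Exactness at $H^2(\mathfrak{g}/\mathfrak{h},M^{\mathfrak{h}})$, i.e. $\Image\tr = \Ker\infl$, is already established in the earlier corollary stating that $H^1(\mathfrak{h},M)^{\mathfrak{g}/\mathfrak{h}} \to H^2(\mathfrak{g}/\mathfrak{h},M^{\mathfrak{h}}) \to H^2(\mathfrak{g},M)$ is exact; since the image of $\infl$ is contained in $H^2(\mathfrak{g},M)_1$, replacing the target $H^2(\mathfrak{g},M)$ by $H^2(\mathfrak{g},M)_1$ changes neither the image of $\tr$ nor the kernel of $\infl$. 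So the real content is exactness at $H^2(\mathfrak{g},M)_1$, namely $\Image\infl = \Ker\rho$, and the whole argument is organised around translating each cohomological condition into the single question of whether the derivations $d^s_x$ attached to a partial splitting $s$ are inner. The decisive tool is Corollary \ref{cor_image_infl}: for $[\underline{e}] \in H^2(\mathfrak{g},M)_1$ one has $[\underline{e}] \in \Image\infl$ if and only if there is \emph{one} partial splitting $s$ with $d^s_x$ inner for all $x \in \mathfrak{e}$, which is precisely the condition making $\overline{\rho}(\underline{e})$ vanish identically when computed with that splitting.

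For $\Image\infl \subseteq \Ker\rho$: given $[\underline{e}] = \infl[\underline{e}']$, Corollary \ref{cor_image_infl} supplies a partial splitting $s_0$ with $d^{s_0}_x$ inner for every $x$. Since $\rho$ is independent of the chosen partial splitting, I would compute $\rho[\underline{e}]$ using exactly this $s_0$. Then for each $\bar x \in \mathfrak{g}/\mathfrak{h}$, choosing a preimage $y$, one gets $\overline{\rho}(\underline{e})(\bar x) = [d^{s_0}_y] = 0$, so $\overline{\rho}(\underline{e})$ is the zero derivation and $\rho[\underline{e}] = [\overline{\rho}(\underline{e})] = 0$.

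For the reverse inclusion $\Ker\rho \subseteq \Image\infl$, which carries the weight: take $[\underline{e}] \in H^2(\mathfrak{g},M)_1$ with $\rho[\underline{e}] = 0$. Fix any partial splitting $s_0$. The hypothesis says $\overline{\rho}_0(\underline{e})$ is an inner derivation, hence $\overline{\rho}_0(\underline{e}) = \delta[d]$ for some derivation $d : \mathfrak{h} \to M$, i.e. $\overline{\rho}_0(\underline{e})(\bar x) = [{}^{x}d]$. The idea is to absorb this coboundary into the splitting. By Lemma \ref{change_section}, replacing $s_0$ by $s_1 = s_0 - d$ shifts $\widetilde{\rho}$ by $-{}^{p(x)}d$, so that $\overline{\rho}_1(\underline{e})(\bar x) = [{}^{x}d] - [{}^{x}d] = 0$ for every $\bar x$. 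Thus $d^{s_1}_y$ is inner for all $y \in \mathfrak{e}$, and Corollary \ref{cor_image_infl} delivers $[\underline{e}] \in \Image\infl$.

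The main obstacle is this last inclusion, and specifically the upgrade from a vanishing cohomology class to a vanishing map: a priori $\rho[\underline{e}] = 0$ only tells us that $\overline{\rho}$ is inner for one particular splitting, whereas Corollary \ref{cor_image_infl} requires a splitting for which all the $d^s_x$ are genuinely inner derivations. Lemma \ref{change_section} is exactly the bridge, since it identifies the ambiguity in $\overline{\rho}$ under change of splitting with the space of inner derivations $\delta[d]$, allowing the representing derivation $d$ to be transferred onto the splitting. The only care needed is to verify that modifying $s_0$ by the derivation $d$ forces $\widetilde{\rho}_1(x)$ to represent an inner derivation for \emph{every} $x \in \mathfrak{e}$, not merely for a chosen set of preimages of $\mathfrak{g}/\mathfrak{h}$; this is automatic because the formula in Lemma \ref{change_section} holds for all $x \in \mathfrak{e}$ and because $\widetilde{\rho}$ is itself a derivation $\mathfrak{e} \to \Der(\mathfrak{h},M)$, so vanishing modulo inner derivations on representatives propagates to all of $\mathfrak{e}$.
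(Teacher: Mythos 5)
Your proposal is correct and follows essentially the same route as the paper: both directions hinge on Corollary \ref{cor_image_infl}, and the key step $\Ker\rho\subseteq\Image\infl$ is handled exactly as in the paper by using Lemma \ref{change_section} to absorb the representing derivation $d$ into the partial splitting so that $\overline{\rho}$ vanishes identically (the paper compresses this into a reference to the remarks surrounding Lemmas \ref{rho_pre_image} and \ref{change_section}, which you simply spell out). The only cosmetic point is that the cleanest justification for "inner for all $x\in\Le$, not just chosen preimages" is Lemma \ref{rho_pre_image} (the class $[d_x]$ depends only on $\pi\circ p(x)$) rather than the derivation property of $\widetilde{\rho}$, but your conclusion is unaffected.
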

\begin{proof}
If the extension class $[\underline{e}]$ is in the image of $\infl$, we choose the splitting $s$ from Corollary \ref{cor_image_infl}. It is then immediate that $\overline{\rho}(\underline{e})$ with respect to $s$ is trivial, so $\rho [\underline{e}]=0$. 
Conversely, take $\underline{e}$ such that $\rho[\underline{e}]$=0. By the remarks preceding Lemma \ref{change_section}, we can assume that $\overline{\rho}(\underline{e}) = 0$. In this case, by the remarks following Lemma \ref{rho_pre_image}, there is a splitting $s$ of $\underline{e}$ such that $d^s_x$ is an inner derivation for all $x \in \mathfrak{e}$. Using Corollary \ref{cor_image_infl}, we see that $[\underline{e}] \in \Image (\infl)$.
\end{proof}

\section{Interpretation of the third cohomology group}\label{third_cohom}
To construct the map $H^1(\mathfrak{g}/\mathfrak{h}, H^1(\mathfrak{h},M)) \rightarrow H^3(\Lg / \Lh, M^{\Lh})$ explicitly, we need to have an interpretation of the third cohomology group for Lie algebras. We give here a short overview; a more detailed version can for example be found in \cite[Section 1]{wage06-1}. 

We first need to define crossed modules of Lie algebras, as introduced in the appendix of \cite{kl82-1}. A \emph{crossed module} of Lie algebras consists of a Lie algebra homomorphism $\delta : \mathfrak{m} \rightarrow \mathfrak{n}$, together with a Lie algebra action of $\Ln$ on $\Lm$ (i.e.~a Lie algebra map $\mathfrak{n} \rightarrow \mbox{Der}(\mathfrak{m})$), where the action on an element will be noted as $^x y$, such that 
\begin{itemize}
	\item $\delta(^n m)=[n, \delta(m)]$ for all $n \in \mathfrak{n}$ and $m\in \mathfrak{m}$;
	\item $^{\delta(m)} m'=[m,m']$ for all $m, \ m' \in \mathfrak{m}$.
\end{itemize}
It is an immediate consequence of the definition that $\Image \delta$ is an ideal of $\mathfrak{n}$, $M = \Ker \delta$ is central in $\mathfrak{m}$ and therefore abelian, and that the action of $\mathfrak{n}$ on $\mathfrak{m}$ induces an action of the cokernel $\mathfrak{q}=\mathfrak{n}/\Image \delta$ on $M$. 
A homomorphism of crossed modules is a pair $(\lambda, \mu) : (\Lm \overset{\delta}{\rightarrow} \Ln) \rightarrow (\Lm' \overset{\delta'}{\rightarrow} \Ln')$, where $\lambda : \Lm \rightarrow \Lm'$ and $\mu : \Ln \rightarrow \Ln'$ are Lie algebra maps such that the diagram \[\xymatrix{\Lm \ar[d]^{\lambda} \ar[r]^{\delta} & \Ln \ar[d]^{\mu}\\ \Lm' \ar[r]^{\delta'} & \Ln'}\] commutes and $\lambda$ preserves the action, i.e.~ $\lambda({}^x y) = {}^{\mu(x)} \lambda(y)$.

A \emph{crossed extension} of $\Lq$ by $M$ is an exact sequence of Lie algebras
\[\underline{e} : \xymatrix{0 \ar[r] & M \ar[r]^{i} & \Lm \ar[r]^{\delta} & \Ln \ar[r]^{\pi} & \Lq \ar[r] & 0, }\] where $M$ is an abelian Lie algebra and $\delta : \Lm \rightarrow \Ln$ is a crossed module. The crossed module induces a Lie algebra action of $\Lq$ on $M$. 
We define an equivalence relation on the crossed extensions, as follows. If $\underline{e}$ and $\underline{e}'$ are two crossed extensions with kernel $M$ and quotient $\Lq$, and there is a crossed module map $(\lambda, \mu)$ making the diagram
\begin{equation}\label{equivalence} \xymatrix{\underline{e} : 0 \ar[r] & M \ar[r]^{i} \ar@{=}[d] & \Lm \ar[r]^{\delta} \ar[d]^{\lambda} & \Ln \ar[d]^{\mu} \ar[r]^{\pi} & \Lq \ar@{=}[d] \ar[r] & 0 \\\underline{e}': 0 \ar[r] & M \ar[r]^{i'} & \Lm' \ar[r]^{\delta'} & \Ln' \ar[r]^{\pi'} & \Lq \ar[r] & 0 }\end{equation} commute, then the crossed extensions are equivalent. However, since $\lambda$ and $\mu$ don't need to be isomorphisms, we have to take the equivalence relation generated by this notion. This means that two crossed extensions $\underline{e}$ and $\underline{e}'$ are equivalent if and only if there exists a series of crossed extensions $\underline{e}=\underline{e}_0$, $\underline{e}_1$, \ldots, $\underline{e}_n = \underline{e}'$ where the adjacent extensions are connected by crossed module maps, creating commutative diagrams as (\ref{equivalence}). 

In \cite[Section 1]{wage06-1}, Wagemann uses an explicit construction of the cocycle to prove that the equivalence classes of crossed extensions of $\Lq$ by $M$ are indeed in bijection with the third homology class $H^3(\Lq,M)$.
The first (non-explicit) proofs of this correspondence can be found in \cite{gers64-1} and \cite{gers66-1}. 

Just as in Proposition \ref{exact_is_pb} and \ref{exact_is_poc}, we can use the pull-back and the push-out construction to describe the induced maps on cohomology. Take a crossed extension $\underline{e}$ as above. We can decompose $\underline{e}$ into a Lie algebra extension with trivial induced action
\[\underline{e}_1: \xymatrix{0 \ar[r] & M \ar[r]^{i} & \Lm \ar[r] & \Ker \pi \ar[r] & 0,}\]
and an extension of Lie algebras with non-abelian kernel
\[\underline{e}_2 : \xymatrix{0 \ar[r] & \Ker \pi \ar[r] & \Ln \ar[r] & \Lq \ar[r] & 0.}\] The induced maps $\alpha^* : H^3(\Lq, M) \rightarrow H^3(\Lq',M)$ and  $\beta_* : H^3(\Lq, M) \rightarrow H^3(\Lq, M')$ on cohomology can be constructed like this: we alter $\underline{e}_2$ respectively $\underline{e}_1$ like in Section \ref{pb_po}, and then paste it together with the other part again. One can state similar results as in Section \ref{pb_po}, but we leave it to the reader to make it explicit.

\section{Construction of $\lambda$}\label{lambda}

We are now ready to construct the map $\lambda: H^1(\Lg / \Lh,H^1(\Lh,M)) \to H^3(\Lg /\Lh,M^{\Lh})$. For every Lie algebra $\mathfrak{g}$ and ideal $\mathfrak{h}$, and every $\mathfrak{g}$-module $M$, we can make a crossed extension
\[ \xymatrix{0 \ar[r] & M^{\mathfrak{h}} \ar[r] & M \rtimes \mathfrak{h} \ar[r]^-{\gamma} & \mbox{Der}(\mathfrak{h},M) \rtimes \mathfrak{g} \ar[r]^-{\Pi} & H^1(\mathfrak{h},M) \rtimes \mathfrak{g}/\mathfrak{h} \ar[r] & 0,}\] where the middle terms form a crossed module by putting $^{(d,x)}(m,z)=(x \cdot m + d(z), [x,z])$ and $\gamma(m,z)=(-\delta(m),z)$ with $\delta(m)(z)=z \cdot m$. The map $\Pi$ comes from the natural projection maps. It is straight-forward to check that this forms indeed a crossed extension. 

Given a derivation $D : \mathfrak{g}/\mathfrak{h} \rightarrow H^1(\mathfrak{h},M)$, we consider the splitting $s_D : \mathfrak{g}/\mathfrak{h} \rightarrow H^1(\mathfrak{h},M) \rtimes \mathfrak{g}/\mathfrak{h}$, mapping $x$ to $(D(x),x)$. Taking the pull-back $\mathfrak{p}$ of $s_D$ and $\Pi$, it is easy to check that we get an exact sequence
\[\underline{e}_D : \xymatrix{0 \ar[r] & M^{\mathfrak{h}} \ar[r] & M \rtimes \mathfrak{h} \ar[r]^-{\gamma} & \mathfrak{p} \ar[r] & \mathfrak{g}/\mathfrak{h} \ar[r] & 0,}\] that is a crossed extension of $\mathfrak{g}/\mathfrak{h}$ by $M$, and the induced action on $M^{\Lh}$ coincides with the $\Lg / \Lh$-action on $M^{\Lh}$ given by the module action of $\Lg$ on $M$. Observe that $\mathfrak{p}$ is nothing but the pre-image of $\Image (s_D)$ under $\Pi$, since $s_D$ is injective. It consists of all couples $(d,x)$ with $D(x+\mathfrak{h})=[d]$. We define the map \[\hat{\lambda}: \mbox{Der}(\mathfrak{g}/\mathfrak{h},H^1(\mathfrak{h},M)) \rightarrow H^3(\mathfrak{g}/\mathfrak{h},M^{\mathfrak{h}})\] by $\hat{\lambda}(D)=[\underline{e}_D]$. We have to show that this yields a well-defined homomorphism on cohomology level, that is natural in the modules and makes the seven-term sequence exact.

\begin{lemma}
The map $\hat{\lambda}$ yields a well-defined map $\lambda:H^1(\mathfrak{g}/\mathfrak{h},H^1(\mathfrak{h},M)) \rightarrow H^3(\mathfrak{g}/\mathfrak{h},M^{\mathfrak{h}})$.
\end{lemma}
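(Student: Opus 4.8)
The group $H^1(\Lg/\Lh,H^1(\Lh,M))$ is by definition the quotient $\Der(\Lg/\Lh,H^1(\Lh,M))/\Inn(\Lg/\Lh,H^1(\Lh,M))$, so the plan is to show that $\hat\lambda$ is constant on the cosets of the subgroup of inner derivations; it then factors through the quotient and defines $\lambda$. Concretely, it suffices to prove that $\hat\lambda(D_0)=\hat\lambda(D_1)$ whenever $D_1=D_0+\delta(c)$ for some $c=[d_0]\in H^1(\Lh,M)$, where $\delta(c)$ is the inner derivation $x\mapsto x\cdot c$. I fix once and for all a representative $d_0\in\Der(\Lh,M)$ of $c$.

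Write $\Lp_0$ and $\Lp_1$ for the pull-back algebras used to build $\underline{e}_{D_0}$ and $\underline{e}_{D_1}$; recall that $\Lp_i=\{(d,x)\in\Der(\Lh,M)\rtimes\Lg \mid D_i(x+\Lh)=[d]\}$. The key idea is to produce an explicit crossed-module morphism between the two crossed extensions which is the identity on $M^{\Lh}$ and on $\Lg/\Lh$; by the equivalence relation defining $H^3$ this forces $[\underline{e}_{D_0}]=[\underline{e}_{D_1}]$. I would define $\Phi:\Lp_0\to\Lp_1$ by $\Phi(d,x)=(d+{}^x d_0,\,x)$ and $\lambda:M\rtimes\Lh\to M\rtimes\Lh$ by $\lambda(m,z)=(m-d_0(z),\,z)$, and claim that $(\lambda,\Phi)$ is the desired morphism.

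The first thing to check is that $\Phi$ is well defined and lands in $\Lp_1$: since $[d]=D_0(x+\Lh)$ and $[{}^x d_0]=(x+\Lh)\cdot c$, one gets $[d+{}^x d_0]=D_0(x+\Lh)+\delta(c)(x+\Lh)=D_1(x+\Lh)$. That $\Phi$ is a Lie-algebra homomorphism reduces to the identity ${}^{[x,y]}d_0={}^x({}^y d_0)-{}^y({}^x d_0)$, which is exactly the statement that $x\mapsto{}^x(-)$ is an action of $\Lg$ on $\Der(\Lh,M)$; its inverse is $(d,x)\mapsto(d-{}^x d_0,x)$, so $\Phi$ is an isomorphism compatible with the projections to $\Lg/\Lh$. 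For the left-hand square I need $\gamma\circ\lambda=\Phi\circ\gamma$, and the crucial computation is that for $z\in\Lh$ the derivation ${}^z d_0$ is \emph{inner}: using the derivation property of $d_0$ one finds $({}^z d_0)(z')=z'\cdot d_0(z)$, i.e.\ ${}^z d_0=\delta(d_0(z))$. This is precisely what makes $\lambda(m,z)=(m-d_0(z),z)$ a well-defined self-map of $M\rtimes\Lh$ satisfying $\gamma(\lambda(m,z))=(-\delta(m)+{}^z d_0,z)=\Phi(\gamma(m,z))$, and it shows that $\lambda$ restricts to the identity on $M^{\Lh}=\Ker\gamma$ (because $d_0(0)=0$).

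It then remains to verify the routine conditions making $(\lambda,\Phi)$ a genuine crossed-module morphism, namely that $\lambda$ is a Lie-algebra homomorphism of $M\rtimes\Lh$ and that it is equivariant, $\lambda({}^{u}w)={}^{\Phi(u)}\lambda(w)$, for the crossed-module actions. I expect the equivariance of $\lambda$ to be the main obstacle, since it mixes the action formula ${}^{(d,x)}(m,z)=(x\cdot m+d(z),[x,z])$ with the correction term $-d_0(z)$; the verification should again come down to the derivation identity for $d_0$ together with the action axioms, but it is the one step requiring genuine bookkeeping. Once this is in place, $(\lambda,\Phi)$ exhibits $\underline{e}_{D_0}$ and $\underline{e}_{D_1}$ as equivalent crossed extensions, so $\hat\lambda$ is constant on inner-derivation cosets and descends to the claimed map $\lambda$ on $H^1(\Lg/\Lh,H^1(\Lh,M))$.
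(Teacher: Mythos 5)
Your proposal is correct and follows essentially the same route as the paper: the paper's proof introduces exactly the same pair of maps, $\alpha(d,x)=(d+{}^{x}\widetilde{d},x)$ on the pull-back and $\beta(m,z)=(m-\widetilde{d}(z),z)$ on $M\rtimes\Lh$, and shows they form a morphism of crossed modules yielding an equivalence $\underline{e}_D\sim\underline{e}_{D'}$. Your explicit verification that ${}^{z}d_0=\delta(d_0(z))$ for $z\in\Lh$ supplies the key computation the paper leaves to the reader.
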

\begin{proof}
Take $D'=D + \delta[\widetilde{d}]$ and take the pull-back $\mathfrak{p}$ of $\Pi$ and $s_D$, and the pull-back $\mathfrak{p}'$ of $\Pi$ and $s_{D'}$.
Then $\mathfrak{p}$ consists of all couples $(d,x)$ with $D(x + \mathfrak{h})=[d]$, and $\mathfrak{p}'$ consists of all $(d,x)$ with $D(x+\mathfrak{h})+[^x \widetilde{d}]=[d]$. We have to show that the two associated crossed extensions 
are equivalent. Define a map $\alpha : \mathfrak{p} \rightarrow \mathfrak{p}'$ by putting $\alpha(d,x)=(d + {}^x \widetilde{d},x)$. It is straight-forward to show that this is a Lie algebra homomorphism with image in $\mathfrak{p}'$. Now take $\beta : M \rtimes \Lh \rightarrow M \rtimes \Lh$ with $\beta(m,x)=(m-\widetilde{d}(x),x)$. It is easy to see that $\beta$ is a Lie algebra morphism such that $(\beta,\alpha)$ is a homomorphism of crossed modules, and that we get a commutative diagram 
\[\xymatrix{\underline{e}_D :  0 \ar[r] & M^{\mathfrak{h}} \ar[r] \ar@{=}[d]& M \rtimes \mathfrak{h} \ar[d]^{\beta}\ar[r]^-{\gamma} & \mathfrak{p} \ar[d]^{\alpha}\ar[r] & \mathfrak{g}/\mathfrak{h}\ar@{=}[d] \ar[r] & 0\\
\underline{e}_{D'}:  0 \ar[r] & M^{\mathfrak{h}} \ar[r] & M \rtimes \mathfrak{h} \ar[r]^-{\gamma'} & \mathfrak{p}' \ar[r] & \mathfrak{g}/\mathfrak{h} \ar[r] & 0.}\] This shows that $\underline{e}_D$ and $\underline{e}_{D'}$ are equivalent.
\end{proof}

\begin{lemma}\label{lambda_natural}
The map $\lambda : H^1(\mathfrak{g}/\mathfrak{h},H^1(\mathfrak{h},-)) \rightarrow H^3(\mathfrak{g}/\mathfrak{h},-^{\mathfrak{h}})$ is a natural transformation of functors. 
\end{lemma}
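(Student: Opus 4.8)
The plan is to prove naturality of $\lambda$ by the same strategy used for $\tr$ in Lemma~\ref{tr_natural} and for $\rho$ in Corollary~\ref{rho_natural}: take a $\Lg$-module map $\alpha : M_1 \rightarrow M_2$ and verify that the square
\[\xymatrix{H^1(\Lg/\Lh,H^1(\Lh,M_1)) \ar[r]^-{\lambda} \ar[d]^{\alpha_*} & H^3(\Lg/\Lh,M_1^{\Lh}) \ar[d]^{\alpha_*}\\ H^1(\Lg/\Lh,H^1(\Lh,M_2)) \ar[r]^-{\lambda} & H^3(\Lg/\Lh,M_2^{\Lh})}\]
commutes. Since $\lambda$ is defined on the level of derivations by $\hat\lambda$, I would first reduce to checking this at the level of $\hat\lambda$, i.e.\ that for a derivation $D : \Lg/\Lh \rightarrow H^1(\Lh,M_1)$ the crossed extension $\underline{e}_{\alpha_* D}$ representing $\alpha_*(\hat\lambda(D))$ is equivalent to the crossed extension representing $\hat\lambda(\alpha_* D)$, where $\alpha_* D$ denotes the composite of $D$ with the induced map $\alpha_* : H^1(\Lh,M_1) \rightarrow H^1(\Lh,M_2)$.

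First I would write down both crossed extensions explicitly. The extension $\underline{e}_D$ is built from the canonical crossed module
\[\xymatrix{0 \ar[r] & M_1^{\Lh} \ar[r] & M_1 \rtimes \Lh \ar[r]^-{\gamma_1} & \Der(\Lh,M_1) \rtimes \Lg \ar[r]^-{\Pi_1} & H^1(\Lh,M_1) \rtimes \Lg/\Lh \ar[r] & 0}\]
by pulling back along $s_D$, and $\underline{e}_{\alpha_* D}$ is built from the analogous sequence with $M_2$ by pulling back along $s_{\alpha_* D}$. To form $\alpha_*(\hat\lambda(D))$, I would follow the recipe described at the end of Section~\ref{third_cohom}: decompose $\underline{e}_D$ into its abelian-kernel part $\underline{e}_1$ and its non-abelian part, push out $\underline{e}_1$ along the restriction $\alpha : M_1^{\Lh} \rightarrow M_2^{\Lh}$, and paste back. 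The main task is then to produce a morphism of crossed extensions realising the equivalence.

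The natural candidates are induced by $\alpha$ itself: the map $\alpha \rtimes \mathbb{1} : M_1 \rtimes \Lh \rightarrow M_2 \rtimes \Lh$ on the kernel crossed modules, and the pair $(\alpha_* , \mathbb{1}) : \Der(\Lh,M_1)\rtimes\Lg \rightarrow \Der(\Lh,M_2)\rtimes\Lg$ and $H^1(\Lh,M_1)\rtimes\Lg/\Lh \rightarrow H^1(\Lh,M_2)\rtimes\Lg/\Lh$ on the outer terms, where I use that $\alpha$ being a $\Lg$-module map induces compatible maps $\alpha_* : \Der(\Lh,M_1) \rightarrow \Der(\Lh,M_2)$ and on $H^1$. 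I would check that these assemble into a homomorphism of the two canonical crossed modules (commuting with the $\gamma$'s and the $\Pi$'s and preserving the actions), and that they are compatible with $s_D$ and $s_{\alpha_* D}$ in the sense that $\Pi_2$-preimage of $s_{\alpha_* D}$ receives the image of $\mathfrak{p}$. Using the pull-back universal property (Proposition~\ref{pb}, \ref{exact_is_pb}) this yields an induced map $\mathfrak{p} \rightarrow \mathfrak{p}'$ on the middle terms, giving a crossed module morphism between $\underline{e}_D$ (with coefficients pushed forward) and $\underline{e}_{\alpha_* D}$.

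The main obstacle I expect is bookkeeping rather than conceptual: one must confirm that the map on $\Der$-level induced by $\alpha$ descends correctly to $H^1$-level so that $s_{\alpha_* D} = (\alpha_*\rtimes\mathbb{1})\circ s_D$ in the appropriate sense, and that the push-out used to define $\alpha_*$ on $H^3$ matches the pasted-together extension one gets from the crossed-module morphism. Concretely, the delicate point is that $\alpha_*(\hat\lambda(D))$ is defined by a push-out on the abelian kernel only, whereas the morphism of crossed modules modifies all the terms; I would reconcile these by invoking Proposition~\ref{exact_is_poc} to identify the pushed-out abelian part with the kernel of the morphism's target, exactly as the parenthetical remark in Lemma~\ref{tr_natural} suggests. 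Once the crossed module morphism $(\alpha\rtimes\mathbb{1}, \text{induced map on }\mathfrak{p})$ is in place and shown to fix $M_2^{\Lh}$ and $\Lg/\Lh$, the equivalence of the two crossed extensions follows by the definition of equivalence in Section~\ref{third_cohom}, and hence $\alpha_*\circ\lambda = \lambda\circ\alpha_*$ as required.
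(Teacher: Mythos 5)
Your proposal follows essentially the same route as the paper's proof: both represent $\alpha_*\circ\lambda[D]$ via the push-out of the abelian kernel, represent $\lambda\circ\alpha_*[D]$ via the pull-back $\mathfrak{p}'$ along $s_{D'}$ with $D'=H^1(\mathbb{1},\alpha)\circ D$, and then exhibit the equivalence through the maps $\alpha\rtimes\mathbb{1}$ on $M_1\rtimes\Lh$ and $(d,x)\mapsto(\alpha\circ d,x)$ on the pull-backs, using the universal properties of push-out and pull-back to assemble the morphism of crossed extensions. The plan is correct and matches the paper's argument in all essentials.
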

\begin{proof}
Take a $\mathfrak{g}$-module morphism $\alpha : M_1 \rightarrow M_2$.
We show that the diagram
\[\xymatrix{H^1(\mathfrak{g}/\mathfrak{h},H^1(\mathfrak{h},M_1))\ar[d]^{\alpha_*} \ar[r]^-{\lambda} & H^3(\mathfrak{g}/\mathfrak{h},M_1^{\mathfrak{h}}) \ar[d]^{\alpha_*}\\
H^1(\mathfrak{g}/\mathfrak{h},H^1(\mathfrak{h},M_2)) \ar[r]^-{\lambda} & H^3(\mathfrak{g}/\mathfrak{h},M_2^{\mathfrak{h}})}\]
is commutative. 

Take $[D] \in H^1(\mathfrak{g}/\mathfrak{h},H^1(\mathfrak{h},M_1))$. Observe that $\alpha_*[D]=[D']$, where $D'=H^1(\mathbb{1},\alpha)(D)$, so if $D(x)=[d]$, then $D'(x)=[\alpha \circ d]$. If $\mathfrak{p}'$ is the pull-back of $\Pi$ and $s_{D'}$ as before, then $\lambda \circ \alpha_*[D]$ is represented by 
\[\xymatrix{0 \ar[r] & M_2^{\mathfrak{h}} \ar[r] & M_2 \rtimes \mathfrak{h} \ar[r] & \mathfrak{p}' \ar[r] & \mathfrak{g}/\mathfrak{h} \ar[r] & 0.}\]
On the other hand, one can check that $\alpha_* \circ \lambda[D]$ can be represented by the bottom row in the diagram
\[\xymatrix{ 0 \ar[r] & M_1^{\mathfrak{h}} \ar[r] \ar[d]^{\alpha} & M_1 \rtimes \mathfrak{h} \ar[d]\ar[r] & \mathfrak{p} \ar@{=}[d] \ar[r] & \mathfrak{g}/\mathfrak{h}\ar@{=}[d] \ar[r] & 0\\
 0 \ar[r] & M_2^{\mathfrak{h}} \ar[r] & \mathfrak{e} \ar[r] & \mathfrak{p} \ar[r] & \mathfrak{g}/\mathfrak{h} \ar[r] & 0,}\]
where the left hand square is a push-out construction as described in Section \ref{pb_po}. We show that the two crossed extensions are equivalent. Take maps  $\alpha \rtimes \mathbb{1} : M_1 \rtimes \mathfrak{h} \rightarrow M_2 \rtimes \mathfrak{h}$ and $M_2^{\mathfrak{h}} \hookrightarrow M_2 \rtimes \mathfrak{h}$, and use the universal property of the push-out construction to obtain a map $\rho : \mathfrak{e} \rightarrow M_2 \rtimes \mathfrak{h}$. We can also define a map $\tau:\mathfrak{p} \rightarrow \mathfrak{p}'$ mapping $(d,x)$ to $(\alpha \circ d,x)$. 
Now one can check that the diagram 
\[\xymatrix{ 0 \ar[r] & M_2^{\mathfrak{h}} \ar@{=}[d] \ar[r] & \mathfrak{e} \ar[r] \ar[d]^{\rho} & \mathfrak{p} \ar[d]^{\tau} \ar[r] & \mathfrak{g}/\mathfrak{h} \ar@{=}[d] \ar[r] & 0\\
 0 \ar[r] & M_2^{\mathfrak{h}} \ar[r] & M_2 \rtimes \mathfrak{h} \ar[r] & \mathfrak{p}'  \ar[r] & \mathfrak{g}/\mathfrak{h} \ar[r] & 0,}\]
is commutative, using uniqueness in the universal property of the push-out construction.
\end{proof}


\begin{proposition}
The map $\lambda : H^1(\mathfrak{g}/\mathfrak{h},H^1(\mathfrak{h},-)) \rightarrow H^3(\mathfrak{g}/\mathfrak{h},-^{\mathfrak{h}})$ is a homomorphism of vector spaces.
\end{proposition}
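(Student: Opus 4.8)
The plan is to follow the same two-step strategy that was used for $\tr$ and $\rho$ in Propositions \ref{tr_good} and \ref{rho_good}: first establish additivity via the categorical machinery of Section 4, and then upgrade to full $k$-linearity using naturality in the module, which is already available from Lemma \ref{lambda_natural}.

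For additivity I would argue that both the source functor $M \mapsto H^1(\mathfrak{g}/\mathfrak{h}, H^1(\mathfrak{h}, M))$ and the target functor $M \mapsto H^3(\mathfrak{g}/\mathfrak{h}, M^{\mathfrak{h}})$, regarded as functors $\mathfrak{g}\text{-Mod} \to k\text{-Mod}$, preserve group objects. Each is a composite of functors preserving finite products and the final object: the assignment $M \mapsto H^1(\mathfrak{h}, M)$ is a functor $\mathfrak{g}\text{-Mod} \to (\mathfrak{g}/\mathfrak{h})\text{-Mod}$, the invariants functor $M \mapsto M^{\mathfrak{h}}$ likewise lands in $(\mathfrak{g}/\mathfrak{h})\text{-Mod}$, and $H^n(\mathfrak{g}/\mathfrak{h}, -)$ preserves products and the final object by Lemma \ref{they_preserve}. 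Since preservation of products and of the final object is stable under composition, the unnamed lemma opening Section 4 shows that both composite functors preserve group objects. As Lemma \ref{lambda_natural} already exhibits $\lambda$ as a natural transformation between exactly these two functors, Lemma \ref{homomorphism_of_group_objects} immediately gives that $\lambda$ is a group homomorphism, i.e.\ that it preserves addition. Note that, unlike the case of $\rho$, the source of $\lambda$ is the full group $H^1(\mathfrak{g}/\mathfrak{h}, H^1(\mathfrak{h}, M))$ rather than a subgroup, so no issue with restricting the group structure arises here.

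For scalar multiplication I would repeat the argument from the proof of Proposition \ref{tr_good} essentially verbatim. Fixing $c \in k$, the multiplication-by-$c$ map $M \to M$, $m \mapsto c \cdot m$, is a $\mathfrak{g}$-module endomorphism, and the maps it induces on $H^1(\mathfrak{g}/\mathfrak{h}, H^1(\mathfrak{h}, M))$ and on $H^3(\mathfrak{g}/\mathfrak{h}, M^{\mathfrak{h}})$ are precisely scalar multiplication by $c$ on each space. By Lemma \ref{lambda_natural}, $\lambda$ commutes with these induced maps, whence $\lambda(c \cdot [D]) = c \cdot \lambda([D])$. Combined with additivity, this shows that $\lambda$ is a $k$-linear map of vector spaces.

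The only genuinely non-formal point, and hence the step I would treat most carefully, is the verification that the two composite functors preserve group objects, which rests on the constituent functors preserving finite products and the final (zero) object. This reduces to the standard facts that $H^1(\mathfrak{h}, M_1 \oplus M_2) \cong H^1(\mathfrak{h}, M_1) \oplus H^1(\mathfrak{h}, M_2)$ compatibly with the $\mathfrak{g}/\mathfrak{h}$-action, that $(M_1 \oplus M_2)^{\mathfrak{h}} = M_1^{\mathfrak{h}} \oplus M_2^{\mathfrak{h}}$, and that each functor sends the zero module to the zero module; once these are in hand, everything else is formal from Section 4.
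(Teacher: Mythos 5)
Your proposal is correct and follows essentially the same route as the paper, which likewise deduces that $\lambda$ is a group homomorphism from Lemmas \ref{they_preserve}, \ref{homomorphism_of_group_objects} and \ref{lambda_natural}, and then obtains $k$-linearity from naturality exactly as in Proposition \ref{tr_good}. Your explicit verification that the composite functors $M \mapsto H^1(\mathfrak{g}/\mathfrak{h},H^1(\mathfrak{h},M))$ and $M \mapsto H^3(\mathfrak{g}/\mathfrak{h},M^{\mathfrak{h}})$ preserve group objects is a detail the paper leaves implicit, and it is handled correctly.
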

\begin{proof}
The map $\lambda$ is a group homomorphism by Lemma \ref{they_preserve}, Lemma \ref{homomorphism_of_group_objects} and Lemma \ref{lambda_natural}. The fact that $\lambda$ preserves multiplication follows directly from naturality, in the same way as in the previous sections.
\end{proof}

To prove exactness, we need the following lemma. 
\begin{lemma}\label{Huebschmann}
A crossed extension 
\[\underline{e}:\xymatrix{0 \ar[r] & M \ar[r] & \mathfrak{m} \ar[r]^{\delta} & \mathfrak{g} \ar[r]^{\pi} & \mathfrak{q} \ar[r] & 0}\]
 is equivalent to the zero extension if and only if there exists  a short exact sequence \break $\underline{e}':\xymatrix@1{0 \ar[r]& \mathfrak{m} \ar[r]^{i} & \mathfrak{e} \ar[r]& \mathfrak{q} \ar[r]& 0}$ of Lie algebras and a Lie algebra homomorphism $h : \mathfrak{e} \rightarrow \mathfrak{g}$ such that the diagram
\[\xymatrix{ \underline{e}': {\phantom{0}}  & 0 \ar[r]& \mathfrak{m} \ar@{=}[d]\ar[r]^{i}& \mathfrak{e} \ar[d]^{h}\ar[r]& \mathfrak{q} \ar@{=}[d] \ar[r]& 0\\
\underline{e} {\phantom{'}}:  0 \ar[r]& M \ar[r]& \mathfrak{m} \ar[r]^{\delta} & \mathfrak{g} \ar[r]^{\pi} & \mathfrak{q} \ar[r] & 0}\]
commutes and $(\mathbb{1}_{\mathfrak{m}},h)$ is a homomorphism of crossed modules.
\end{lemma}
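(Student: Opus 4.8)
The plan is to compute the class $[\underline{e}] \in H^3(\Lq, M)$ by comparing the crossed extension $\underline{e}$ with a free crossed resolution of $\Lq$, and to extract both implications from this comparison. So I would first fix a free crossed resolution
\[\cdots \to \Lc_3 \xrightarrow{\partial_3} \Lc_2 \xrightarrow{\partial_2} \Lc_1 \xrightarrow{\partial_1} \Lq \to 0,\]
with $\Lc_1$ a free Lie algebra, $\Lr := \Ker \partial_1 = \Image \partial_2$, and $(\Lc_2 \xrightarrow{\partial_2} \Lc_1)$ a free crossed module. Recall (from the theory of crossed complexes developed in Section~\ref{huebschmann_section}) that a morphism of crossed complexes $f = (f_1, f_2)$ from this resolution to $\underline{e}$ covering $\mathbb{1}_\Lq$ always exists; that the composite $f_2 \partial_3 : \Lc_3 \to \Lm$ lands in $M = \Ker \delta$ and is a $3$-cocycle representing $[\underline{e}]$; and that $[\underline{e}] = 0$ exactly when this cocycle can be killed by a homotopy, i.e.\ by altering $f_2$ by a $\Lq$-equivariant map $\Lc_2 \to M$ without changing $f_1$.

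For the direction $(\Leftarrow)$, suppose the sequence $\underline{e}'$ and the map $h$ exist. Since $\Lc_1$ is free, I would lift the projection $\partial_1 : \Lc_1 \to \Lq$ to a Lie homomorphism $g : \Lc_1 \to \Le$ and set $f_1 := h \circ g$. As $g(\Lr) \subseteq i(\Lm)$, the restriction $g|_\Lr : \Lr \to \Lm$ is defined, and $f_2 := (g|_\Lr) \circ \partial_2 : \Lc_2 \to \Lm$ together with $f_1$ forms a comparison morphism (one uses $h i = \delta$ to verify $\delta f_2 = f_1 \partial_2$, and that $g$ is a homomorphism for equivariance). Its cocycle is $f_2 \partial_3 = (g|_\Lr)\partial_2 \partial_3 = 0$, whence $[\underline{e}] = 0$. (Alternatively, this direction follows from the $H^3$-functoriality sketched at the end of Section~\ref{third_cohom}: the pair $(\mathbb{1}_\Lm, h)$ exhibits a morphism of crossed extensions from the trivial one $0 \to 0 \to \Lm \xrightarrow{i} \Le \to \Lq \to 0$ to $\underline{e}$, inducing $\phi_*$ for $\phi : 0 \hookrightarrow M$, so $[\underline{e}] = \phi_*(0) = 0$.)

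For the direction $(\Rightarrow)$, assume $[\underline{e}] = 0$. After a homotopy I may assume the comparison morphism satisfies $f_2 \partial_3 = 0$, so that $f_2$ vanishes on $\Ker \partial_2 = \Image \partial_3$ and therefore factors as $f_2 = \bar f_2 \circ \partial_2$ for a map $\bar f_2 : \Lr \to \Lm$ with $\delta \bar f_2 = f_1|_\Lr$. Using the crossed-module identities $\delta({}^{n}m) = [n,\delta(m)]$ and ${}^{\delta(m)}m' = [m,m']$ one checks that $\bar f_2$ is in fact a Lie algebra homomorphism, equivariant for the adjoint $\Lc_1$-action on $\Lr$ and the $f_1$-pulled-back $\Lg$-action on $\Lm$. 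I would then form the semidirect product $\Lm \rtimes \Lc_1$ (with $\Lc_1$ acting on $\Lm$ through $f_1$) and put
\[\Le := (\Lm \rtimes \Lc_1)\big/ S, \qquad S := \{(\bar f_2(r), -r) \mid r \in \Lr\}.\]
The equivariance and homomorphism properties of $\bar f_2$, together with the crossed-module axioms, show that $S$ is an ideal, and a direct check gives an exact sequence $0 \to \Lm \xrightarrow{i} \Le \xrightarrow{p} \Lq \to 0$ with $i(a) = [(a,0)]$ and $p[(a,c)] = \partial_1(c)$. Finally $h[(a,c)] := \delta(a) + f_1(c)$ is a well-defined Lie homomorphism (it vanishes on $S$ precisely because $\delta \bar f_2 = f_1|_\Lr$) satisfying $h i = \delta$ and $\pi h = p$; the identity ${}^{\delta(a)}a' = [a,a']$ then matches the adjoint $\Le$-action on $\Lm$ with the $h$-pulled-back $\Lg$-action, so $(\mathbb{1}_\Lm, h)$ is a crossed-module morphism, completing this direction.

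I expect the main obstacle to be the homotopy step in $(\Rightarrow)$: justifying that vanishing of $[\underline{e}]$ lets one normalise the comparison morphism to $f_2 \partial_3 = 0$ while keeping $f_1$ fixed and staying within the class of crossed-complex morphisms (only modifying $f_2$ by a map into the central submodule $M = \Ker\delta$). This is exactly the characterisation of null-homotopic morphisms that Section~\ref{huebschmann_section} is designed to provide; granting it, the remaining work is the bracket bookkeeping showing that $S$ is an ideal and that $h$ is a homomorphism, which is routine.
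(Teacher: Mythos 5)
Your overall architecture coincides with the paper's: compare $\underline{e}$ with a free crossed object over $\Lq$, normalise the comparison morphism so that it vanishes on the kernel of the free crossed module, and then realise $\Le$ as the quotient of a semidirect product of $\Lm$ with a free Lie algebra by the graph ideal $S$, with $h$ induced by $(a,c)\mapsto \delta(a)+f_1(c)$. That part of your write-up --- the verification that $\bar f_2$ is an equivariant Lie homomorphism, that $S$ is an ideal, and that $h$ descends and is a crossed-module morphism --- matches the paper's coequalizer construction and is correct. Your $(\Leftarrow)$ direction is also fine; the paper dismisses it in one sentence, and your functoriality argument via the crossed extension of $\Lq$ by $0$ is a legitimate way to make that sentence precise.

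The genuine gap is exactly the step you flag and then ``grant'': the normalisation $f_2\partial_3=0$. You attribute it to ``the characterisation of null-homotopic morphisms that Section~\ref{huebschmann_section} is designed to provide,'' but the section provides no such statement. What it proves is the existence of a comparison morphism from a free crossed complex to a crossed resolution (Lemma~\ref{lemma_free_crossed_complex}) and its uniqueness up to homotopy (Lemmas~\ref{up_to_homotopy} and~\ref{lemma_uniqueness_homotopy}). Nowhere is it established that $f_2\partial_3$ is a cocycle representing $[\underline{e}]$ under the bijection $\Opext^2(\Lq,M)\cong H^3(\Lq,M)$, nor that triviality of $[\underline{e}]$ --- which by definition means $\underline{e}$ is connected to the zero extension $\underline{O}$ by a \emph{zig-zag} of morphisms of crossed extensions --- forces $f_2\partial_3$ to be a coboundary in the induced cochain complex. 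Translating the zig-zag condition into a statement about the comparison morphism is precisely the nontrivial content of the paper's proof: it runs an induction along the chain $\underline{O}=\underline{e}_0,\underline{e}_1,\dots,\underline{e}_n=\underline{e}$, using homotopy-uniqueness at each elementary step to propagate an equivariant extension $\widetilde{\nu}$ of the restriction of the comparison map to the kernel (in your notation: a linear, action-compatible correction of $f_2$ defined on all of $\Lc_2$ and agreeing with $f_2$ on $\Ker\partial_2$); subtracting $i\circ\widetilde{\nu}$ is then the homotopy you want. So your proposal is not wrong in outline, but it defers the only hard point to a lemma that does not exist in the stated form; to close it you must either reproduce the paper's zig-zag induction or independently prove that the free crossed resolution computes $H^3(\Lq,M)$ compatibly with the $\Opext$ interpretation --- neither of which is routine.
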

The proof of the lemma will be postponed to Section \ref{huebschmann_section}. Now we can prove exactness of the sequence.

\begin{lemma}
The sequence 
\[\xymatrix{H^2(\mathfrak{g},M)_1\ar[r]^-{\rho}& H^1(\mathfrak{g}/\mathfrak{h},H^1(\mathfrak{h},M)) \ar[r]^-{\lambda} & H^3(\mathfrak{g}/\mathfrak{h},M^{\mathfrak{h}})}\] is exact.
\end{lemma}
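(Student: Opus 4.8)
The plan is to prove exactness at $H^1(\Lg/\Lh,H^1(\Lh,M))$ by establishing the two inclusions $\Image\rho\subseteq\Ker\lambda$ and $\Ker\lambda\subseteq\Image\rho$ separately, using Lemma \ref{Huebschmann} in both directions as the bridge between the vanishing of a class in $H^3(\Lg/\Lh,M^{\Lh})$ and the existence of a suitable short exact sequence of Lie algebras.

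For $\lambda\circ\rho=0$, I would start from a partially split extension $\underline e\colon 0\to M\xrightarrow{i}\Le\xrightarrow{p}\Lg\to 0$ with a fixed partial splitting $s_0$, put $D=\overline\rho(\underline e)$, and exhibit the crossed extension $\underline e_D$ as equivalent to zero via Lemma \ref{Huebschmann}. The short exact sequence I would feed into the lemma is $0\to M\rtimes\Lh\xrightarrow{j}\Le\to\Lg/\Lh\to 0$, where $j(m,z)=i(m)+s_0(z)$ identifies $M\rtimes\Lh$ with the ideal $p^{-1}(\Lh)$ of $\Le$, and the required Lie algebra map into the middle term $\Lp$ of $\underline e_D$ is $h(x)=(d^{s_0}_x,p(x))$. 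That $h$ is a homomorphism landing in $\Lp$ follows from the membership condition defining $\Lp$ together with the identity $d_{[y_1,y_2]}={}^{p(y_1)}d_{y_2}-{}^{p(y_2)}d_{y_1}$ already established for $\widetilde\rho(\underline e)$. The commutativity $h\circ j=\gamma$ reduces to the computations $d^{s_0}_{i(m)}=-\delta(m)$ and $d^{s_0}_{s_0(z)}=0$, and the crossed-module compatibility of $(\mathbb 1_{M\rtimes\Lh},h)$ amounts to matching the adjoint action of $\Le$ on the ideal $p^{-1}(\Lh)$ with the crossed-module action ${}^{(d,x)}(m,z)=(x\cdot m+d(z),[x,z])$ pulled back along $h$; this is a direct bracket computation in $\Le$. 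Lemma \ref{Huebschmann} then gives $\lambda(\rho[\underline e])=[\underline e_D]=0$.

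For the reverse inclusion, I would take $[D]$ with $\lambda[D]=[\underline e_D]=0$ and apply Lemma \ref{Huebschmann} to $\underline e_D$, obtaining a short exact sequence $0\to M\rtimes\Lh\xrightarrow{j}\Lf\to\Lg/\Lh\to 0$ and a Lie algebra map $h\colon\Lf\to\Lp$ with $(\mathbb 1,h)$ a crossed-module morphism. Setting $q=\mathrm{pr}_2\circ h\colon\Lf\to\Lg$, I would check that $q$ is surjective and that $\Ker q=j(M)$: since the quotient map $\Lf\to\Lg/\Lh$ equals $\pi\circ q$, one has $\Ker q\subseteq\Ker(\pi\circ q)=j(M\rtimes\Lh)$, and on this ideal $q(j(m,z))=z$, forcing $\Ker q=j(M\times\{0\})$. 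Hence $\underline e\colon 0\to M\to\Lf\xrightarrow{q}\Lg\to 0$ is an extension, partially split by $s_0(z)=j(0,z)$, and a short computation with the crossed-module action shows the induced $\Lg$-action on $M$ is the given one, so $[\underline e]\in H^2(\Lg,M)_1$. Finally, writing $h(\widetilde x)=(d,q(\widetilde x))$ for a preimage $\widetilde x$ of $x$, the same action formula yields $d^{s_0}_{\widetilde x}=d$, whence $\overline\rho(\underline e)(\pi x)=[d]=D(\pi x)$; thus $\rho[\underline e]=[D]$.

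The main obstacle is the correct bookkeeping around Lemma \ref{Huebschmann}: one must produce, in each direction, exactly the crossed-module morphism the lemma demands, and the heart of the matter is the compatibility of the ambient adjoint action with the crossed-module action ${}^{(d,x)}(m,z)=(x\cdot m+d(z),[x,z])$. Once that compatibility is verified, the surjectivity of $q$, the identification $\Ker q=j(M)$, and the identity $\overline\rho(\underline e)=D$ are all short bracket computations, and the module structures follow from the semi-direct product conventions fixed in Section \ref{der_sec_sdc}.
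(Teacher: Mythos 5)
Your proposal is correct and follows essentially the same route as the paper: both directions hinge on Lemma \ref{Huebschmann}, applied to the short exact sequence $0 \to M \rtimes \Lh \to \Le \to \Lg/\Lh \to 0$ coming from a partial splitting, with the comparison map $h(y)=(d^{s}_{y},p(y))$ into the pull-back $\Lp$ in the forward direction and its second-coordinate projection recovering the extension of $\Lg$ by $M$ in the converse. Your verifications of surjectivity, of $\Ker q=j(M\times\{0\})$, and of $d^{s_0}_{\widetilde x}=d$ via the crossed-module compatibility are exactly the computations the paper leaves implicit.
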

\begin{proof}
Take a partially split extension
\[\underline{e} : \xymatrix{0 \ar[r] & M \ar[r]^-{i} & \mathfrak{e} \ar[r]^-{p} & \mathfrak{g} \ar[r] & 0.}\] Since the sequence is partially split, there is an injective map $j : M \rtimes \mathfrak{h} \rightarrow \mathfrak{e}$ such that 
$p \, \circ j = p_0$ and $j \circ i_0=i$, where $p_0$ is the surjection $M \rtimes \mathfrak{h} \rightarrow \mathfrak{h}$ and $i_0$ is the embedding $M \hookrightarrow M \rtimes \mathfrak{h}$. It is not difficult to show that this yields an exact sequence 
\[\underline{e}' : \xymatrix{0 \ar[r] & M \rtimes \mathfrak{h} \ar[r]^-{j} & \mathfrak{e} \ar[r]^-{\pi \circ p} & \mathfrak{g}/\mathfrak{h} \ar[r] & 0.}\] 
Take the partial splitting $s(z)=j(0,z)$ of $\underline{e}$, and the associated derivation $D$ representing $\rho[\underline{e}]$. Now $\lambda \circ \rho [\underline{e}]$ can be represented by 
\[\xymatrix{0 \ar[r] & M^{\mathfrak{h}} \ar[r] & M \rtimes \mathfrak{h} \ar[r]^-{\gamma} & \mathfrak{p} \ar[r] & \mathfrak{g}/\mathfrak{h} \ar[r] & 0,}\]
where $\mathfrak{p} \subset \mbox{Der}(\mathfrak{h},M) \rtimes \mathfrak{g}$ consists of all pairs $(d,x)$ for which $[d]=D(x+\mathfrak{h})$, i.e.~$[d]=[d^s_{\widetilde{x}}]$ for some $\widetilde{x}$ with $p(\widetilde{x})=x$. 
It is now straight-forward that there is a map $h : \mathfrak{e} \rightarrow \mathfrak{p}$; $y \mapsto (d^s_y,p(y))$, that gives a commutative diagram
\[\xymatrix{ & 0 \ar[r]& M \rtimes \mathfrak{h} \ar@{=}[d]\ar[r]& \mathfrak{e} \ar[d]^{h}\ar[r]& \mathfrak{g}/\mathfrak{h} \ar@{=}[d] \ar[r]& 0\\
0 \ar[r]& M^{\mathfrak{h}} \ar[r]& M \rtimes \mathfrak{h} \ar[r]^{\gamma} & \mathfrak{p} \ar[r] & \mathfrak{g}/\mathfrak{h} \ar[r] & 0,}\] where the left hand square is a morphism of crossed modules. This can easily be proven using the relation $j(m,z)=i(m)+s(z)$. Lemma \ref{Huebschmann} shows that $\lambda \circ \rho [\underline{e}]=0$.

Conversely, take $[D]\in H^1(\mathfrak{g}/\mathfrak{h},H^1(\mathfrak{h},M))$ with $\lambda[D]=0$. This means that there exists a short exact sequence 
$\xymatrix@1{0 \ar[r]& M \rtimes \mathfrak{h} \ar[r]^-{i} & \mathfrak{e} \ar[r]^-{p} & \mathfrak{g}/\mathfrak{h} \ar[r]& 0}$ and a Lie algebra homomorphism $h : \mathfrak{e} \rightarrow \mathfrak{p}$ such that the diagram
\[\xymatrix{ & 0 \ar[r]& M \rtimes \mathfrak{h} \ar@{=}[d]\ar[r]^-{i}& \mathfrak{e} \ar[d]^{h}\ar[r]^-{p}& \mathfrak{g}/\mathfrak{h} \ar@{=}[d] \ar[r]& 0\\
0 \ar[r]& M^{\mathfrak{h}} \ar[r]& M \rtimes \mathfrak{h} \ar[r]^-{\gamma} & \mathfrak{p} \ar[r] & \mathfrak{g}/\mathfrak{h} \ar[r] & 0}\]
is commutative, where $\mathfrak{p}$ is the usual pull-back and the left hand square is a morphism of crossed modules. We take the composition $\mathfrak{p} \hookrightarrow \mbox{Der}(\mathfrak{h},M) \rtimes \mathfrak{g} \rightarrow \mathfrak{g}$, and compose this with $h$ to get a map $p': \mathfrak{e} \rightarrow \mathfrak{g}$. It is not difficult to prove that this yields an exact sequence 
\[\underline{e}: \xymatrix{0 \ar[r] & M \ar[r]^{i \circ i_0} & \mathfrak{e} \ar[r]^{p'} & \mathfrak{g} \ar[r] & 0,}\] where $i_0$ is the inclusion $M \hookrightarrow M \rtimes \mathfrak{h}$. The sequence is partially split with partial splitting $s(z)=i(0,z)$. Take the associated derivation $D'$ representing $\rho[\underline{e}]$. We claim that $D'=D$. For $x \in \mathfrak{g}$, we choose a pre-image $y \in \mathfrak{e}$ under $p'$. Take the derivation $d$ such that $h(y)=(d,x) \in \mbox{Der}(\mathfrak{h},M) \rtimes \mathfrak{g}$. Since $h(y)$ belongs to $\mathfrak{p}$, we know that $[d]=D(x + \mathfrak{h})$. 
Now 
\[i \circ i_0 \circ d^s_y(z)=[y,s(z)]-s[p'(y),z]=[y,i(0,z)]-i(0,[x,z])=i(^{h(y)}(0,z))-i(0,[x,z]),\]
where the last equality holds because $h$ is a morphism of crossed modules. It follows that $i_0 \circ d^s_y(z)=(d(z),[x,z])-(0,[x,z])=(d(z),0)$, so $d^s_y=d$. This finishes the proof, since $D'(x + \mathfrak{h})=[d^s_y]=[d]=D(x + \mathfrak{h})$ for all $x \in \Lg$, so $[D]=[D']=\rho[\underline{e}]$.
\end{proof}

\section{Naturality of $\tr$, $\rho$ and $\lambda$}

We have shown that the maps $tr$, $\rho$ and $\lambda$ we have constructed are natural in the modules. A question that arises is whether the maps are also natural in the Lie algebra extensions. 

Take two Lie algebra extensions $\underline{\epsilon}$ and $\underline{\epsilon}'$ and Lie algebra maps $\alpha$ and $\overline{\alpha}$ making the following diagram commute:
\[\xymatrix{ \underline{\epsilon}':  0 \ar[r] &\mathfrak{h}' \ar[d]^{\alpha} \ar[r] & \mathfrak{g}' \ar[d]^{\alpha}\ar[r]^-{\pi'} & \mathfrak{g}'/\mathfrak{h}' \ar[d]^{\overline{\alpha}}\ar[r] & 0\\
\underline{\epsilon} {\phantom{'}}: 0 \ar[r] &\mathfrak{h} \ar[r] & \mathfrak{g} \ar[r]^-{\pi} & \mathfrak{g}/\mathfrak{h} \ar[r] & 0.}\] Take a $\mathfrak{g}$-module $M$ and observe that it inherits a $\mathfrak{g}'$-module structure via $\alpha$.

\subsection{The map $\tr$}
It is easy to show that $\alpha$ induces a map $\alpha^*: H^1(\mathfrak{h},M)^{\mathfrak{g}/\mathfrak{h}} \rightarrow H^1(\mathfrak{h}',M)^{\mathfrak{g}'/\mathfrak{h}'}$, since $^x (d \circ \alpha)={}^{\alpha(x)}d \circ \alpha $ for $x \in \mathfrak{g}'$ and $d : \mathfrak{h} \rightarrow M$ a derivation. Now we show that the diagram
\[\xymatrix{H^1(\mathfrak{h},M)^{\mathfrak{g}/\mathfrak{h}} \ar[d]^{\alpha^*} \ar[r]^-{\tr}& H^2(\mathfrak{g}/\mathfrak{h},M^{\mathfrak{h}}) \ar[d]^{\alpha^*}\\
H^1(\mathfrak{h}',M)^{\mathfrak{g}'/\mathfrak{h}'} \ar[r]^-{\tr} & H^2(\mathfrak{g}'/\mathfrak{h}',M^{\mathfrak{h}'})}\]
is commutative. 
Take $[d] \in H^1(\mathfrak{h},M)^{\mathfrak{g}/\mathfrak{h}}$ and put $d'=d \circ \alpha$. It is clear that $\tr \circ \, \alpha^*[d]$ can be represented by the extension
\begin{equation}\label{tr_circ_alpha}\underline{e}: \xymatrix{0 \ar[r] & M^{\mathfrak{h}'} \ar[r] & I'/s'(\mathfrak{h}') \ar[r] & \mathfrak{g}'/\mathfrak{h}' \ar[r] & 0,}\end{equation} where $s' : \mathfrak{h}' \rightarrow M \rtimes \mathfrak{g}'$ is the partial splitting associated with $d'$, and $I'\subset M \rtimes \mathfrak{g}'$ is the largest Lie subalgebra in which $s'(\mathfrak{h}')$ is an ideal. 
On the other hand, we can represent $\alpha^* \circ \tr[d]$ by the bottom row in the diagram
\begin{equation}\label{p12}\xymatrix{0 \ar[r] & M^{\mathfrak{h}} \ar[r] & I / s(\mathfrak{h}) \ar[r] & \mathfrak{g}/\mathfrak{h} \ar[r] & 0\\
0 \ar[r] & M^{\mathfrak{h}} \ar@{=}[u] \ar@{^(->}[d]^{j} \ar[r] & \mathfrak{p} \ar[u] \ar[d]\ar[r] & \mathfrak{g}'/\mathfrak{h}' \ar[u]^{\overline{\alpha}} \ar@{=}[d] \ar[r] & 0\\
0 \ar[r] & M^{\mathfrak{h}'} \ar[r] & \mathfrak{e}\ar[r] & \mathfrak{g}'/\mathfrak{h}' \ar[r] & 0,}\end{equation}
where $s$ is the partial splitting associated with $d$ and $I \subset M \rtimes \Lg$ is the largest Lie subalgebra in which $s(\Lh)$ is an ideal. The upper right hand square is a pull-back and the lower left hand square is a push-out construction. Observe that we have indeed an inclusion $j:M^{\mathfrak{h}} \hookrightarrow M^{\mathfrak{h}'}$, since the action of $\mathfrak{h}'$ is induced by the $\mathfrak{h}$-action via $\alpha$. We want to find some extension $\underline{e}'$ of $\mathfrak{g}'/\mathfrak{h}'$ by $M^{\mathfrak{h}}$, that is equivalent to the second row in the previous diagram and such that $[\underline{e}]=j_*[\underline{e}']$.
First we need the following lemma. 
\begin{lemma}\label{char_I}
An element $(m,x) \in M \rtimes \mathfrak{g}$ belongs to $I_{M \rtimes \mathfrak{g}}(s_d(\mathfrak{h}))$ iff $^{x}d=\delta(m)$. 
\end{lemma}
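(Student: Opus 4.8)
The plan is to prove both directions simultaneously by expanding the semi-direct product bracket and reading off the membership condition. First I would recall that, by definition, $(m,x) \in I_{M \rtimes \Lg}(s_d(\Lh))$ exactly when $[(m,x), s_d(z)] \in s_d(\Lh)$ for every $z \in \Lh$, where $s_d(z) = (d(z),z)$. Using the Lie bracket $[(m,x),(m',x')] = (x\cdot m' - x'\cdot m, [x,x'])$ of the standard split extension, I compute
\[
[(m,x),(d(z),z)] = \bigl(x\cdot d(z) - z\cdot m,\; [x,z]\bigr).
\]
Since $\Lh$ is an ideal of $\Lg$, the second coordinate $[x,z]$ automatically lies in $\Lh$; hence the bracket lies in $s_d(\Lh)$ if and only if it equals $s_d([x,z]) = (d[x,z], [x,z])$, that is, if and only if its first coordinate satisfies $x\cdot d(z) - z\cdot m = d[x,z]$.

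Next I would rewrite this scalar identity so that the two relevant derivations appear explicitly. By the formula for the $\Lg$-action on $\Der(\Lh,M)$ recalled in Section \ref{der_sec_sdc}, one has $({}^{x}d)(z) = x\cdot d(z) - d[x,z]$, while the inner derivation is $\delta(m)(z) = z\cdot m$. Thus the condition becomes $({}^{x}d)(z) = \delta(m)(z)$. Since this must hold for all $z \in \Lh$, it is precisely the equality ${}^{x}d = \delta(m)$ of elements of $\Der(\Lh,M)$, which is exactly the claimed equivalence. Running the implications in reverse gives the converse with no extra work, so the "iff" follows from the single chain of equivalences.

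I do not expect a genuine obstacle here: once the bracket is expanded the statement is a one-line verification, and the only points requiring care are keeping the sign convention of the semi-direct product straight and recognizing the combination $x\cdot d(z) - d[x,z]$ as the action formula for ${}^{x}d$. It is worth noting that the argument does not use that $d$ represents a class in $H^1(\Lh,M)^{\Lg/\Lh}$; it only uses that $d$ is a derivation, so that $s_d$ is a Lie algebra homomorphism and $s_d(\Lh)$ is a subalgebra on which the membership test is meaningful.
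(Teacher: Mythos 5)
Your proof is correct and is exactly the intended verification: the paper states Lemma \ref{char_I} without proof, and your computation matches the identity $d^{\,s_d}_{(m,x)} = {}^{x}d - \delta(m)$ already recorded in Section \ref{tr}, since $(m,x)\in I_{M\rtimes\Lg}(s_d(\Lh))$ is equivalent to $d^{\,s_d}_{(m,x)}=0$. The sign conventions, the use of $\Lh$ being an ideal to control the second coordinate, and the closing remark that only $d$ being a derivation is needed are all accurate.
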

We can consider the map $\mathbb{1} \rtimes \alpha : M \rtimes \mathfrak{g}' \rightarrow M \rtimes \mathfrak{g}$. 
The lemma shows that if $(m,\alpha(x))\in I \subset M\rtimes \mathfrak{g}$, then $(m,x) \in I' \subset M \rtimes \mathfrak{g}'$, so we can define $\mathfrak{s} := (\mathbb{1} \rtimes \alpha)^{-1}(I)$ in $I'$. This means that $s'(\mathfrak{h}')$ is an ideal in $\mathfrak{s}$. It is not difficult to show that there is now an exact sequence 
\[\xymatrix{0 \ar[r] & M^{\mathfrak{h}}\ar[r] & \mathfrak{s} \ar[r] & \mathfrak{g}' \ar[r] & 0}\] that yields an exact sequence 
\[\underline{e}': \xymatrix{0 \ar[r] & M^{\mathfrak{h}}\ar[r] & \mathfrak{s}/s'(\mathfrak{h}') \ar[r]^{p'} & \mathfrak{g}'/\mathfrak{h}' \ar[r] & 0.}\] 
Since we have a commutative diagram of exact sequences
\[\xymatrix{\underline{e}' :  0 \ar[r] & M^{\mathfrak{h}} \ar@{^(->}[d] \ar[r] & \mathfrak{s}/s'(\mathfrak{h}') \ar@{^(->}[d] \ar[r] & \mathfrak{g}'/\mathfrak{h}' \ar@{=}[d] \ar[r] & 0\\
\underline{e} {\phantom{'}}:  0 \ar[r] & M^{\mathfrak{h}'} \ar[r] & I'/s'(\mathfrak{h}') \ar[r] & \mathfrak{g}'/\mathfrak{h}' \ar[r] & 0
,}\]
it is immediate that $\underline{e}$ is equivalent to the push-out construction of $\underline{e}'$ under the inclusion $M^{\Lh} \hookrightarrow M^{\Lh'}$.
Furthermore, we can take the quotient map $\rho_1 : \mathfrak{s} / s'(\mathfrak{h}') \rightarrow I / s(\mathfrak{h})$ of $\mathbb{1} \rtimes \alpha$. If we take $\rho_2=p' : \mathfrak{s} / s'(\mathfrak{h}') \rightarrow \Lg' / \Lh'$, the pull-back property gives a map $\rho : \mathfrak{s}/s'(\mathfrak{h}') \rightarrow \mathfrak{p}$, where $\Lp$ is the Lie algebra appearing in diagram (\ref{p12}). 
It is left to the reader to check that this makes the 
 diagram
\[\xymatrix{0 \ar[r] & M^{\mathfrak{h}} \ar@{=}[d] \ar[r] & \mathfrak{s}/s'(\mathfrak{h}) \ar[d]^{\rho} \ar[r] & \mathfrak{g}'/\mathfrak{h}' \ar@{=}[d] \ar[r] & 0\\
0 \ar[r] & M^{\mathfrak{h}} \ar[r] & \mathfrak{p} \ar[r] & \mathfrak{g}'/\mathfrak{h}' \ar[r] & 0}\] commutative, so the two extensions are equivalent.

\subsection{The map $\rho$}
Observe that $\alpha$ induces a map $\alpha^* : H^2(\mathfrak{g},M)_1 \rightarrow H^2(\mathfrak{g}',M)_1$.
We prove that the diagram 
\[\xymatrix{H^2(\mathfrak{g},M)_1 \ar[d]^{\alpha^*} \ar[r]^-{\rho}& H^1(\mathfrak{g}/\mathfrak{h},H^1(\mathfrak{h}, M)) \ar[d]^{\alpha^*}\\
H^2(\mathfrak{g}',M)_1 \ar[r]^-{\rho}& H^1(\mathfrak{g}'/\mathfrak{h}',H^1(\mathfrak{h}', M)) }\] commutes.
Take $[\underline{e}] \in H^2(\mathfrak{g},M)_1$ and set $[\underline{e}']=\alpha^*[\underline{e}]$. We can represent $\underline{e}'$ by the bottom row in the diagram
\[\xymatrix{\underline{e}{\phantom{'}} :  0 \ar[r] & M \ar[r] & \mathfrak{e} \ar[r]^{p} & \mathfrak{g} \ar[r] & 0\\
\underline{e}' :  0 \ar[r] & M \ar@{=}[u]\ar[r] & \mathfrak{l} \ar[u]^{\beta} \ar[r]^{p'} & \mathfrak{g}' \ar[u]^{\alpha} \ar[r] & 0,}\]
where the right hand square is a pull-back. If we fix a partial splitting $s : \mathfrak{h} \rightarrow \mathfrak{e}$ for $\underline{e}$, we can use the property of the pull-back to construct a partial splitting $s'$ for $\underline{e}'$ with $\beta \circ s'=s \circ \alpha$. 
Take the derivation $D'$ representing $\rho[\underline{e}']$, associated to the splitting $s'$. It is easy to check that $d^{s'}_x=d^s_{\beta x} \circ \alpha$ for all $x \in \mathfrak{l}$. It follows that $\alpha^*[d^s_{\beta x}]=[d^{s'}_x]=D'(\pi ' p' (x))$ for all $x \in \mathfrak{l}$, while 
$[d^s_{\beta x}]=D(\pi p \beta (x))=(D \circ \overline{\alpha})(\pi ' p'(x))$. Since $\pi'$ and $p'$ are surjective, this means that $D'=H^1(\mathbb{1},\alpha^*) \circ D \circ \overline{\alpha}$, and this shows precisely that $[D']=\alpha^* [D]$. 

\subsection{The map $\lambda$}
We show that the diagram
\[\xymatrix{H^1(\mathfrak{g}/\mathfrak{h},H^1(\mathfrak{h}, M)) \ar[d]^{\alpha^*} \ar[r]^-{\lambda}& H^3(\mathfrak{g}/\mathfrak{h},M^{\mathfrak{h}}) \ar[d]^{\alpha^*}\\
H^1(\mathfrak{g}'/\mathfrak{h}',H^1(\mathfrak{h}', M)) \ar[r]^-{\lambda}& H^3(\mathfrak{g}'/\mathfrak{h}',M^{\mathfrak{h}'}) }\] commutes. 
Take $[D] \in H^1(\mathfrak{g}/\mathfrak{h},H^1(\mathfrak{h}, M))$. 
If we define $D'=H^1(\mathbb{1},\alpha^*) \circ D \circ \overline{\alpha}$, it is clear that $\alpha^*[D]=[D']$. The projections $\Der (\Lh, M) \rtimes \Lg \rightarrow H^1(\Lh,M) \rtimes \Lg/\Lh $ and $\Der (\Lh', M) \rtimes \Lg' \rightarrow H^1(\Lh',M) \rtimes \Lg'/\Lh' $ are called $\Pi$ and $\Pi'$ respectively. Let  $\mathfrak{p}$ be the pull-back of $s_{D}$ and $\Pi$, and $\mathfrak{p}'$ the pull-back of $s_{D'}$ and $\Pi'$. 
Then $\lambda \circ \alpha^*[D]$ can be represented by
\[\underline{e} : \xymatrix{0 \ar[r] & M^{\mathfrak{h}'} \ar[r] & M \rtimes \mathfrak{h}' \ar[r] & \mathfrak{p}' \ar[r] & \mathfrak{g}'/\mathfrak{h}' \ar[r] & 0.} \]
On the other hand, we can represent $\alpha^* \circ \lambda[D]$ by the lower crossed extension in the diagram
\begin{equation}\label{dia}\xymatrix{ {\phantom{\underline{e}':}} 0 \ar[r] & M^{\mathfrak{h}} \ar@{=}[d]\ar[r] & M \rtimes \mathfrak{h} \ar@{=}[d] \ar[r] & \mathfrak{p} \ar[r] & \mathfrak{g}/\mathfrak{h} \ar[r] & 0 \\
{\phantom{\underline{e}':}} 0 \ar[r] & M^{\mathfrak{h}} \ar@{^(->}^{j}[d] \ar[r] & M \rtimes \mathfrak{h} \ar[d] \ar[r] & \mathfrak{e} \ar[u] \ar@{=}[d]\ar[r] & \mathfrak{g}'/\mathfrak{h}' \ar[u]^{\overline{\alpha}}\ar@{=}[d] \ar[r] & 0\\
\underline{e}':  0 \ar[r] & M^{\mathfrak{h}'} \ar[r] & \mathfrak{l} \ar[r] & \mathfrak{e} \ar[r] & \mathfrak{g}'/\mathfrak{h}' \ar[r] & 0,}\end{equation}
where the upper right hand square is a pull-back, and the lower left hand square is a push-out construction. Our goal is to find a crossed extension $\underline{e}''$ that is equivalent to the second row of the preceding diagram, such that $[\underline{e}]=j_*[\underline{e}'']$. In this case, we find $[\underline{e}] = [\underline{e}']$ as desired.

Observe that there is an exact sequence 
\[\xymatrix{0 \ar[r] & M^{\mathfrak{h}} \ar[r] & M \rtimes \mathfrak{h}' \ar[r]^-{\gamma''} & \mbox{Der}(\mathfrak{h},M) \rtimes \mathfrak{g}' \ar[r]^-{\Pi''} & H^1(\mathfrak{h},M) \rtimes \mathfrak{g}'/\mathfrak{h}' \ar[r] & 0,}\] where $\gamma '' (m,z)=(-\delta(m),z)$,
 $\mbox{Der}(\mathfrak{h},M)$ is a $\mathfrak{g}'$-module via $\alpha$, and $\Pi''$ is the projection map.
Define an action of $\mbox{Der}(\mathfrak{h},M) \rtimes \mathfrak{g}'$ on $M \rtimes \mathfrak{h}'$ by $^{(d,x)}(m,z)=(x \cdot m + d(\alpha(z)),[x,z])$. It is easy to show that this makes $\gamma '' : M \rtimes \mathfrak{h}' \rightarrow \mbox{Der}(\mathfrak{h},M) \rtimes \mathfrak{g}'$ into a crossed module. Take the pull-back $\mathfrak{p}''$ of $s_{D \circ \overline{\alpha}}$ and $\Pi''$. We claim that the sequence
\[\underline{e}'': \xymatrix{0 \ar[r] & M^{\mathfrak{h}} \ar[r] & M \rtimes \mathfrak{h}' \ar[r]^-{\gamma''} & \mathfrak{p}'' \ar[r]^-{\pi''} & \mathfrak{g}'/\mathfrak{h}' \ar[r] & 0}\] is the crossed extension we look for. 
To show that $\underline{e}''$ is equivalent to the second row in diagram (\ref{dia}), we have to construct a map $\beta: \mathfrak{p}'' \rightarrow \mathfrak{e}$. Using the pull-back property, take $\beta_1 = \mathbb{1} \rtimes \alpha : \mathfrak{p}'' \rightarrow \mathfrak{p}$ and $\beta_2 =\pi'':\mathfrak{p}'' \rightarrow \mathfrak{g}'/\mathfrak{h}'$ to construct $\beta$. One can check that we get a commuting diagram
\[\xymatrix{\underline{e}'':  0 \ar[r] & M^{\mathfrak{h}} \ar@{=}[d] \ar[r] & M \rtimes \mathfrak{h}' \ar[d]^{\mathbb{1} \rtimes \alpha} \ar[r] & \mathfrak{p}'' \ar[r]^-{\pi''} \ar[d]^{\beta} & \mathfrak{g}'/\mathfrak{h}' \ar@{=}[d]\ar[r] & 0\\
 {\phantom{\underline{e}'':}}0 \ar[r] & M^{\mathfrak{h}} \ar[r] & M \rtimes \mathfrak{h} \ar[r] & \mathfrak{e} \ar[r] & \mathfrak{g}'/\mathfrak{h}' \ar[r] & 0.}\] Furthermore, $(\mathbb{1} \rtimes \alpha,\beta)$ forms a morphism of crossed modules, so we indeed have an equivalence of crossed extensions. 
Now we show that $[\underline{e}]=j_*[\underline{e}'']$. It suffices to show that there exist maps $\nu$ and $\eta$ such that the diagram 
\[\xymatrix{\underline{e}'' :  0 \ar[r] & M^{\mathfrak{h}} \ar@{^(->}[d] \ar[r] & M \rtimes \mathfrak{h}' \ar[d] \ar[r] & \mathfrak{p}'' \ar[r] \ar@{=}[d] & \mathfrak{g}'/\mathfrak{h}' \ar@{=}[d]\ar[r] & 0\\
{\phantom{\underline{e}'':}} 0 \ar[r] & M^{\mathfrak{h}'} \ar@{=}[d] \ar[r] & \Ln \ar[d]^{\nu} \ar[r] & \mathfrak{p}'' \ar[r] \ar[d]^{\eta} & \mathfrak{g}'/\mathfrak{h}' \ar@{=}[d]\ar[r] & 0\\
\underline{e}{\phantom{''}}:  0 \ar[r] & M^{\mathfrak{h}'} \ar[r] & M \rtimes \mathfrak{h}' \ar[r] & \mathfrak{p}' \ar[r] & \mathfrak{g}'/\mathfrak{h}' \ar[r] & 0,}\]
commutes, where the upper left hand square is a push-out construction and $(\nu, \eta)$ is a morphism of crossed extensions.
Using the universal property of the push-out construction, we define $\nu : \Ln \rightarrow M \rtimes \mathfrak{h}'$ as the map that is the identical map on $M \rtimes \mathfrak{h}'$ and the inclusion $M^{\mathfrak{h}'} \hookrightarrow M \rtimes \Lh'$ on $M^{\mathfrak{h}'}$. Furthermore, one can check that the map $\mbox{Der}(\alpha,\mathbb{1}) \rtimes \mathbb{1}$ restricts to a map $\eta: \mathfrak{p}'' \rightarrow \mathfrak{p}'$, such that $(\nu, \eta)$ is a morphism of crossed modules. 
This finishes the proof.
 
\section{Cocycle description}\label{cocycle}
In this section, we use the notations as introduced for the construction of the maps. 
Fix a linear map $\alpha : \mathfrak{g}/\mathfrak{h} \rightarrow \mathfrak{g}$ such that $\pi \circ \alpha = \mbox{id}_{\mathfrak{g}/\mathfrak{h}}$, and set $f_{\alpha}(x \wedge y)=[\alpha(x),\alpha(y)]-\alpha[x,y] \in \mathfrak{h}$ for $x, \, y \in \mathfrak{g}/\mathfrak{h}$.

\subsection{The map $\tr$, first description}
Take a derivation $d : \Lh \rightarrow M$ with $[d] \in H^1(\Lh,M)^{\Lg/\Lh}$ and denote the associated partial splitting of $\underline{e}_0$ by $s_d$. Choose a section $s: \Lg \rightarrow I_{M \rtimes \Lg}(s_d(\Lh))$ of the extension
\[\xymatrix{0 \ar[r] & M^{\Lh} \ar[r]^-i & I_{M \rtimes \Lg}(s_d(\Lh)) \ar[r] & \Lg \ar[r] & 0,}\] and take the associated factor set $f_s : \Lg \wedge \Lg \rightarrow M^{\Lh}$, with $i \circ f_s(x \wedge y)=[s(x),s(y)]-s[x,y]$. Then the map $\overline{s} : \Lg / \Lh \wedge \Lg / \Lh \rightarrow I_{M \rtimes \Lh}(s_d(\Lh))$, defined by $\overline{s}(x \wedge y)=s( \alpha (x) \wedge \alpha(y))+s_d(\Lh)$, is a section of the extension $\underline{e}_d$ that represents $\tr[d]$. A straight-forward calculation shows that 
\[[s \circ \alpha(x), s \circ \alpha(y)]-s \circ \alpha[x,y]=s \circ f_{\alpha}(x \wedge y) + i \circ f_s(\alpha(x), \alpha(y)).\]
It follows that the cocycle $F : \Lg/\Lh \wedge \Lg / \Lh \rightarrow M^{\Lh}$ associated to $\underline{e}_d$ can be written as 
\[F(x \wedge y)=f_s(\alpha(x) \wedge \alpha(y))) + \overline{i}^{-1}(s \circ f_{\alpha}(x \wedge y) + s_d(\Lh)),\] where $\overline{i}$ denotes the injective map $M^{\Lh} \rightarrow I_{M \rtimes \Lg}(s_d(\Lh))/s_d(\Lh)$. If $s$ extends $s_d$, the expression becomes even simpler, since in this case we can write
\[F(x \wedge y)=f_s(\alpha(x),\alpha(y)).\]

\subsection{The map $\tr$, second description}
Take $[d] \in H^1(\mathfrak{h},M)^{\mathfrak{g}/\mathfrak{h}}$. 
Fix a map $\eta : \mathfrak{g}/\mathfrak{h} \rightarrow M$ such that for all $x \in \mathfrak{g}/\mathfrak{h}$, $^{\alpha(x)} d = \delta(\eta(x))$, where $\delta$ maps an element of $M$ to the associated inner derivation as before. By Lemma \ref{char_I}, we can take a section $\overline{s}(x)=(\eta(x),\alpha(x)) + s_d(\mathfrak{h})$ of the sequence 
\[\underline{e}':\xymatrix{0 \ar[r] & M^{\Lh} \ar[r]^-{\overline{i}} & I_{M \rtimes \mathfrak{g}}(s(\mathfrak{h}))/s_d(\mathfrak{h}) \ar[r] & \mathfrak{g}/\mathfrak{h} \ar[r] & 0.}\]
Using the correspondence of extensions and 2-cocycles, we get a cocycle $F : \mathfrak{g}/\mathfrak{h} \wedge \mathfrak{g}/\mathfrak{h} \rightarrow M^{\Lh}$ corresponding to $\underline{e}'$, defined by 
\[\overline{i}F(x \wedge y)=[\overline{s}(x),\overline{s}(y)]-\overline{s}[x,y].\] An easy calculation shows that 
\[\overline{i}F(x \wedge y)=\big(\alpha(x) \cdot \eta(y) - \alpha(y) \cdot \eta(x) - \eta[x,y], f_{\alpha}(x \wedge y)\big) + s_d(\mathfrak{h}).\] 
Since $((d \circ f_{\alpha})(x \wedge y),f_{\alpha}(x\wedge y))\in s_d(\mathfrak{h})$, it follows that 
\[F(x \wedge y)=\alpha(x) \cdot \eta(y) - \alpha(y) \cdot \eta(x) - \eta[x,y] - (d \circ f_{\alpha})(x \wedge y)\]
and $\tr[d]=[F]$. Observe that the first three terms are a formal coboundary expression of $\eta$ via $\alpha$.

\subsection{The map $\rho$}
Take $[f] \in H^2(\mathfrak{g},M)_1$, so $f$ is a bilinear alternating map $f : \mathfrak{g} \wedge \mathfrak{g} \rightarrow M$ and $f_{|\mathfrak{h} \wedge \mathfrak{h}} = \delta^1 \gamma$, where $\delta^1$ is the first coboundary map and $\gamma : \mathfrak{h} \rightarrow M$ is some linear map.
The cocycle corresponds to an extension
\[\xymatrix{0 \ar[r] & M \ar[r]^-{i} & M \times_f \mathfrak{g} \ar[r] & \mathfrak{g} \ar[r] & 0,}\]
where the Lie algebra in the middle has underlying vector space $M \times \mathfrak{g}$ and the Lie bracket is defined by $[(m,x),(m',x')]=(x \cdot m' - x' \cdot m + f(x \wedge x'), [x,x'])$. We choose a partial splitting $s_0 : \mathfrak{h} \rightarrow M$, defined by $z \mapsto (\gamma(z),z)$, which is a Lie algebra morphism since $f_{|\mathfrak{h} \wedge \mathfrak{h}} = \delta^1(\gamma)$. 
For $x \in \mathfrak{g}/\mathfrak{h}$, choose $(0,\alpha(x)) \in M \times_f \mathfrak{g}$ as pre-image under $\pi \circ p$. Observe that  \[d_{(0,\alpha(x))}(z)=i^{-1}\Big([(0,\alpha(x)),s_0(z)]-s_0[p(0,\alpha(x)),z]\Big)=f(\alpha(x) \wedge z) + \alpha(x) \cdot \gamma(z) - \gamma[\alpha(x),z].\]
It follows that we can represent $\rho[f]$ by the derivation $D : \mathfrak{g}/\mathfrak{h} \rightarrow H^1(\mathfrak{h},M)$, mapping $x$ to the class $[D_x]$, with $D_x(z)=f(\alpha(x) \wedge z)+ \alpha(x) \cdot \gamma(z) - \gamma[\alpha(x),z]$ for all $z \in \mathfrak{h}$. 

\subsection{The map $\lambda$}
Fix a map $s : H^1(\mathfrak{h},M) \rightarrow \mbox{Der}(\mathfrak{h},M)$ that is a section of the natural projection, and take a derivation $D$ with $[D] \in H^1(\Lg/\Lh,H^1(\Lh,M))$. It is easy to see that the map that takes $x$ to $(sD(x ), \alpha(x))$ is a section of the last map in the exact sequence 
\[\underline{e}_D:\xymatrix{0 \ar[r] & M^{\mathfrak{h}} \ar[r] & M \rtimes \mathfrak{h} \ar[r] & \mathfrak{p} \ar[r] & \mathfrak{g}/\mathfrak{h} \ar[r] & 0,}\] so we want to look at 
\[f(x , y)=[(sD(x),\alpha(x)),(sD(y),\alpha(y))]-(sD[x,y],\alpha[x,y]).\]
Observe that for all $x, \, y \in \Lg/\Lh$, the expression ${}^{\alpha(x)}sD(y) - ^{\alpha(y)}sD(x)-sD[x,y]$ represents an inner derivation, so we can fix a map $F : \mathfrak{g}/\mathfrak{h} \times  \mathfrak{g}/\mathfrak{h} \rightarrow M$ such that
\[\delta F(x , y)={}^{\alpha(x)}sD(y) - ^{\alpha(y)}sD(x)-sD[x,y],\]
where $\delta$ maps an element of $M$ to the associated inner derivation as before.
The map $F$ can be chosen to be linear and alternating, and it measures the defect of $s \circ D$ being a ``derivation'' with respect to the ``action'' of $\Lg / \Lh$ on $\Der (\Lh,M)$ defined through $\alpha$ (although this does not really define an action). 
Now it is straight-forward to see that $f(x , y)=(\delta F(x , y),f_{\alpha}(x , y))$. We can lift this towards a bilinear, alternating map $f' : \mathfrak{g}/\mathfrak{h} \times  \mathfrak{g}/\mathfrak{h} \rightarrow M \rtimes \mathfrak{h}$ by taking $f'(x , y)=(-F(x , y),f_{\alpha}(x, y))$. 
It is easy to show that the associated cocycle equals 
\[c(x \wedge y \wedge z)=-c'(x \wedge y \wedge z)+sD(x)(f_{\alpha}(y \wedge z)) - sD(y)(f_{\alpha}(x \wedge z)) + sD(z)(f_{\alpha}(x \wedge y)),\]
with 
\[\begin{array}{rcl}c'(x \wedge y \wedge z)&=&\alpha(x) \cdot F(y , z)-\alpha(y) \cdot F(x , z)+ \alpha(z) \cdot F(x , y) \\ & & - F([x,y] , z)+F([x,z] , y)-F([y,z] , x),\end{array}\]
the formal coboundary expression of $F$.

\section{Example: The Heisenberg Lie Algebras}\label{heisenberg_example}
In this section, we will not distinguish in notation between cocycles and their equivalence classes. 

Let $\mathfrak{g}_m$ be a Heisenberg Lie algebra over a field $k$: as a vector space it has basis \break $\{x, x_1, \cdots, x_m, y_1, y_2, \cdots, y_m\}$ and the only non-trivial Lie brackets in the generators are $[x_i,y_i]=x$ for $1 \leq i \leq m$. It is clear that the Lie subalgebra $\mathfrak{h}$ generated by $x$ is a one-dimensional ideal of $\mathfrak{g}_m$, while the quotient $\mathfrak{g}_m/\mathfrak{h}$ is a $2m$-dimensional abelian Lie algebra. It follows immediately that $\dim_k H^1(\mathfrak{g}_m/\mathfrak{h},k)=2m$, $\dim_k H^2(\mathfrak{g}_m/\mathfrak{h},k)=\binom{2m}{2}$ and $\dim_k H^3(\mathfrak{g}_m/\mathfrak{h},k) = \binom{2m}{3}$. The generators are respectively all maps $f_a : \mathfrak{g}_m / \Lh \rightarrow k$, $f_{a \wedge b} : \Lambda^2 (\mathfrak{g}_m / \Lh) \rightarrow k$, and $f_{a \wedge b \wedge c} : \Lambda^3(\mathfrak{g}_m /\Lh) \rightarrow k$, for all $a, \ b, \ c \in \{x_1, \cdots, x_m, y_1, \cdots, y_m\}$, each map sending the basis element in the subscript to 1, and the other basis elements of $\Lambda^i (\mathfrak{g}_m / \Lh)$ to zero. 

One can easily check (see also \cite{sant83-1}) that $H^1(\mathfrak{g}_m,k)$ has dimension $2m$, and consists of all $k$-homomorphisms $\mathfrak{g}_m \rightarrow k$ sending $x$ to zero. Moreover, $H^2(\mathfrak{g}_m, k)_1$ equals $H^2(\mathfrak{g}_m,k)$ since $\mathfrak{h}$ is a free Lie algebra on one element. We can choose basis elements $f_{a \wedge b}$ for all $a, b \in \{x_1, \cdots, x_m, y_1, \cdots, y_m\}$ with $a \wedge b \neq x_m \wedge y_m$. (Observe that $\sum_{i=1}^m f_{x_i \wedge y_i}$ is the coboundary of $f_x$ and therefore yields zero in the cohomology group.)
 
The action of $\mathfrak{g}_m / \mathfrak{h}$ on $H^1(\mathfrak{h},k)$ is trivial since $\mathfrak{h}$ is central in $\mathfrak{g}_m$, so $H^1(\mathfrak{h},k)^{\mathfrak{g}_m / \mathfrak{h}} \cong k$. Now we are ready to examine the maps in the exact sequence
\[\xymatrix{0 \ar[r] & H^1(\mathfrak{g}_m/\mathfrak{h},k) \ar[r]^-{\infl} & H^1(\mathfrak{g}_m,k) \ar[r]^-{\res} & H^1(\mathfrak{h},k)^{\mathfrak{g}_m / \mathfrak{h}} \ar[r]^{\tr} & H^2(\mathfrak{g}_m/\mathfrak{h},k) \ar[r]^-{\infl} & \break
H^2(\mathfrak{g}_m,k)_1} \]
\[\xymatrix{ \ar[r]^-{\rho}& H^1(\mathfrak{g}_m/\mathfrak{h},H^1(\mathfrak{h},k)) \ar[r]^-{\lambda} & H^3(\mathfrak{g}_m/\mathfrak{h},k)}.\]  
It is clear that the first morphism maps basis elements on basis elements and therefore yields the identical map after identification of both groups with $\bigoplus_{i=1}^{2m} k$. The second map is clearly trivial, whereas the second inflation map is identical on the basis elements $f_{a \wedge b}$ for $a \wedge b \neq x_m \wedge y_m$, and maps $f_{x_m \wedge y_m}$ onto $-\sum_{i=1}^{m-1} f_{x_i \wedge y_i}$. 

To find the other maps, we work with the cocycle descriptions, using the notations of Section \ref{cocycle}. If $\alpha : \mathfrak{g}_m / \mathfrak{h} \rightarrow \mathfrak{g}_m$ is the linear map that takes the basis elements of $ \mathfrak{g}_m / \mathfrak{h}$ to the corresponding basis elements in $\mathfrak{g}_m$, it is easy to see that $f_{\alpha}(x_i \wedge y_i)=1 \in k$ and $f_{\alpha}$ is zero on the other basis elements. We can choose $\eta \equiv 0$, so it follows that $\tr(f_x)=-\sum_{i=1}^m f_{x_i \wedge y_i}$. Since $\mathfrak{h}$ is generated by $x$, and $f_{a \wedge b}(c \wedge x)=0$ for all basis elements $c \in \mathfrak{g}_m$, it follows that $\rho \equiv 0$. To describe the last map, observe that $H^1(\mathfrak{h},k)=\mbox{Der}(\mathfrak{h},k)$, so we can take $s=\mbox{id}$ such that $F \equiv 0$ and $c' \equiv 0$. Denote by $D_{x_i}$ (resp.~$D_{y_i}$) the derivation mapping $x_i$ (resp.~$y_i$) to $f_x$ and mapping the other basis elements to zero. It is easy to see that the only cases for which $D_{a}(f_{\alpha}(b \wedge c))$ is non-zero is if $a=x_i$ or $y_i$, and $b$ and $c$ are $x_j$ and $y_j$, for some $i$ and $j$. It follows that $\lambda(D_{x_i})$ equals $\sum_{j=1}^m f_{x_i \wedge x_j \wedge y_j}$, and $\lambda(D_{y_i})$ equals $\sum_{j=1}^m f_{x_j \wedge y_j \wedge y_i}$.

\section{Huebschmann's Lemma for Lie Algebras}\label{huebschmann_section}
The main goal of this section is to prove Lemma \ref{Huebschmann} that gives a characterisation of crossed modules of Lie algebras whose equivalence class is trivial. Huebschmann has given such a characterisation in the case of groups in \cite[Section 10]{hueb80-1}. If $M$ is a $\Lg / \Lh$-module, one can alternatively deduce this characterisation from \cite{ratc80-1} or \cite{loda78-1}, once one verifies that Loday's map going into the relative cohomology group coincides with Ratcliffe's map. The approach in \cite{loda78-1} has been translated to the Lie algebra case by Kassel and Loday in \cite{kl82-1}, and can be adapted to the Lie algebra case if $M$ is a $\Lg / \Lh$-module. 
However, in the general case, we have to adapt Huebschmann's approach to give a formulation of the theorem in the setting of Lie algebras. 
We believe that homotopy of crossed $n$-fold extensions for Lie algebras (Subsection \ref{homotopy}) has not been defined before. Note that, though we introduce crossed $n$-fold extensions and complexes in general, we only need the case $n=2$ to prove the main result. 

\subsection{Crossed $n$-fold extensions}
Let $k$ be a ring. All Lie algebras will be modules over $k$. We use the classical concept of crossed modules to define (free) crossed complexes and (free) crossed $n$-fold extensions of Lie algebras. These definitions have been introduced by Huebschmann in \cite{hueb80-1} for the group case. We also mention some of their properties that will be needed later. 

A \emph{crossed complex} is a sequence 
\begin{equation}\label{crossed_complex}\underline{e}: \xymatrix{\cdots \ar[r] & C_3 \ar[r]^{\delta_3} & C_2 \ar[r]^{\delta_2} & \mathfrak{c}_1 \ar[r]^{\delta_1} & \mathfrak{g} }\end{equation}
with the following properties:
\begin{itemize}
	\item $\delta \circ \delta=0$;
	\item $\delta_1: \mathfrak{c}_1 \rightarrow \mathfrak{g}$ is a crossed module of Lie algebras (see Section \ref{third_cohom});
	\item for $i \geq 2$, $C_i$ is a $\mathfrak{q}$-module and $\delta_i$ is a $\mathfrak{q}$-module morphism, where $\mathfrak{q}$ is the cokernel of $\delta_1$. (Observe that $\Image \delta_2 \subseteq \Ker \delta_1$ is indeed a $\mathfrak{q}$-module.)
\end{itemize}
We will sometimes call this a crossed complex of $\mathfrak{q}$. If the sequence is exact, we call $\underline{e}$ a \emph{crossed resolution} of $\mathfrak{q}$. 
A \emph{crossed $n$-fold extension} of $\mathfrak{q}$ by a $\mathfrak{q}$-module $M$ is an exact sequence 
\begin{equation}\label{crossed_n_fold_extension}\xymatrix{0 \ar[r] &  M \ar[r]^-{i} & C_{n-1} \ar[r] & \cdots \ar[r] & C_2 \ar[r]^{\delta_2} & \mathfrak{c}_1 \ar[r]^{\delta_1} & \mathfrak{g} \ar[r]^{\pi} & \mathfrak{q} \ar[r] & 0},\end{equation}
where $\delta_1 : \mathfrak{c}_1 \rightarrow \mathfrak{g}$ is a crossed module,  $C_i$ is a $\mathfrak{q}$-module for $2 \leq i \leq n-1$ and the maps $\delta_i$ are $\Lq$-module homomorphisms for $2 \leq i \leq n-1$. This is a special case of a crossed resolution of $\mathfrak{q}$, and conversely, every crossed resolution of $\mathfrak{q}$ gives rise to a crossed $n$-fold extension, by taking $M = \Ker \delta_{n-1}$. A crossed 2-fold extension will simply be called a crossed extension. 
%
A \emph{morphism of crossed complexes} from $\underline{e}$ to $\underline{e}'$ is a sequence $\alpha=(\cdots, \alpha_2, \alpha_1, \alpha_0)$ that fits in the commutative diagram
\[\xymatrix{\underline{e} : & \cdots \ar[r] & C_2 \ar[d]^{\alpha_2} \ar[r]^{\delta_2} & \mathfrak{c}_1 \ar[d]^{\alpha_1} \ar[r]^{\delta_1} & \mathfrak{g} \ar[d]^{\alpha_0}\\
\underline{e}': & \cdots \ar[r] & C_2' \ar[r]^{\delta_2'} & \mathfrak{c}_1' \ar[r]^{\delta_1'} & \mathfrak{g}', }\]
such that $(\alpha_1, \alpha_0)$ is a morphism of crossed modules and for $i \geq 2$, $\alpha_i$ is a morphism of $\mathfrak{q}$-modules. Similarly, one defines morphisms $\alpha=(\alpha_n, \cdots, \alpha_1, \alpha_0)$ of crossed $n$-fold extensions.

Now we want to introduce the notion of free crossed complexes. 
By definition (see \cite{elli93-1}), a crossed module $\delta : \mathfrak{c} \rightarrow \mathfrak{g}$ is \emph{free} on a set map $\xi : X \rightarrow \mathfrak{g}$ if $X$ is a subset of $\mathfrak{c}$ such that the restriction of $\delta$ to $X$ equals $\xi$, and $\delta : \Lc \to \Lg$ satisfies the following universal property: for every crossed module $\delta':\mathfrak{c}' \rightarrow \mathfrak{g}$ and every set map $f : X \rightarrow \mathfrak{c}'$ with $\delta ' \circ f = \xi$, the map $f$ extends uniquely to a map $\widetilde{f} : \mathfrak{c} \rightarrow \mathfrak{c}'$ for which $(\widetilde{f}, \mathbb{1}_{\mathfrak{g}})$ is a morphism of crossed modules. The set $X$ together with the set map $\xi$ is called the \emph{basis} for the crossed module. It follows immediately from the definition that two free crossed modules with the same basis are isomorphic as crossed modules. 
If $\mathfrak{g}$ is a free Lie algebra, we call $\delta : \mathfrak{c} \rightarrow \mathfrak{g}$ \emph{totally free}. 

The construction of a free crossed module on a set map $\xi:X \rightarrow \mathfrak{g}$ is as follows (see \cite{elli93-1}). Let $k_X$ be the free $k$-module on $X$, and let $L$ be the induced $\mathfrak{g}$-module $L=U\mathfrak{g} \otimes_k k_X$. 
If we consider the adjoint action of $\mathfrak{g}$ on itself, there is a unique extension of $\xi$ to 
a $\mathfrak{g}$-module map $\xi': L \rightarrow \mathfrak{g}$. 
Let $\mathfrak{l}$ be the free Lie algebra on $L$. Using the universal property of the free Lie algebra, we see that there is a unique Lie algebra homomorphism $\delta_{\xi} : \mathfrak{l} \rightarrow \mathfrak{g}$, extending $\xi'$. 
Furthermore, there is a unique way to extend the $\mathfrak{g}$-action on $L$ to an action of $\mathfrak{g}$ on $\mathfrak{l}$ by derivations (see e.g.~\cite[p.~60]{baht87-1} or use an argument similar to the semi-direct product argument in the first part of the proof of Lemma \ref{up_to_homotopy}). Observe that, with this action, $\delta_{\xi}$ becomes a $\mathfrak{g}$-module morphism, since the adjoint action also acts by derivations. It is clear that the inclusion $L \hookrightarrow \mathfrak{l}$ preserves the action.
Finally, we take the quotient $\mathfrak{c}_{\xi}$ of $\mathfrak{l}$ by the ideal generated by the \emph{Peiffer elements} $^{\delta_{\xi}(x)}y-[x,y]$, $x, \, y \in \mathfrak{l}$. Since the $\mathfrak{g}$-action on $\mathfrak{l}$ takes Peiffer elements to Peiffer elements, there is an induced $\mathfrak{g}$-action by derivations on $\mathfrak{c}_{\xi}$.
It is clear that $\delta_{\xi} : \mathfrak{c}_{\xi} \rightarrow \mathfrak{g}$ is indeed a crossed module. We verify the universal property.
\begin{lemma}\label{free_crossed}
The crossed module $\delta_{\xi}: c_{\xi} \rightarrow \mathfrak{g}$ is the free crossed module on $\xi$.
\end{lemma}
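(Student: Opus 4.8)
The plan is to verify the universal property directly, threading a given set map $f : X \to \Lc'$ (with $\delta' \circ f = \xi$) through the three successive free constructions used to build $\Lc_{\xi}$, and checking at each stage that the resulting map respects the extra structure. So suppose $\delta' : \Lc' \to \Lg$ is a crossed module and $f : X \to \Lc'$ satisfies $\delta' \circ f = \xi$, and recall that $\Lc'$ carries a $\Lg$-action coming from its crossed module structure, with respect to which $\delta'$ is $\Lg$-equivariant (this is exactly the first crossed module axiom). First I would use that $k_X$ is the free $k$-module on $X$ and that $L = U\Lg \otimes_k k_X$ is the free $\Lg$-module on $X$: the map $f$, viewed as landing in the $\Lg$-module $\Lc'$, extends uniquely to a $\Lg$-module map $f_L : L \to \Lc'$ with $f_L(1 \otimes x) = f(x)$. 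Comparing on the generators $1 \otimes x$ shows $\delta' \circ f_L = \xi'$, since both sides are $\Lg$-module maps $L \to \Lg$ agreeing on a $\Lg$-module generating set. Next, since $\Ll$ is the free Lie algebra on the $k$-module $L$, the $k$-linear map $f_L$ extends uniquely to a Lie algebra homomorphism $\hat{f} : \Ll \to \Lc'$.

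The key step is to show that $\hat{f}$ is $\Lg$-equivariant for the derivation actions on $\Ll$ and $\Lc'$. I would fix $x \in \Lg$ and consider the $k$-linear map $D_x : \Ll \to \Lc'$, $D_x(w) = \hat{f}({}^{x} w) - {}^{x} \hat{f}(w)$. Using that $\Lg$ acts by derivations on both $\Ll$ and $\Lc'$ and that $\hat{f}$ is a Lie homomorphism, a short computation gives the twisted Leibniz identity $D_x([w,w']) = [D_x(w), \hat{f}(w')] + [\hat{f}(w), D_x(w')]$. Since $f_L$ is a $\Lg$-module map, $D_x$ vanishes on $L$; the Leibniz identity then shows that $\{w \in \Ll : D_x(w) = 0\}$ is a Lie subalgebra containing $L$, hence all of $\Ll$. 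Thus $\hat{f}$ preserves the action. By the same ``agree on $L$, both are Lie homomorphisms'' argument, $\delta' \circ \hat{f} = \delta_{\xi}$ as maps $\Ll \to \Lg$, because $\delta_{\xi}$ extends $\xi'$ and $\delta' \circ \hat{f}$ agrees with $\xi'$ on $L$.

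It remains to pass to the Peiffer quotient. For $w, w' \in \Ll$ I would compute $\hat{f}({}^{\delta_{\xi}(w)} w') = {}^{\delta_{\xi}(w)} \hat{f}(w') = {}^{\delta'(\hat{f}(w))} \hat{f}(w')$, using equivariance and $\delta' \circ \hat{f} = \delta_{\xi}$; the Peiffer identity in the crossed module $\Lc'$ then turns the right-hand side into $[\hat{f}(w), \hat{f}(w')] = \hat{f}([w,w'])$. Hence $\hat{f}$ annihilates every Peiffer element ${}^{\delta_{\xi}(w)} w' - [w,w']$, so it vanishes on the ideal they generate and descends to a Lie homomorphism $\widetilde{f} : \Lc_{\xi} \to \Lc'$. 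Passing the relations $\delta' \circ \hat{f} = \delta_{\xi}$ and $\Lg$-equivariance to the quotient shows that $(\widetilde{f}, \mathbb{1}_{\Lg})$ is a morphism of crossed modules, while $\widetilde{f}(x) = f_L(1 \otimes x) = f(x)$ for $x \in X$, so $\widetilde{f}$ extends $f$. For uniqueness, note that any crossed module morphism $(\widetilde{f}', \mathbb{1}_{\Lg})$ extending $f$ is a $\Lg$-equivariant Lie homomorphism, hence determined on the $\Lg$-submodule generated by $X$ --- which is the image of $L$ --- and then on all of $\Lc_{\xi}$, since $\Lc_{\xi}$ is generated as a Lie algebra by that image. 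I expect the equivariance step to be the only genuine obstacle; the rest is a formal chaining of the universal properties of $k_X$, the induced module $L$, the free Lie algebra $\Ll$, and the Peiffer quotient.
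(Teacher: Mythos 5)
Your proposal is correct and follows essentially the same route as the paper: extend $f$ through the chain $k_X \to L \to \Ll \to \Lc_{\xi}$ using the universal property at each stage, check $\Lg$-equivariance and $\delta'\circ\hat f=\delta_{\xi}$, use the Peiffer identity in $\Lc'$ to descend to the quotient, and prove uniqueness by noting that a $\Lg$-equivariant Lie homomorphism is determined by its values on $X$. The only difference is that you spell out the equivariance of $\hat f$ via the derivation $D_x$ and a generation argument, a verification the paper leaves implicit.
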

\begin{proof}
Let $\delta' : \mathfrak{c}' \rightarrow \mathfrak{g}$ be a crossed module and let $f : X \rightarrow \mathfrak{c}'$ be a set map with $\delta' \circ f = \xi$. We have to prove that there is a unique morphism of Lie algebras $\widetilde{f}: \Lc_{\xi} \rightarrow \Lc'$, extending $f$ and yielding a morphism of crossed modules $(\widetilde{f},\mathbb{1}_{\mathfrak{g}})$. The proof follows the same steps as the construction of the free crossed module. 

Observe that there is a unique linear map $k_X \rightarrow \mathfrak{c}'$ extending $f$. 
Since $L$ is an induced $\mathfrak{g}$-module, there is a unique linear map $f':L \rightarrow \mathfrak{c}'$ extending $f$ and preserving the $\mathfrak{g}$-action. 
This map extends to a $\mathfrak{g}$-equivariant Lie algebra morphism $\mathfrak{l} \rightarrow \mathfrak{c}'$, that factors through the quotient map $\mathfrak{l} \rightarrow \mathfrak{c}_{\xi}$ since $\mathfrak{c}'$ is a crossed module. This yields the required map $\widetilde{f}$. Uniqueness of the extension at every step guarantees that $\delta ' \circ \widetilde{f}=\delta_{\xi}$. 

To prove uniqueness of $\widetilde{f}$, suppose that there is another map $\phi : \mathfrak{c}_{\xi} \rightarrow \mathfrak{c}'$ extending $f$, and making $(\phi,\mathbb{1}_{\mathfrak{g}})$ into a morphism of crossed modules, or equivalently, $\delta' \circ \phi = \delta_{\xi}$ and $\phi$ preserves the action. 
Observe that the composition $\phi': \mathfrak{l} \rightarrow \mathfrak{c}_{\xi} \overset{\phi}{\rightarrow} \mathfrak{c}'$ also preserves the $\mathfrak{g}$-action. 
It is clear that, on $k_X$, $\phi'$ has to coincide with the linear extension of $f$. 
On the other hand, since $L \rightarrow \mathfrak{l}$ is $\mathfrak{g}$-equivariant, the restriction of $\phi'$ to $L$ will still preserve the $\mathfrak{g}$-action. 
By the universal property of the induced module, this shows that $\phi'$ and $f'$ coincide on $L$. 
By the universal property of the free Lie algebra, it follows that the maps coincide on $\mathfrak{l}$, and therefore $\phi=\widetilde{f}$. 
\end{proof}

One can prove the following lemma (compare in \cite[Lemma 2]{whit49-1}).
\begin{lemma}\label{lemma_free_crossed_module}
Given a free crossed module $\delta_{\xi}: \mathfrak{c}_{\xi} \rightarrow \mathfrak{g}$ on $\xi : X \rightarrow \mathfrak{g}$. Suppose $\delta' : \mathfrak{c}' \rightarrow \mathfrak{g}'$ is another crossed module, and $\phi:\mathfrak{g} \rightarrow \mathfrak{g}'$ is a Lie algebra morphism. If $f : X \rightarrow \mathfrak{c}'$ is a set map with $\delta' \circ f=\phi \circ \xi$, there is a unique map $\widetilde{f} : \mathfrak{c}_{\xi} \rightarrow \mathfrak{c}'$ extending $f$, such that $(\widetilde{f},\phi)$ is a morphism of crossed modules.  
\end{lemma}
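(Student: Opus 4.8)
The plan is to mimic the construction of $\widetilde{f}$ given in the proof of Lemma \ref{free_crossed}, the only difference being that the target crossed module now lives over $\mathfrak{g}'$ rather than over $\mathfrak{g}$. To bridge this gap, I would first regard $\mathfrak{c}'$ as a $\mathfrak{g}$-module by letting $\mathfrak{g}$ act through $\phi$, that is, ${}^x c := {}^{\phi(x)} c$ for $x \in \mathfrak{g}$ and $c \in \mathfrak{c}'$. Since $\phi$ is a Lie algebra morphism and the $\mathfrak{g}'$-action on $\mathfrak{c}'$ is by derivations, this is again an action of $\mathfrak{g}$ by derivations. The hypothesis $\delta' \circ f = \phi \circ \xi$ is exactly what replaces the condition $\delta' \circ f = \xi$ used in Lemma \ref{free_crossed}.

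Then I would build $\widetilde{f}$ step by step along the construction of $\mathfrak{c}_{\xi}$. First extend $f : X \to \mathfrak{c}'$ uniquely to a $k$-linear map $k_X \to \mathfrak{c}'$; since $L = U\mathfrak{g} \otimes_k k_X$ is the induced $\mathfrak{g}$-module, this extends uniquely to a $\mathfrak{g}$-module map $f' : L \to \mathfrak{c}'$ (with the action on $\mathfrak{c}'$ defined via $\phi$). By the universal property of the free Lie algebra $\mathfrak{l}$ on $L$, $f'$ extends to a unique Lie algebra morphism $\hat{f} : \mathfrak{l} \to \mathfrak{c}'$, which is moreover $\mathfrak{g}$-equivariant because its restriction to the generating module $L$ is.

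The crux is to show that $\hat{f}$ annihilates the Peiffer elements ${}^{\delta_{\xi}(x)} y - [x,y]$ (for $x,y \in \mathfrak{l}$), so that it descends to a map $\widetilde{f} : \mathfrak{c}_{\xi} \to \mathfrak{c}'$. For this I would first establish the identity $\delta' \circ \hat{f} = \phi \circ \delta_{\xi}$ on all of $\mathfrak{l}$. Both sides are Lie algebra morphisms $\mathfrak{l} \to \mathfrak{g}'$, so by freeness of $\mathfrak{l}$ it suffices to check equality on $L$, where the two sides read $\delta' \circ f'$ and $\phi \circ \xi'$. Both are $\mathfrak{g}$-module maps $L \to \mathfrak{g}'$, where $\mathfrak{g}$ acts on $\mathfrak{g}'$ through $\phi$ and the adjoint action: for $\phi \circ \xi'$ this uses that $\xi'$ is $\mathfrak{g}$-equivariant and $\phi$ is a morphism, while for $\delta' \circ f'$ it uses the crossed-module axiom $\delta'({}^{n'} c) = [n', \delta'(c)]$; moreover both restrict to $\phi \circ \xi = \delta' \circ f$ on $X$. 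By the universal property of the induced module $L$ they coincide, which gives $\delta' \circ \hat{f} = \phi \circ \delta_{\xi}$. Now applying $\hat{f}$ to a Peiffer element yields ${}^{\phi(\delta_{\xi}(x))} \hat{f}(y) - [\hat{f}(x), \hat{f}(y)]$ by $\mathfrak{g}$-equivariance and multiplicativity of $\hat{f}$; since $\phi(\delta_{\xi}(x)) = \delta'(\hat{f}(x))$, the Peiffer identity ${}^{\delta'(\hat{f}(x))} \hat{f}(y) = [\hat{f}(x), \hat{f}(y)]$ in $\mathfrak{c}'$ makes this vanish.

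Finally I would record that $\widetilde{f}$ extends $f$, that $(\widetilde{f}, \phi)$ is a morphism of crossed modules (commutativity descends from $\delta' \circ \hat{f} = \phi \circ \delta_{\xi}$, and preservation of the action descends from the $\mathfrak{g}$-equivariance of $\hat{f}$), and that $\widetilde{f}$ is unique. For uniqueness, any $\phi'' : \mathfrak{c}_{\xi} \to \mathfrak{c}'$ with the same properties, precomposed with the quotient map $\mathfrak{l} \to \mathfrak{c}_{\xi}$, gives a $\mathfrak{g}$-equivariant Lie algebra morphism $\mathfrak{l} \to \mathfrak{c}'$ agreeing with the linear extension of $f$ on $k_X$, hence with $f'$ on $L$ by the universal property of the induced module, hence with $\hat{f}$ on $\mathfrak{l}$ by freeness; therefore $\phi'' = \widetilde{f}$. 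The main obstacle is the Peiffer-vanishing step, specifically the verification that $\delta' \circ \hat{f}$ and $\phi \circ \delta_{\xi}$ agree, which is exactly where the hypothesis $\delta' \circ f = \phi \circ \xi$ and the crossed-module axioms of $\mathfrak{c}'$ genuinely enter.
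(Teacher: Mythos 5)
Your proof is correct and is precisely the argument the paper intends: the paper states that the proof is analogous to that of Lemma \ref{free_crossed} and leaves it to the reader, and you carry out that analogy faithfully, with the right adaptation (twisting the $\mathfrak{g}$-action on $\mathfrak{c}'$ through $\phi$) and the right key intermediate identity $\delta'\circ\hat{f}=\phi\circ\delta_{\xi}$ needed for the Peiffer-vanishing and uniqueness steps.
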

The proof is analogous to the proof of Lemma \ref{free_crossed}, and is left to the reader. 

A \emph{free crossed complex (resolution)} is a crossed complex (resolution) of the form (\ref{crossed_complex}) where $\mathfrak{g}$ is a free Lie algebra, $\delta_1: \mathfrak{c}_1 \rightarrow \mathfrak{g}$ is a free crossed module (so it is totally free) and $C_i$ is a free $\mathfrak{q}$-module for $i \geq 2$. 
Fix an arbitrary Lie algebra $\mathfrak{q}$. We show that we can always construct a free crossed resolution of $\mathfrak{q}$ (compare \cite[Proposition 2]{hueb80-1}). Choose a generating set $X$ of $\mathfrak{q}$, and let $\mathfrak{g}$ be the free Lie algebra on $X$, with projection map $\pi : \mathfrak{g} \rightarrow \mathfrak{q}$. Choose a set $R \subseteq \mathfrak{g}$ that generates $\Ker \pi$ as an ideal of $\mathfrak{g}$. If $\xi : R \rightarrow \mathfrak{g}$ is the injection map, one can check that the sequence
\[\xymatrix{\mathfrak{c}_{\xi} \ar[r]^{\delta_{\xi}} & \mathfrak{g} \ar[r]^{\pi} & \mathfrak{q} \ar[r] & 0}\]
is exact, and $\delta_{\xi} : \mathfrak{c}_{\xi} \rightarrow \mathfrak{g}$ is totally free. Now take a free $\mathfrak{q}$-module $C_2$ mapping onto $\Ker \delta_{\xi}$, and define $\delta_2$ as the composition $C_2 \rightarrow \Ker \delta_{\xi} \rightarrow \mathfrak{c}_{\xi}$. This yields again an exact sequence. If we continue this way, we get a free crossed resolution
\begin{equation}\label{free_crossed}\underline{C}: \xymatrix{\cdots \ar[r] & C_3 \ar[r]^{\delta_3} & C_2 \ar[r]^{\delta_2} & \mathfrak{c}_{\xi} \ar[r]^{\delta_{\xi}} & \mathfrak{g} }\end{equation} of $\mathfrak{q}$.
In the same spirit, a \emph{free crossed $n$-fold extension} of $\mathfrak{q}$ by $M$ is a crossed $n$-fold extension of the form (\ref{crossed_n_fold_extension}) where $\delta_1 : \mathfrak{c}_1 \rightarrow \mathfrak{g}$ is totally free, and for $2 \leq i \leq n-1$, $C_i$ is a free $\mathfrak{q}$-module. It is obvious that a free crossed resolution (\ref{free_crossed}) of $\mathfrak{q}$ yields a free crossed $n$-fold extension of $\mathfrak{q}$ by $J_n$,
\[\underline{C}^n : \xymatrix{0 \ar[r] & J_n \ar[r] & C_{n-1} \ar[r]^-{\delta_{n-1}} & C_{n-2} \ar[r] & \cdots \ar[r] & \mathfrak{c}_{\xi} \ar[r]^{\delta_{\xi}} & \mathfrak{g} \ar[r]^{\pi} & \mathfrak{q} \ar[r] & 0,}\]  where $J_n$ is defined as $\Ker \delta_{n-1}$.

It is a well-known result that, in the classical setting, there always exists a map from a free complex to a resolution, lifting a given map on the right hand side. In the crossed setting, we have a similar result (compare \cite[Proposition 3]{hueb80-1}).
\begin{lemma} \label{lemma_free_crossed_complex}
Let $\underline{e}$ be a free crossed complex of $\mathfrak{q}$ and let $\underline{e}'$ be a crossed resolution of $\mathfrak{q}'$. Now any Lie algebra morphism $\phi : \mathfrak{q} \rightarrow \mathfrak{q}'$ can be lifted to a morphism of crossed complexes $\alpha: \underline{e} \rightarrow \underline{e}'$ (so $\phi \circ \delta_1=\pi' \circ \alpha_0$). 
\end{lemma}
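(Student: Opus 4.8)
The plan is to mimic the classical comparison theorem from homological algebra, constructing the morphism $\alpha = (\ldots, \alpha_2, \alpha_1, \alpha_0)$ by induction on the degree, starting from $\alpha_0$. At each stage I define the map on a generating set of the \emph{free} object appearing in $\underline{e}$ and extend it via the appropriate universal property, while exactness of the \emph{resolution} $\underline{e}'$ guarantees that the values I want to assign can always be lifted along the next differential.

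First I would build $\alpha_0 : \Lg \rightarrow \Lg'$. Since $\underline{e}$ is free, $\Lg$ is a free Lie algebra on some generating set $X_0$, and since $\underline{e}'$ is a resolution its augmentation $\pi' : \Lg' \rightarrow \Lq'$ is surjective. For each $x \in X_0$ choose $\alpha_0(x) \in \Lg'$ with $\pi'(\alpha_0(x)) = \phi(\pi(x))$; the universal property of the free Lie algebra extends this to a Lie algebra morphism $\alpha_0$, and as $\pi' \circ \alpha_0$ and $\phi \circ \pi$ agree on $X_0$ they coincide, giving the relation $\phi \circ \pi = \pi' \circ \alpha_0$.

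Next comes $\alpha_1 : \Lc_1 \rightarrow \Lc_1'$, where $\delta_1 : \Lc_1 \rightarrow \Lg$ is totally free, say on $\xi : R \rightarrow \Lg$. For $r \in R$ one has $\pi'(\alpha_0(\xi(r))) = \phi(\pi(\delta_1(r))) = 0$, so $\alpha_0(\xi(r)) \in \Ker \pi' = \Image \delta_1'$ by exactness of $\underline{e}'$; choosing preimages yields a set map $f : R \rightarrow \Lc_1'$ with $\delta_1' \circ f = \alpha_0 \circ \xi$. Lemma \ref{lemma_free_crossed_module} then produces a unique $\alpha_1$ extending $f$ such that $(\alpha_1, \alpha_0)$ is a morphism of crossed modules and $\delta_1' \circ \alpha_1 = \alpha_0 \circ \delta_1$, which is exactly what the morphism of crossed complexes requires in this degree.

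For the higher terms I proceed by induction on $i \geq 2$: $C_i$ is a free $\Lq$-module and $C_i'$ is viewed as a $\Lq$-module through $\phi$. For a generator $e$ of $C_i$ the element $\delta_i(e)$ lies in $\Ker \delta_{i-1}$; applying $\delta_{i-1}'$ to $\alpha_{i-1}(\delta_i(e))$ and using the commutation relation already established in degree $i-1$ (which in the base case $i=2$ is $\delta_1' \circ \alpha_1 = \alpha_0 \circ \delta_1$) gives $\alpha_{i-2}(\delta_{i-1}\delta_i(e)) = 0$, so $\alpha_{i-1}(\delta_i(e)) \in \Ker \delta_{i-1}' = \Image \delta_i'$ by exactness. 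I pick a preimage $\alpha_i(e)$ and extend $\Lq$-linearly. The main obstacle I anticipate is the junction between the crossed-module part (degrees $0,1$) and the module part (degree $2$): to deduce $\delta_2' \circ \alpha_2 = \alpha_1 \circ \delta_2$ from agreement on generators I must know that $\alpha_1 \circ \delta_2$ is a $\Lq$-module morphism, even though $\alpha_1$ is a priori only a crossed-module morphism. This holds because $\alpha_1$ restricts to a $\Lq$-module map $\Ker \delta_1 \rightarrow \Ker \delta_1'$: the $\Lq$-actions on these central kernels are induced by the crossed-module actions of $\Lg$ and $\Lg'$, and since $(\alpha_1,\alpha_0)$ preserves the action while $\pi' \circ \alpha_0 = \phi \circ \pi$, one computes $\alpha_1(\bar{g} \cdot z) = \phi(\bar{g}) \cdot \alpha_1(z)$, which is precisely $\Lq$-linearity through $\phi$. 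Granting this, each commutation relation is an identity of $\Lq$-module maps that agree on a generating set, hence holds, and the induction closes.
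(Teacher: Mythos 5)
Your proof is correct and follows exactly the route the paper indicates in its one-sentence sketch: freeness of $\Lg$ for $\alpha_0$, Lemma \ref{lemma_free_crossed_module} together with exactness of $\underline{e}'$ for $\alpha_1$, and the classical comparison argument for the free $\Lq$-modules in degrees $i \geq 2$. You in fact supply a detail the paper leaves implicit, namely that $\alpha_1$ restricted to $\Ker\delta_1$ is $\Lq$-linear through $\phi$, which is precisely what is needed to splice the crossed-module part onto the classical induction.
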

To prove this lemma, one uses the fact that $\mathfrak{g}$ is a free Lie algebra, together with Lemma \ref{lemma_free_crossed_module} and the result in the classical case. 
It is immediate that this implies that there always exists a morphism from the free crossed $n$-fold extension $\underline{C}^n$ to any other crossed $n$-fold extension, lifting a given map $\phi : \mathfrak{q} \rightarrow \mathfrak{q}'$ (compare \cite[Proposition 3$'$]{hueb80-1}).

\subsection{Homotopy}\label{homotopy}
Let $\underline{e}$ and $\underline{e}'$ be two crossed complexes and suppose we are given two morphisms of crossed complexes $\alpha=(\cdots, \alpha_2, \alpha_1, \alpha_0, \psi)$ and $\beta=(\cdots, \beta_2, \beta_1, \beta_0, \psi)$ that coincide on $\mathfrak{q}$. 
\[\xymatrix{\underline{e} {\phantom{'}}: \  \cdots \ar[r] & C_2 \ar@<-0.5ex>[d]_{\alpha_2} \ar@<0.5ex>[d]^{\beta_2} \ar[r]^{\delta_2} & \mathfrak{c}_{1} \ar@<-0.5ex>[d]_{\alpha_1} \ar@<0.5ex>[d]^{\beta_1} \ar[r]^{\delta_1} & \mathfrak{g} \ar@<-0.5ex>[d]_{\alpha_0} \ar@<0.5ex>[d]^{\beta_0} \ar[r]^{\pi} & \mathfrak{q}\ar[d]^{\psi} \ar[r] & 0\\\underline{e}':  \ \cdots \ar[r] & C_2' \ar[r]^{\delta_2'} & \mathfrak{c}_{1}' \ar[r]^{\delta_1'} & \mathfrak{g}' \ar[r]^{\pi'} & \mathfrak{q}' \ar[r] & 0}\] A \emph{homotopy from $\alpha$ to $\beta$} is a sequence of maps $\Sigma=(\cdots, \Sigma_2, \Sigma_1, \Sigma_0)$, with $\Sigma_0: \mathfrak{g} \rightarrow \mathfrak{c}_{1}'$, $\Sigma_1: \mathfrak{c}_1 \rightarrow C_2'$ and $\Sigma_i : C_i \rightarrow C_{i+1}'$ for $i \geq 2$, such that the following properties hold:
\begin{itemize}
	\item the map $\Sigma_0$ is a linear map that satisfies the relations $\Sigma_0[x,y]={}^{\alpha_0(x)}\Sigma_0(y)-{}^{\beta_0(y)}\Sigma_0(x)$ and $\delta_1' \circ \Sigma_0 = \alpha_0 - \beta_0$;
	\item the map $\Sigma_1$ is a linear map that satisfies the relations $\Sigma_1(^{x}y)={}^{\pi'\alpha_0 (x)}\Sigma_1(y)$ ($={}^{\psi \pi (x)}\Sigma_1(y)$), $\Sigma_1[x,y]=0$ and $\delta_2' \circ \Sigma_1 + \Sigma_0 \circ \delta_1 = \alpha_1 - \beta_1$;
	\item for $i \geq 2$, $\Sigma_i$ is a morphism of $\mathfrak{q}$-modules and $\delta_{i+1}' \circ \Sigma_i + \Sigma_{i-1} \circ \delta_i=\alpha_i - \beta_i$.
\end{itemize}   Observe that, if $\delta_1 ' \circ \Sigma_0=\alpha_0 - \beta_0$, the fact that $\Sigma_0[x,y]={}^{\alpha_0(x)}\Sigma_0(y)-{}^{\beta_0(y)}\Sigma_0(x)$ is equivalent to  $\Sigma_0[x,y]=[\Sigma_0(x),\Sigma_0(y)]+^{\beta_0(x)}\Sigma_0(y)-{}^{\beta_0(y)}\Sigma_0(x)$, so we can give an equivalent definition of homotopy accordingly.
(Compare to the definition of homotopy in \cite[Section 6]{hueb80-1}.)
\begin{lemma}
The relation ``being homotopic'' is an equivalence relation. 
\end{lemma}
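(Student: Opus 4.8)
The plan is to verify the three defining properties of an equivalence relation by exhibiting, in each case, an explicit homotopy built from the given data. For reflexivity I would take the zero sequence $\Sigma = (\ldots, 0, 0, 0)$: since $\alpha_0 - \alpha_0 = 0$, $\alpha_1 - \alpha_1 = 0$, and so on in every degree, each defining relation collapses to $0 = 0$, and the linearity and $\mathfrak{q}$-module-morphism requirements are trivially met. For symmetry, given a homotopy $\Sigma$ from $\alpha$ to $\beta$, I would propose $-\Sigma = (\ldots, -\Sigma_2, -\Sigma_1, -\Sigma_0)$ as a homotopy from $\beta$ to $\alpha$. For transitivity, given $\Sigma \colon \alpha \to \beta$ and $T \colon \beta \to \gamma$, I would propose the termwise sum $\Sigma + T$ as a homotopy from $\alpha$ to $\gamma$.

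Most of the verifications are immediate because the relevant conditions are additive in $\Sigma$ and in the pair $(\alpha,\beta)$. The relations $\delta_1' \circ \Sigma_0 = \alpha_0 - \beta_0$, $\delta_2' \circ \Sigma_1 + \Sigma_0 \circ \delta_1 = \alpha_1 - \beta_1$, and $\delta_{i+1}' \circ \Sigma_i + \Sigma_{i-1} \circ \delta_i = \alpha_i - \beta_i$ all behave linearly: negating $\Sigma$ negates both sides (turning $\alpha_0 - \beta_0$ into $\beta_0 - \alpha_0$), while for the sum the right-hand sides telescope, e.g.\ $(\alpha_1 - \beta_1) + (\beta_1 - \gamma_1) = \alpha_1 - \gamma_1$, and likewise in each degree. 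The conditions $\Sigma_1({}^{x}y) = {}^{\psi\pi(x)}\Sigma_1(y)$ and $\Sigma_1[x,y] = 0$, together with the requirement that each $\Sigma_i$ ($i \ge 2$) be a $\mathfrak{q}$-module morphism, are linear in $\Sigma$ and independent of $\alpha$ and $\beta$, so they pass to $-\Sigma$ and $\Sigma + T$ without difficulty.

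The one genuinely nontrivial point, which I expect to be the main obstacle, is the quadratic relation on $\Sigma_0$, namely $\Sigma_0[x,y] = {}^{\alpha_0(x)}\Sigma_0(y) - {}^{\beta_0(y)}\Sigma_0(x)$. This relation is \emph{not} symmetric in $\alpha_0$ and $\beta_0$, so negation and addition do not preserve it verbatim; it must be massaged using the crossed module axiom ${}^{\delta_1'(m)}m' = [m, m']$ for $\mathfrak{c}_1'$. The key identity is ${}^{\alpha_0(x)}\Sigma_0(y) - {}^{\beta_0(x)}\Sigma_0(y) = {}^{\delta_1'(\Sigma_0(x))}\Sigma_0(y) = [\Sigma_0(x), \Sigma_0(y)]$, obtained from $\alpha_0 - \beta_0 = \delta_1' \circ \Sigma_0$ and the Peiffer identity; this is precisely the equivalent formulation $\Sigma_0[x,y] = [\Sigma_0(x), \Sigma_0(y)] + {}^{\beta_0(x)}\Sigma_0(y) - {}^{\beta_0(y)}\Sigma_0(x)$ recorded just before the statement. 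Using it, I would check that for $-\Sigma_0$ the required relation $\Sigma_0[x,y] = {}^{\beta_0(x)}\Sigma_0(y) - {}^{\alpha_0(y)}\Sigma_0(x)$ reduces, after substituting $\delta_1'\Sigma_0(y) = \alpha_0(y) - \beta_0(y)$, to the antisymmetry $[\Sigma_0(x), \Sigma_0(y)] = -[\Sigma_0(y), \Sigma_0(x)]$; and that for $\Sigma_0 + T_0$ the two cross terms ${}^{\beta_0(x)}T_0(y) - {}^{\alpha_0(x)}T_0(y)$ and ${}^{\beta_0(y)}\Sigma_0(x) - {}^{\gamma_0(y)}\Sigma_0(x)$ both collapse to $\pm[\Sigma_0(x), T_0(y)]$ via $\alpha_0 - \beta_0 = \delta_1'\Sigma_0$ and $\beta_0 - \gamma_0 = \delta_1' T_0$, leaving exactly the relation for the composite $\alpha \to \gamma$. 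Once this bracket bookkeeping on $\Sigma_0$ is settled, all three properties follow, and the relation is an equivalence relation.
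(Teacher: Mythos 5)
Your proposal is correct and follows essentially the same route as the paper: reflexivity via the zero homotopy, symmetry via $-\Sigma$, transitivity via the termwise sum, with the only nontrivial point being the non-symmetric bracket condition on $\Sigma_0$, which you resolve exactly as the paper does by combining $\delta_1'\circ\Sigma_0=\alpha_0-\beta_0$ with the Peiffer identity ${}^{\delta_1'(m)}m'=[m,m']$. The bracket bookkeeping you describe for both the negation and the sum is precisely the computation carried out in the paper's proof.
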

\begin{proof}
For $\alpha=\beta$, we take $\Sigma_i \equiv 0$ for $i \geq 0$. This is indeed a homotopy from $\alpha$ to itself. To prove symmetry, we take a homotopy $\Sigma=(\cdots, \Sigma_1, \Sigma_0)$ from $\alpha$ to $\beta$, and show that $-\Sigma=(\cdots, -\Sigma_1, -\Sigma_0)$ is a homotopy from $\beta$ to $\alpha$. 
The only difficulty lies in the behaviour of $-\Sigma_0$ on Lie brackets. 
Since $\delta_1' \circ \Sigma_0 = \alpha_0 - \beta_0$ and ${}^{\delta_1'(x)}y=[x,y]$ for $x, \, y \in \mathfrak{c}_1'$, one can see
that ${}^{\alpha_0(x)}\Sigma_0(y)={}^{\beta_0(x)}\Sigma_0(y)+[\Sigma_0(x),\Sigma_0(y)]$ and ${}^{\beta_0(y)}\Sigma_0(x)={}^{\alpha_0(y)}\Sigma_0(x)-[\Sigma_0(y),\Sigma_0(x)]$. 
It is now easy to show that $-\Sigma$ is a homotopy from $\beta$ to $\alpha$. 
Furthermore, suppose that $\Sigma$ is a homotopy from $\alpha$ to $\beta$ and $\Sigma'$ is a homotopy from $\beta$ to $\gamma$. We claim that the sum $\Sigma+\Sigma'=(\cdots, \Sigma_1 + \Sigma_1', \Sigma_0 + \Sigma_0')$ is a homotopy from $\alpha$ to $\gamma$. The second and third conditions are easily verified. 
 Using the fact that $\delta_1' \circ \Sigma_0 = \alpha_0 - \beta_0$ and $\delta_1' \circ \Sigma_0' = \beta_0 - \gamma_0$, it is easy to verify that ${}^{\alpha_0(x)}\Sigma'_0(y)-{}^{\gamma_0(y)}\Sigma_0(x)={}^{\beta_0(x)}\Sigma_0'(y)-{}^{\beta_0(y)}\Sigma_0(x)$. The remainder of the proof is straight-forward. 
\end{proof}
Now the following lemma makes it possible to require uniqueness up to homotopy of the morphism in Lemma \ref{lemma_free_crossed_complex}  (compare \cite[Proposition 4]{hueb80-1}).

\begin{lemma}\label{up_to_homotopy}
If $\underline{e}$ is a free crossed complex of $\mathfrak{q}$, $\underline{e}'$ is a crossed resolution of $\mathfrak{q}'$, and $\alpha$ and $\beta$ are two morphisms $\underline{e} \rightarrow \underline{e}'$ extending $\psi:\mathfrak{q} \rightarrow \mathfrak{q}'$, then $\alpha$ and $\beta$ are homotopic. 
\end{lemma}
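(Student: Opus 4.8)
The plan is to build the homotopy $\Sigma=(\dots,\Sigma_2,\Sigma_1,\Sigma_0)$ one degree at a time, from $\Sigma_0$ upwards, exactly as in the classical comparison theorem for projective resolutions. At each stage the defining equation for $\Sigma_i$ is a lifting problem of the shape $\delta'_{i+1}\circ\Sigma_i=(\alpha_i-\beta_i)-\Sigma_{i-1}\circ\delta_i$, and freeness of $\underline{e}$ in degree $i$ together with exactness of the resolution $\underline{e}'$ in the adjacent degree makes it solvable. From degree $2$ onwards this is the usual free-module argument and will be routine; all the genuine work sits in degrees $0$ and $1$, where the non-abelian crossed-module structure forces the twisted relations in the definition of homotopy.

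In degree $0$, let $X$ be a free generating set of $\Lg$. For $x\in X$ we have $\pi'(\alpha_0-\beta_0)(x)=\psi\pi(x)-\psi\pi(x)=0$, so $(\alpha_0-\beta_0)(x)\in\Ker\pi'=\Image\delta'_1$ by exactness of $\underline{e}'$; we choose $\sigma(x)\in\Lc_1'$ with $\delta'_1\sigma(x)=(\alpha_0-\beta_0)(x)$. We then form the semidirect product $\Lc_1'\rtimes\Lg'$ (using the crossed-module action of $\Lg'$ on $\Lc_1'$) and, by the universal property of the free Lie algebra $\Lg$, define the Lie homomorphism $\Phi:\Lg\to\Lc_1'\rtimes\Lg'$ by $\Phi(x)=(\sigma(x),\beta_0(x))$ on generators. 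Composing $\Phi$ with the projection onto $\Lg'$ gives a homomorphism agreeing with $\beta_0$ on $X$, hence equal to $\beta_0$, so $\Phi(g)=(\Sigma_0(g),\beta_0(g))$ for a unique linear map $\Sigma_0:\Lg\to\Lc_1'$. Reading off the first coordinate of $\Phi[x,y]=[\Phi(x),\Phi(y)]$ yields precisely $\Sigma_0[x,y]=[\Sigma_0(x),\Sigma_0(y)]+{}^{\beta_0(x)}\Sigma_0(y)-{}^{\beta_0(y)}\Sigma_0(x)$, the equivalent form of the first homotopy relation. Finally, $(c,g)\mapsto\delta'_1(c)+g$ is a Lie homomorphism $\Lc_1'\rtimes\Lg'\to\Lg'$ (a short check using the two crossed-module axioms); composing it with $\Phi$ gives a homomorphism equal to $\alpha_0$ on $X$, hence everywhere, which is exactly $\delta'_1\circ\Sigma_0=\alpha_0-\beta_0$.

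In degree $1$ we put $\theta:=(\alpha_1-\beta_1)-\Sigma_0\circ\delta_1:\Lc_1\to\Lc_1'$. Using $\delta'_1\alpha_1=\alpha_0\delta_1$, $\delta'_1\beta_1=\beta_0\delta_1$ and $\delta'_1\Sigma_0=\alpha_0-\beta_0$ we get $\delta'_1\theta=0$, so $\theta$ has image in $\Ker\delta'_1=\Image\delta'_2$. A direct computation with the crossed-module axioms and the degree-$0$ identities shows moreover that $\theta$ is $\Lq$-equivariant, i.e.\ $\theta({}^{x}c)={}^{\psi\pi(x)}\theta(c)$, and annihilates brackets, so it factors as a $\Lq$-module map $\overline{\theta}:\Lc_1^{\mathrm{ab}}\to\Ker\delta'_1$. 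Since $\delta_1:\Lc_1\to\Lg$ is totally free on a basis $\xi:Y\to\Lg$, its abelianization $\Lc_1^{\mathrm{ab}}$ is the free $\Lq$-module on $Y$ (the Lie analogue of Whitehead's theorem on relation modules, cf.\ \cite{whit49-1}); as $\delta'_2:C_2'\to\Ker\delta'_1$ is a surjective $\Lq$-module map, we may lift $\overline{\theta}$ through $\delta'_2$ and precompose with $\Lc_1\to\Lc_1^{\mathrm{ab}}$ to obtain $\Sigma_1:\Lc_1\to C_2'$, which by construction is equivariant, kills brackets and satisfies $\delta'_2\Sigma_1+\Sigma_0\delta_1=\alpha_1-\beta_1$. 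Alternatively, one avoids the relation-module theorem by applying the universal property of the free crossed module $\delta_1$ to the crossed module $\Lc_1'\oplus C_2'\to\Lg'$ (with $C_2'$ a central abelian ideal acted on through $\Lq'$), prescribing $y\mapsto(\alpha_1(y),b(y))$ with $\delta'_2 b(y)=\theta(y)$ on each $y\in Y$; the resulting crossed-module morphism over $\alpha_0$ has the form $(\alpha_1,\Sigma_1)$, and uniqueness in the universal property upgrades $\delta'_2\Sigma_1=\theta$ from $Y$ to all of $\Lc_1$.

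In degree $\geq 2$ the argument is classical: assuming $\Sigma_{i-1}$ built, we set $\theta_i:=(\alpha_i-\beta_i)-\Sigma_{i-1}\delta_i$, observe via exactness of $\underline{e}'$ that $\theta_i$ lands in $\Image\delta'_{i+1}=\Ker\delta'_i$ and is a $\Lq$-module map, and use freeness of the $\Lq$-module $C_i$ to lift it through the surjection $\delta'_{i+1}$, defining $\Sigma_i$ with $\delta'_{i+1}\Sigma_i+\Sigma_{i-1}\delta_i=\alpha_i-\beta_i$. The sequence $\Sigma=(\dots,\Sigma_1,\Sigma_0)$ is then a homotopy from $\alpha$ to $\beta$. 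I expect the main obstacle to be the degree-$1$ step: unlike the higher degrees, the target relation must there be verified globally on the non-abelian $\Lc_1$ rather than merely on generators, and this is exactly what the freeness of the crossed module — through the structure of $\Lc_1^{\mathrm{ab}}$, or equivalently the uniqueness clause of its universal property — is needed to supply.
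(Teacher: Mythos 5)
Your proposal is correct and takes essentially the same route as the paper: build $\Sigma$ degree by degree, using the semidirect product $\Lc_1'\rtimes(-)$ and the universal property of the free Lie algebra $\Lg$ in degree $0$, the universal property of the free crossed module $\delta_1$ in degree $1$, and the classical comparison argument in degrees $\geq 2$ (your observation that $(c,g)\mapsto\delta_1'(c)+g$ is a Lie homomorphism is a slightly cleaner way to get $\delta_1'\circ\Sigma_0=\alpha_0-\beta_0$ than the paper's induction on brackets). The one caveat is that your primary degree-$1$ route invokes, without proof, the Lie-algebra analogue of Whitehead's theorem that $\Lc_1^{\mathrm{ab}}$ is a free $U\Lq$-module on the basis — a nontrivial fact the paper neither uses nor establishes — so you should rely instead on your alternative argument via the universal property of the free crossed module (the paper's version of this uses the auxiliary crossed module $0:C_2'\rightarrow\Lq'$ together with its Lemma on free crossed modules, and then checks $\delta_2'\circ\Sigma_1=\theta$ globally by noting that both sides are equivariant, agree on the basis, and vanish on brackets).
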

\begin{proof}
Let $\underline{e}$ be a free crossed complex and $\underline{e}'$ a crossed resolution, of the same form as before. Take a generating set $X$ for the free Lie algebra $\mathfrak{g}$. Since $\pi' \circ \alpha_0=\psi \circ \pi = \pi' \circ \beta_0$, we can choose for every $x \in X$ an element $\Sigma(x) \in \mathfrak{c}_1'$ such that $\delta_1' \circ \Sigma(x)=\alpha_0(x)-\beta_0(x)$. 
If we give $\mathfrak{c}_1'$ a $\mathfrak{g}$-module structure through $\beta_0$, we can define a set map $X \rightarrow \mathfrak{c}'_1 \rtimes \Lg$, mapping $x$ to $(\Sigma(x),x)$. 
This gives rise to a Lie algebra morphism $\Lg \rightarrow \Lc_1' \rtimes \Lg$, and we define $\Sigma_0$ as the first component of this map. One can easily check that $\Sigma_0[x,y]=[\Sigma_0(x),\Sigma_0(y)]+^{\beta_0(x)}\Sigma_0(y)-{}^{\beta_0(y)}\Sigma_0(x)$. To show that $\delta_1' \circ \Sigma_0 = \alpha_0 - \beta_0$, one proceeds by induction on the brackets.
 
Since $\Lc_1$ is the free crossed module on a set $R \subseteq \Lg$, we can construct a map $\Sigma_1 : \Lc_1 \rightarrow C_2'$ as follows. It is clear that we can choose a map $f: R \rightarrow C_2'$, such that for every $r \in R$, $\delta_2' \circ f(r)=\alpha_1(r) - \beta_1 (r)- \Sigma_0 \circ \delta_1(r)$. 
Now, observe that $\alpha_1- \beta_1 - \Sigma_0 \circ \delta_1$ is trivial on brackets, which follows from the fact that $\Image \delta_2' \subseteq Z(\Lc_1')$,  and that the map takes an element ${}^{x}y$ to ${}^{\alpha_0(x)}(\alpha_1(y)-\beta_1(y) - \Sigma_0 \circ \delta_1(y))$. Take the crossed module $0: C_2' \rightarrow \Lq'$, $C_2'$ being an abelian Lie algebra. If we look at the Lie algebra map $\psi \circ \pi : \Lg \rightarrow \Lq'$, Lemma \ref{lemma_free_crossed_module} shows that $f$ extends to a map $\Sigma_1 : \Lc_1 \rightarrow C_2'$, such that $(\Sigma_1,\psi \circ \pi)$ is a morphism of crossed modules. This means in particular that $\Sigma_1[x,y]=0$ and $\Sigma_1({}^{x}y)={}^{\psi \pi (x)}\Sigma_1(y)$. 
Let $\Ll$ and $L$ be as in the construction of a free crossed module. Since $\delta_2 ' \circ \Sigma_1$ and $\alpha_1 - \beta_1- \Sigma_0 \circ \delta_1$ are both linear maps that behave similarly with respect to the action and coincide on $R$, the equality $\delta_2 ' \circ \Sigma_1 + \Sigma_0 \circ \delta_1 = \alpha_1 - \beta_1$ holds for elements in the image of $L \rightarrow \mathfrak{l} \rightarrow \Lc_1$. On the other hand, both $\delta_2 ' \circ \Sigma_1$ and $\alpha_1 - \beta_1- \Sigma_0 \circ \delta_1$ are zero on the brackets, so the equality $\delta_2 ' \circ \Sigma_1 + \Sigma_0 \circ \delta_1 = \alpha_1 - \beta_1$ holds for elements in the image of $\mathfrak{l} \rightarrow \Lc_1$. This map is surjective, so we conclude that $\delta_2 ' \circ \Sigma_1 + \Sigma_0 \circ \delta_1= \alpha_1 - \beta_1$.

The construction of $\Sigma_i$ for $i \geq 2$ is classical.
\end{proof}

Of course, one can define a similar homotopy relation for morphisms $(\alpha_n, \cdots, \alpha_0, \psi)$ and $(\beta_n, \cdots, \beta_0,\psi)$ of crossed $n$-fold extensions, by taking $\Sigma_i=0$ for $i \geq n$. 
Observe that here, $\Sigma_{n-1} \circ \delta_n = \alpha_n - \beta_n$. In this case, the previous lemma becomes as follows.

\begin{lemma}\label{lemma_uniqueness_homotopy}
If $\underline{e}$ is a free crossed $n$-fold extension of $\mathfrak{q}$, $\underline{e}'$ is a crossed $n$-fold extension of $\mathfrak{q}'$, and $\alpha$ and $\beta$ are two morphisms $\underline{e} \rightarrow \underline{e}'$ extending $\psi:\mathfrak{q} \rightarrow \mathfrak{q}'$, then $\alpha$ and $\beta$ are homotopic. 
\end{lemma}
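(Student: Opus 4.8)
The plan is to reuse, essentially verbatim, the term-by-term construction from the proof of Lemma~\ref{up_to_homotopy}, to set $\Sigma_i=0$ for $i\ge n$ as prescribed for $n$-fold extensions, and then to check by hand the single extra boundary relation that appears at the top degree. I would write
\[\underline{e}:\ 0 \to M \xrightarrow{\,i\,} C_{n-1} \xrightarrow{\delta_{n-1}} \cdots \xrightarrow{\delta_2} \mathfrak{c}_1 \xrightarrow{\delta_1} \mathfrak{g} \xrightarrow{\pi} \mathfrak{q} \to 0\]
and denote the corresponding data of $\underline{e}'$ by primes, with kernel $M'$ and inclusion $i'=\delta_n'\colon M'\hookrightarrow C_{n-1}'$.

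Because $\underline{e}$ is a \emph{free} crossed $n$-fold extension, $\mathfrak{g}$ is a free Lie algebra, $\delta_1$ is a free crossed module, and $C_2,\dots,C_{n-1}$ are free $\mathfrak{q}$-modules. These are exactly the freeness ingredients that the proof of Lemma~\ref{up_to_homotopy} uses to build $\Sigma_0$ (via the universal property of the free Lie algebra $\mathfrak{g}$), $\Sigma_1$ (via Lemma~\ref{lemma_free_crossed_module} applied to the free crossed module $\delta_1$), and $\Sigma_2,\dots,\Sigma_{n-1}$ (via freeness of the modules $C_i$); at every stage the required lift exists because $\underline{e}'$ is exact. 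I would run that construction unchanged, obtaining maps $\Sigma_0,\dots,\Sigma_{n-1}$ satisfying $\delta_{i+1}'\circ\Sigma_i+\Sigma_{i-1}\circ\delta_i=\alpha_i-\beta_i$ for $1\le i\le n-1$, together with the two special relations for $\Sigma_0$ and $\Sigma_1$. The only point needing a word is that the top lift $\Sigma_{n-1}$ takes values in $M'$ rather than merely in $C_{n-1}'$: the obstruction $\alpha_{n-1}-\beta_{n-1}-\Sigma_{n-2}\circ\delta_{n-1}$ lies in $\ker\delta_{n-1}'=\operatorname{Im} i'$, as one sees by composing with $\delta_{n-1}'$ and using $\delta_{n-1}'\alpha_{n-1}=\alpha_{n-2}\delta_{n-1}$ (and likewise for $\beta$), the homotopy relation one step below, and $\delta_{n-2}\delta_{n-1}=0$; hence the lift factors through the injection $i'$ and defines a $\mathfrak{q}$-module map $\Sigma_{n-1}\colon C_{n-1}\to M'$.

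Finally I would put $\Sigma_i=0$ for $i\ge n$ and verify the remaining identity $\Sigma_{n-1}\circ i=\alpha_n-\beta_n$, which is precisely the boundary relation at level $n$ once $\Sigma_n=0$. Since $i'$ is injective it suffices to compare after applying $i'$: on one side $i'\circ\Sigma_{n-1}\circ i=(\alpha_{n-1}-\beta_{n-1}-\Sigma_{n-2}\circ\delta_{n-1})\circ i=(\alpha_{n-1}-\beta_{n-1})\circ i$, because $\delta_{n-1}\circ i=0$ by exactness of $\underline{e}$ at $C_{n-1}$; on the other side the commutativity of the leftmost square of the morphisms gives $(\alpha_{n-1}-\beta_{n-1})\circ i=i'\circ(\alpha_n-\beta_n)$. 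Thus $\Sigma_{n-1}\circ i=\alpha_n-\beta_n$, and $\Sigma=(\dots,0,\Sigma_{n-1},\dots,\Sigma_0)$ is the desired homotopy. I expect the main (indeed essentially the only) obstacle to be this top-degree bookkeeping—arranging that the degree-$(n-1)$ lift lands in the kernel module $M'$ and confirming the extra boundary identity via injectivity of $i'$—since everything in lower degrees is identical to the proof of Lemma~\ref{up_to_homotopy}, and the case $n=2$ actually needed is the clean specialisation of this argument.
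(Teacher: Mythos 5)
Your argument is correct and follows the same route the paper takes: Lemma~\ref{lemma_uniqueness_homotopy} is obtained from the construction in the proof of Lemma~\ref{up_to_homotopy} by setting $\Sigma_i=0$ for $i\geq n$, and your top-degree bookkeeping (the lift $\Sigma_{n-1}$ landing in $M'$ via $\ker\delta_{n-1}'=\Image i'$, and the identity $\Sigma_{n-1}\circ i=\alpha_n-\beta_n$ checked after composing with the injective map $i'$) is exactly the detail the paper leaves implicit in the remark preceding the lemma.
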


With this notion of homotopic maps, one can introduce homotopy equivalence between crossed complexes (crossed $n$-fold extensions): two crossed complexes (crossed $n$-fold extensions) $\underline{e}$ and $\underline{e}'$ are \emph{homotopy equivalent} if there exist morphisms $\alpha: \underline{e} \rightarrow \underline{e}'$ and $\beta : \underline{e}' \rightarrow \underline{e}$ such that $\alpha \circ \beta$ is homotopic to $\mathbb{1}_{\underline{e}'}$ and $\beta \circ \alpha$ is homotopic to $\mathbb{1}_{\underline{e}}$. The next lemma follows readily from the above.

\begin{lemma}
Any two free crossed resolutions (free crossed $n$-fold extensions) of $\mathfrak{q}$ are homotopy equivalent. 
\end{lemma}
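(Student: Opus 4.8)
The plan is to reproduce the classical comparison argument for projective resolutions, now phrased in the crossed-complex language developed above. The central observation is that a free crossed resolution of $\mathfrak{q}$ plays a double role: it is at once a free crossed complex (hence admissible as the \emph{source} in Lemmas~\ref{lemma_free_crossed_complex} and~\ref{up_to_homotopy}) and an exact crossed complex, i.e.\ a crossed resolution (hence admissible as the \emph{target} in those same lemmas). The analogous double role holds for free crossed $n$-fold extensions, with respect to the existence statement in the remark following Lemma~\ref{lemma_free_crossed_complex} and to Lemma~\ref{lemma_uniqueness_homotopy}.

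So let $\underline{e}$ and $\underline{e}'$ be two free crossed resolutions of $\mathfrak{q}$. First I would apply Lemma~\ref{lemma_free_crossed_complex} to the identity map $\mathbb{1}_{\mathfrak{q}}$ twice, in opposite directions: taking $\underline{e}$ as the free source and $\underline{e}'$ as the resolution target produces a morphism $\alpha : \underline{e} \rightarrow \underline{e}'$ lifting $\mathbb{1}_{\mathfrak{q}}$, and exchanging the roles of $\underline{e}$ and $\underline{e}'$ produces a morphism $\beta : \underline{e}' \rightarrow \underline{e}$ lifting $\mathbb{1}_{\mathfrak{q}}$.

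Next, since morphisms of crossed complexes compose levelwise and composites of crossed-module (resp.\ $\mathfrak{q}$-module) morphisms are again of the same type, both $\beta \circ \alpha : \underline{e} \rightarrow \underline{e}$ and $\mathbb{1}_{\underline{e}}$ are morphisms of crossed complexes extending $\mathbb{1}_{\mathfrak{q}}$. Invoking Lemma~\ref{up_to_homotopy} with source $\underline{e}$ (free) and target $\underline{e}$ (a resolution) shows that $\beta \circ \alpha$ is homotopic to $\mathbb{1}_{\underline{e}}$. Symmetrically, $\alpha \circ \beta$ is homotopic to $\mathbb{1}_{\underline{e}'}$. By definition this says precisely that $\underline{e}$ and $\underline{e}'$ are homotopy equivalent.

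For two free crossed $n$-fold extensions the argument is word for word the same, replacing Lemma~\ref{lemma_free_crossed_complex} by the existence statement in the remark following it and replacing Lemma~\ref{up_to_homotopy} by Lemma~\ref{lemma_uniqueness_homotopy}. The only point that requires any care — and hence the main (mild) obstacle — is the bookkeeping: confirming that each free crossed resolution (resp.\ $n$-fold extension) genuinely satisfies \emph{both} hypotheses, freeness on the source side and exactness on the target side, of whichever lemma is being applied at each step. Once this is verified the homotopy equivalence is immediate, and no further computation is needed.
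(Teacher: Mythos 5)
Your argument is correct and is exactly the one the paper intends when it says the lemma ``follows readily from the above'': lift $\mathbb{1}_{\mathfrak{q}}$ in both directions via Lemma~\ref{lemma_free_crossed_complex} (resp.\ the remark following it), then compare the composites with the identity morphisms using the uniqueness-up-to-homotopy statements of Lemma~\ref{up_to_homotopy} (resp.\ Lemma~\ref{lemma_uniqueness_homotopy}), noting that a free crossed resolution qualifies simultaneously as free source and exact target. No discrepancy with the paper's approach.
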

\subsection{Trivial crossed extensions}\label{trivial_crossed_modules}
In the previous subsections, we have introduced some of the components we need to prove the main result of this section, Lemma \ref{Huebschmann}, that gives a  characterisation of crossed extensions that are ``trivial'' in some sense. The proof of the theorem is a translation of section 8 and the proof in section 10 of \cite{hueb80-1} to the case of Lie algebras. We write it down here for completeness. 

We can define a relation ``$\sim$'' on the crossed $n$-fold extensions as follows: $\underline{e} \sim \underline{e}'$ iff there is a morphism of crossed $n$-fold extentions $(\mathbb{1}, \alpha_{n-1}, \cdots, \alpha_1, \alpha_0, \mathbb{1}): \underline{e} \rightarrow \underline{e}'$. This relation generates an equivalence relation, and the equivalence classes of crossed $n$-fold extensions of $\mathfrak{q}$ by $M$ are noted as $\Opext^n(\mathfrak{q},M)$.

Now we turn to the case $n=2$. Define
\[\underline{O}:\xymatrix{0 \ar[r] & M \ar@{=}[r] & M \ar[r]^{0} & \mathfrak{q} \ar@{=}[r] & \mathfrak{q} \ar[r] & 0}\]
as the \emph{zero crossed 2-fold extension} of $\mathfrak{q}$ by $M$. Under the well-known bijection $\Opext^2 (\mathfrak{q},M) \cong H^3(\mathfrak{q},M)$, for which an explicit proof can be found in \cite{wage06-1}, the equivalence class of $\underline{O}$ clearly corresponds to the trivial cohomology class.
Lemma \ref{Huebschmann} gives a characterisation of all the crossed 2-fold extensions that are equivalent with $\underline{O}$, or, equivalently, that correspond to the trivial cohomology class under the bijection $\Opext^2 (\mathfrak{q},M) \cong H^3(\mathfrak{q},M)$. We repeat the statement here, renaming the Lie algebras and maps.

\begin{lemma} 
A crossed extension 
\[\underline{e}:\xymatrix{0 \ar[r] & M \ar[r]^{i} & \mathfrak{c} \ar[r]^{\delta} & \mathfrak{g} \ar[r]^{\rho} & \mathfrak{q} \ar[r] & 0}\]
is equivalent to the zero extension if and only if there exists  a short exact sequence \break $\underline{e}': \xymatrix@1{0 \ar[r]& \mathfrak{c} \ar[r] & \mathfrak{e} \ar[r]& \mathfrak{q} \ar[r]& 0}$ of Lie algebras and a Lie algebra homomorphism $h : \mathfrak{e} \rightarrow \mathfrak{g}$ such that the diagram
\[\xymatrix{ \underline{e}': {\phantom{0}} & 0 \ar[r]& \mathfrak{c} \ar@{=}[d]\ar[r]& \mathfrak{e} \ar[d]^{h}\ar[r]& \mathfrak{q} \ar@{=}[d] \ar[r]& 0\\
\underline{e}{\phantom{'}} :  0 \ar[r]& M \ar[r]^{i}& \mathfrak{c} \ar[r]^{\delta} & \mathfrak{g} \ar[r]^{\rho} & \mathfrak{q} \ar[r] & 0}\]
commutes and $(\mathbb{1}_{\mathfrak{c}},h)$ is a homomorphism of crossed modules.
\end{lemma}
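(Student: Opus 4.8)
The plan is to prove the two implications separately, using the free–crossed–resolution machinery of this section as the engine and the bijection $\Opext^2(\mathfrak q,M)\cong H^3(\mathfrak q,M)$ to identify ``equivalent to the zero extension'' with ``$[\underline e]=0$''. First I fix a free crossed resolution $\underline C:\ \cdots\to C_2\xrightarrow{\delta_2}\mathfrak c_1\xrightarrow{\delta_1}\mathfrak g_0\xrightarrow{\pi}\mathfrak q\to 0$ of $\mathfrak q$ and, by Lemma \ref{lemma_free_crossed_complex}, lift $\mathbb{1}_\mathfrak q$ to a morphism $\alpha=(\dots,0,\alpha_2,\alpha_1,\alpha_0)$ from $\underline C$ to $\underline e$, where $\alpha_0:\mathfrak g_0\to\mathfrak g$, $\alpha_1:\mathfrak c_1\to\mathfrak c$ and $\alpha_2:C_2\to M$. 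By Lemma \ref{up_to_homotopy} this lift is unique up to homotopy, so $\alpha_2$ represents the class of $\underline e$; unravelling the homotopy relation (the component $\Sigma_1$ of a homotopy to the trivial lift), the statement $[\underline e]=0$ becomes: there is a $\mathfrak q$-equivariant linear map $\beta:\mathfrak c_1\to M$ vanishing on brackets with $\beta\circ\delta_2=\alpha_2$ (identifying $M$ with its image under $i$). The whole proof then reduces to producing such a $\beta$ from the data $(\underline e',h)$ and conversely.

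For the implication that the data forces triviality I start from $(\underline e',h)$ and first record that $\rho h=p$ and $h|_\mathfrak c=\delta$ force $\Ker h\subseteq\Ker p=\mathfrak c$, hence $\Ker h=\Ker\delta=M$, while surjectivity of $p=\rho h$ together with $\Image\delta\subseteq\Image h$ makes $h$ surjective; thus $0\to M\to\mathfrak e\xrightarrow{h}\mathfrak g\to 0$ is exact. Since $\mathfrak g_0$ is free I lift $\alpha_0$ through $h$ to a Lie algebra map $\widetilde{\alpha}_0:\mathfrak g_0\to\mathfrak e$ with $h\widetilde{\alpha}_0=\alpha_0$, whence $p\widetilde{\alpha}_0=\rho\alpha_0=\pi$. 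Because $\delta_1:\mathfrak c_1\to\mathfrak g_0$ is a free crossed module and $\widetilde{\alpha}_0\delta_1$ lands in $\Ker p=\iota(\mathfrak c)$, Lemma \ref{lemma_free_crossed_module} yields a crossed-module morphism $(\gamma_1,\widetilde{\alpha}_0)\colon(\mathfrak c_1\xrightarrow{\delta_1}\mathfrak g_0)\to(\mathfrak c\xrightarrow{\iota}\mathfrak e)$ with $\iota\gamma_1=\widetilde{\alpha}_0\delta_1$. Setting $\beta:=\alpha_1-\gamma_1$, the chain $\delta\gamma_1=h\iota\gamma_1=h\widetilde{\alpha}_0\delta_1=\alpha_0\delta_1=\delta\alpha_1$ shows $\beta$ takes values in $M$; that $\beta$ kills brackets and is $\mathfrak q$-equivariant follows from the Peiffer identity, from the hypothesis that $(\mathbb{1}_\mathfrak c,h)$ is a crossed-module morphism (so the $\mathfrak e$-action on $\mathfrak c$ agrees with the $\mathfrak g$-action through $h$), and from the fact that the $\mathfrak g$-action on $M$ factors through $\rho$. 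Finally $\iota\gamma_1\delta_2=\widetilde{\alpha}_0\delta_1\delta_2=0$ gives $\gamma_1\delta_2=0$, so $\beta\delta_2=\alpha_1\delta_2=i\alpha_2$, i.e.\ $[\underline e]=0$.

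For the converse I assume $[\underline e]=0$ and take $\beta:\mathfrak c_1\to M$ as above. Put $\gamma_1:=\alpha_1-i\beta:\mathfrak c_1\to\mathfrak c$; one checks $(\gamma_1,\alpha_0)$ is again a crossed-module morphism into $(\mathfrak c\xrightarrow{\delta}\mathfrak g)$ and that $\gamma_1\delta_2=i\alpha_2-i\beta\delta_2=0$. Hence $\gamma_1$ vanishes on $\Image\delta_2=\Ker\delta_1$ and factors through $\mathfrak n:=\Image\delta_1=\Ker\pi$ as a $\mathfrak g_0$-equivariant map $\bar\gamma_1:\mathfrak n\to\mathfrak c$ with $\delta\bar\gamma_1=\alpha_0|_{\mathfrak n}$. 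Letting $\mathfrak g_0$ act on $\mathfrak c$ through $\alpha_0$, I form $\mathfrak c\rtimes\mathfrak g_0$ and the subspace $S=\{(\bar\gamma_1(n),-n):n\in\mathfrak n\}$; using $\delta\bar\gamma_1=\alpha_0|_{\mathfrak n}$, the Peiffer identity and equivariance of $\bar\gamma_1$, one verifies that $S$ is an ideal, exactly as in the push-out construction of Section \ref{pb_po}. I set $\mathfrak e:=(\mathfrak c\rtimes\mathfrak g_0)/S$, with $\iota(c)=(c,0)+S$, $p((c,g)+S)=\pi(g)$ and $h((c,g)+S)=\delta(c)+\alpha_0(g)$ (well defined since $h$ kills $S$). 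A direct computation gives exactness of $0\to\mathfrak c\xrightarrow{\iota}\mathfrak e\xrightarrow{p}\mathfrak q\to 0$, the identities $h\iota=\delta$ and $\rho h=p$, and that $(\mathbb{1}_\mathfrak c,h)$ is a morphism of crossed modules, so $(\underline e',h)$ is the required data.

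The main obstacle is the bookkeeping around the cochain $\beta$: making precise, via the bijection $\Opext^2\cong H^3$ and Lemma \ref{up_to_homotopy}/Lemma \ref{lemma_uniqueness_homotopy}, that triviality of $\underline e$ is equivalent to the existence of an equivariant, bracket-annihilating $\beta$ with $\beta\delta_2=\alpha_2$, and then checking that the maps $\gamma_1$ produced in each direction genuinely respect the actions. In the Lie setting these actions are by derivations, so every verification (that $\beta$ is equivariant and kills brackets, that $S$ is an ideal, that $(\mathbb{1}_\mathfrak c,h)$ is a crossed-module morphism) rests on careful use of the Peiffer identity and of the fact that the relevant module actions factor through $\rho$ or through $h$; this is precisely where Huebschmann's group-case argument must be adapted rather than merely transcribed. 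Everything else—the lift of $\alpha_0$ through $h$, the application of Lemma \ref{lemma_free_crossed_module}, and the push-out—is routine once these compatibilities are in hand.
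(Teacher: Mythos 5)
Your architecture is close to the paper's: both proofs run through a free crossed resolution of $\mathfrak{q}$, a lift of $\mathbb{1}_{\mathfrak{q}}$ that is unique up to homotopy, an intermediate cochain ($\beta$ in your notation, $\widetilde{\nu}$ in the paper's) measuring triviality, and the co-equalizer/push-out $(\mathfrak{c}\rtimes\mathfrak{f})/S$ to manufacture $(\underline{e}',h)$. Your two conversion steps --- extracting $\beta$ from $(\underline{e}',h)$ by lifting $\alpha_0$ through the surjection $h$ and invoking Lemma \ref{lemma_free_crossed_module}, and conversely building $\mathfrak{e}$ from $\beta$ --- are correct and essentially reproduce the paper's constructions, with the equivariance and bracket-annihilation checks resting on the Peiffer identity and on the action on $M$ factoring through $\mathfrak{q}$, exactly as they should.

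The gap is the pivot you yourself flag as ``the main obstacle'': the equivalence between $[\underline{e}]=0$ and the existence of a $\mathfrak{q}$-equivariant, bracket-annihilating $\beta$ with $\beta\circ\delta_2=\alpha_2$. You present this as ``unravelling the homotopy relation (the component $\Sigma_1$ of a homotopy to the trivial lift)'', but that phrase is not well-formed: a lift $\underline{C}\to\underline{e}$ and a lift $\underline{C}\to\underline{O}$ have different targets, so Lemma \ref{up_to_homotopy} cannot be applied to the pair. Since equivalence in $\Opext^2(\mathfrak{q},M)$ is only the equivalence relation \emph{generated} by the existence of morphisms, one must propagate the extendability of $\nu=\alpha_2$ along an arbitrary zigzag $\underline{O}=\underline{e}_0,\underline{e}_1,\dots,\underline{e}_n=\underline{e}$, composing the comparison morphism with each zigzag map in whichever direction it points and correcting by the resulting homotopy term $\Sigma_1$; this induction is the bulk of the paper's proof and is absent from yours. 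Moreover you route the easy implication (the diagram exists $\Rightarrow$ $\underline{e}$ is trivial) through the converse direction $\exists\beta\Rightarrow[\underline{e}]=0$, which is \emph{not} a formal consequence of the bijection $\Opext^2(\mathfrak{q},M)\cong H^3(\mathfrak{q},M)$ but amounts to the (unproved here) theorem that a free crossed resolution computes $H^3$; as written, deducing it from your own lemma would be circular. The paper instead proves that implication directly: a linear section of $\underline{e}'$ composed with $h$ gives a section of $\rho$ whose defect lifts coherently to $\mathfrak{c}$, so the associated $3$-cocycle is a coboundary. You should either supply the zigzag induction for one direction and this direct cocycle computation for the other, or first establish the resolution-computes-cohomology statement you are implicitly using.
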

\begin{proof}
If such a diagram exists, it is not difficult to see that the extension is equivalent to the zero extension, for example by constructing the associated 3-cocycle.

Now suppose that $\underline{e}$ is equivalent to the zero crossed extension $\underline{O}$.
Take a free crossed 2-fold extension $\underline{C}^2$ of $\mathfrak{q}$. We know that there exists a morphism $(\nu,\beta_1,\beta_0,\mathbb{1}_{\Lq})$,
\[\xymatrix{\underline{C}^2:  0 \ar[r] & J_2 \ar[d]^{\nu} \ar[r]^{j} & \mathfrak{c}_1 \ar[r] \ar[d]^{\beta_1}& \mathfrak{f} \ar[r]^{\pi} \ar[d]^{\beta_0} & \mathfrak{q} \ar@{=}[d]\ar[r] & 0\\
\underline{e} :  0 \ar[r] & M \ar[r]^{i} & \mathfrak{c} \ar[r] & \mathfrak{g} \ar[r]^-{\rho} & \mathfrak{q} \ar[r] & 0,}\] that is unique up to homotopy. We claim that
we can extend $\nu$ to an $\mathfrak{f}$-module morphism $\widetilde{\nu}: \mathfrak{c}_1 \rightarrow M$, where the $\mathfrak{f}$-module structure on $M$ is given through $\pi: \mathfrak{f} \rightarrow \mathfrak{q}$.
Observe that there exists a sequence of crossed extensions $\underline{e}_i$, $0 \leq i \leq n$, with $\underline{e}_0=\underline{O}$, $\underline{e}_n=\underline{e}$, and morphisms of crossed extensions $\underline{e}_0 \rightarrow \underline{e}_{1}$,  $\underline{e}_2 \rightarrow \underline{e}_{1}$,  $\underline{e}_2 \rightarrow \underline{e}_{3}$, etc. (If needed, one can take $\underline{e}_1=\underline{e}_0$.) 
It is clear that, if $\underline{e}=\underline{O}$, we can take $\widetilde{\nu}=\beta_1 : \mathfrak{c}_1 \rightarrow M$, since $\beta_0$ coincides with $\pi$. 
Furthermore, suppose that there exists a diagram with commuting squares
\[\xymatrix{\underline{C}^2:  0 \ar[r] & J_2 \ar[d]^{\mu} \ar[r]^{j} & \mathfrak{c}_1 \ar[dl]^{\widetilde{\mu}}\ar[r] \ar[d]^{\beta_1'}& \mathfrak{f} \ar[r]^{\pi} \ar[d]^{\beta_0'} & \mathfrak{q} \ar@{=}[d]\ar[r] & 0\\
\underline{e}':  0 \ar[r] & M \ar@{=}[d] \ar[r] & \mathfrak{c}' \ar[d]^{\alpha_1} \ar[r] & \mathfrak{g}' \ar[d]^{\alpha_0} \ar[r] & \mathfrak{q} \ar@{=}[d]\ar[r] & 0\\
 \underline{e} : 0 \ar[r] & M \ar[r] & \mathfrak{c} \ar[r] & \mathfrak{g} \ar[r] & \mathfrak{q} \ar[r] & 0,}\] where $\widetilde{\mu}$ is an $\mathfrak{f}$-module morphism that extends $\mu$. It follows from Lemma \ref{lemma_uniqueness_homotopy} that the morphisms $(\nu, \beta_1, \beta_0, \mathbb{1}_{\Lq})$ and $(\mu, \alpha_1 \circ \beta_1', \alpha_0 \circ \beta_0', \mathbb{1}_{\Lq})$  are homotopic, say through a homotopy $(\Sigma_1, \Sigma_0)$. Since $\nu=\mu+(\Sigma_1 \circ j)$, we can take $\widetilde{\nu}=\widetilde{\mu}+\Sigma_1$. It is easy to see that indeed $\widetilde{\nu} \circ j = \nu$ and that $\widetilde{\nu}$ preserves the $\mathfrak{f}$-action.
On the other hand, if there is a diagram with commuting squares
\[\xymatrix{\underline{C}^2:  0 \ar[r] & J_2 \ar[d]^{\mu} \ar[r]^-{j} & \mathfrak{c}_1 \ar[dl]^{\widetilde{\mu}}\ar[r] \ar[d]^{\beta_1'}& \mathfrak{f} \ar[r]^{\pi} \ar[d]^{\beta_0'} & \mathfrak{q} \ar@{=}[d]\ar[r] & 0\\
\underline{e}':  0 \ar[r] & M  \ar[r] & \mathfrak{c}'  \ar[r] & \mathfrak{g}'  \ar[r] & \mathfrak{q} \ar[r] & 0\\
 \underline{e} : 0 \ar[r] & M \ar@{=}[u] \ar[r] & \mathfrak{c} \ar[u]_{\alpha_1} \ar[r] & \mathfrak{g} \ar[u]_{\alpha_0} \ar[r] & \mathfrak{q} \ar@{=}[u]\ar[r] & 0,}\]
where $\widetilde{\mu}$ is again an $\mathfrak{f}$-module morphism that extends $\mu$, the morphisms $(\nu, \alpha_1 \circ \beta_1, \alpha_0 \circ \beta_0, \mathbb{1}_{\Lq})$ and $(\mu, \beta_1', \beta_0',\mathbb{1}_{\Lq})$ are homotopic, say through $(\Sigma_1, \Sigma_0)$. This means that again, $\nu=\mu + (\Sigma_1 \circ j)$ and we can take $\widetilde{\nu}=\widetilde{\mu}+\Sigma_1$ as before. By induction, these two cases prove the claim.

Define $\widetilde{\beta}_1(x)=\beta_1(x)-i \circ \widetilde{\nu}$. It is straight-forward that $\widetilde{\beta}_1(j(J_2))=0$, so it induces a map $\beta: \mathfrak{n}=\Ker \pi \rightarrow \mathfrak{c}$. Using the properties of crossed modules, one can see that $\widetilde{\nu}[x,y]=0$ for all $x, \, y \in \mathfrak{c}$ and that $i(M)$ is central in $\mathfrak{c}$. 
It follows that $\beta$ is a morphism of Lie algebras and $(\beta, \beta_0) : (\mathfrak{n}, \mathfrak{f},\iota) \rightarrow (\mathfrak{c}, \mathfrak{g}, \delta)$ is a morphism of the crossed modules, where $\iota$ is the inclusion $\mathfrak{n} \rightarrow \mathfrak{f}$. 
We take the co-equalizer $\mathfrak{e}$ of the maps $\beta \rtimes 0 : \mathfrak{n} \rightarrow \mathfrak{c} \rtimes \mathfrak{f}$ and $0 \rtimes \iota : \mathfrak{n} \rightarrow \mathfrak{c} \rtimes \mathfrak{f}$, where the $\mathfrak{f}$-action on $\mathfrak{c}$ is defined via $\beta_0$. 
Explicitly, $\mathfrak{e}$ is the quotient of $\mathfrak{c} \rtimes \mathfrak{f}$ by the ideal consisting of all $(\beta(x),-\iota(x))$ with $x \in \mathfrak{n}$. 
By the universal property of the co-equalizer (or by checking explicitly), the map $\mathfrak{c} \rtimes \mathfrak{f} \rightarrow \mathfrak{q}$, $(x,y) \mapsto \pi(x)$, induces a surjective map $\overline{\pi}: \mathfrak{e} \rightarrow \mathfrak{q}$.
This map yields an exact sequence $\xymatrix@1{0 \ar[r]& \mathfrak{c} \ar[r]^-{\overline{i}} & \mathfrak{e} \ar[r]^{\overline{\pi}} & \mathfrak{q} \ar[r] & 1}$, where $\overline{i}$ is the composition $\mathfrak{c} \hookrightarrow \mathfrak{c} \rtimes \mathfrak{f} \twoheadrightarrow \mathfrak{e}$. 
One can check that there is a well-defined Lie algebra homomorphism $\alpha: \mathfrak{e} \rightarrow \mathfrak{g}$, 
induced by $\mathfrak{c} \rtimes \mathfrak{f} \rightarrow \mathfrak{g}$, $(x,y) \mapsto \delta(x)+\beta_0(y)$, 
making $(\mathbb{1}_{\Lc},\alpha):(\mathfrak{c} , \mathfrak{e} , \overline{i}) \rightarrow  (\mathfrak{c},\mathfrak{g},\alpha)$ into a morphism of crossed modules. 
Furthermore, the diagram 
\[\xymatrix{ & 0 \ar[r] & \mathfrak{c} \ar@{=}[d]\ar[r]^{\overline{i}} & \mathfrak{e} \ar[d]^-{\alpha}\ar[r]^{\overline{\pi}} & \mathfrak{q} \ar@{=}[d]\ar[r] & 0\\
0 \ar[r] & M \ar[r] & \mathfrak{c} \ar[r]^{\delta} & \mathfrak{g} \ar[r] & \mathfrak{q} \ar[r] & 0}\] commutes, which finishes the proof. 
\end{proof}

\bibliography{G:/algebra/ref}

\begin{thebibliography}{10}

\bibitem{baht87-1}
Bahturin, Y.~A.
\newblock {\em Identical relations in {L}ie algebras}.
\newblock VNU Science Press b.v., Utrecht, 1987.
\newblock Translated from the Russian by Bakhturin.

\bibitem{cw81-1}
Cheng, C.~C. and Wu, Y.~C.
\newblock {\em An eight-term exact sequence associated with a group extension}.
\newblock Michigan Math. J., 1981, 28 3, 323--340.

\bibitem{dhw11-1}
Dekimpe, K., Hartl, M., and Wauters, S.
\newblock A seven-term exact sequence for the cohomology of a group extension.
\newblock To appear in Journal of Algebra.

\bibitem{elli93-1}
Ellis, G.~J.
\newblock {\em Homotopical aspects of {L}ie algebras}.
\newblock J. Austral. Math. Soc. Ser. A, 1993, 54 3, 393--419.

\bibitem{gers64-1}
Gerstenhaber, M.
\newblock {\em A uniform cohomology theory for algebras}.
\newblock Proc. Nat. Acad. Sci. U.S.A., 1964, 51 626--629.

\bibitem{gers66-1}
Gerstenhaber, M.
\newblock {\em On the deformation of rings and algebras. {II}}.
\newblock Ann. of Math., 1966, 84 1--19.

\bibitem{hs53-2}
Hochschild, G. and Serre, J.-P.
\newblock {\em Cohomology of {L}ie algebras}.
\newblock Ann. of Math. (2), 1953, 57 591--603.

\bibitem{hueb80-1}
Huebschmann, J.
\newblock {\em Crossed {$n$}-fold extensions of groups and cohomology}.
\newblock Comment. Math. Helv., 1980, 55 2, 302--313.

\bibitem{kl82-1}
Kassel, C. and Loday, J.-L.
\newblock {\em Extensions centrales d'alg\`ebres de {L}ie}.
\newblock Ann. Inst. Fourier (Grenoble), 1982, 32 4, 119--142 (1983).

\bibitem{loda78-1}
Loday, J.-L.
\newblock {\em Cohomologie et groupe de {S}teinberg relatifs}.
\newblock J. Algebra, 1978, 54 1, 178--202.

\bibitem{macl71-1}
MacLane, S.
\newblock {\em Categories for the working mathematician}.
\newblock Springer-Verlag, New York, 1971.
\newblock Graduate Texts in Mathematics, Vol. 5.

\bibitem{ratc80-1}
Ratcliffe, J.~G.
\newblock {\em Crossed extensions}.
\newblock Trans. Amer. Math. Soc., 1980, 257 1, 73--89.

\bibitem{sant83-1}
Santharoubane, L.~J.
\newblock {\em Cohomology of {H}eisenberg {L}ie algebras}.
\newblock Proc. Amer. Math. Soc., 1983, 87 1, 23--28.

\bibitem{wage06-1}
Wagemann, F.
\newblock {\em On {L}ie algebra crossed modules}.
\newblock Comm. Algebra, 2006, 34 5, 1699--1722.

\bibitem{weib94-1}
Weibel, C.~A.
\newblock {\em An introduction to homological algebra}, volume~38 of {\em
  Cambridge Studies in Advanced Mathematics}.
\newblock Cambridge University Press, Cambridge, 1994.

\bibitem{whit49-1}
Whitehead, J. H.~C.
\newblock {\em Combinatorial homotopy. {II}}.
\newblock Bull. Amer. Math. Soc., 1949, 55 453--496.

\end{thebibliography}
\bibliographystyle{G:/algebra/ref}

\end{document}